\documentclass[11pt,a4paper,reqno]{article}

\usepackage[a4paper,includeheadfoot, total={6in,10in}]{geometry}
\usepackage[affil-it]{authblk}
\usepackage[dvipsnames]{xcolor}
\usepackage[numbers,sort]{natbib}
\usepackage[sc]{mathpazo}
\usepackage[T1]{fontenc}
\usepackage{eulervm}
\usepackage[utf8]{inputenc}
\usepackage{amsfonts,amsmath,amssymb,amsthm,
bbm,chngcntr,enumerate,graphicx,mathtools,tikz, relsize}

\usepackage[linktocpage=true]{hyperref}

\usepackage{subcaption} 
\usetikzlibrary{positioning}
\usetikzlibrary{decorations.text, decorations.markings, mindmap,trees,shadows, decorations.pathreplacing}
\tikzstyle{doublearr}=[latex-latex,red, line width=0.5pt]
\tikzstyle{doublearr2}=[latex-latex,green!80!black, line width=0.5pt]

\tikzstyle{mybox} = [draw=black, thick, minimum height=.4cm,
    rectangle]
\tikzstyle{fancytitle} =[fill=blue, text=white]
\usetikzlibrary{mindmap,trees,shadows}

\usepackage[titletoc,page,header]{appendix}



\linespread{1.05}

\def\sss{ }

\newcommand{\cmt}[1]{[\texttt{{\color{red}#1}}]}

\newcommand{\JSQ}{\mathrm{JSQ}}
\newcommand{\Er}{\mathrm{Er}}
\newcommand{\hEr}{\hat{\mathrm{Er}}}

\newcommand{\BS}{\mathbf{S}}
\newcommand{\ff}{\mathbf{f}}
\newcommand{\gb}{\mathbf{g}}
\newcommand{\FF}{\mathbf{F}}

\newcommand{\yy}{\mathbf{y}}

\newcommand{\qq}{\mathbf{q}}

\newcommand{\QQ}{\mathbf{Q}}

\newcommand{\ZZ}{\mathbf{Z}}
\newcommand{\bQ}{\bar{Q}}
\newcommand{\hQ}{\hat{Q}}

\newcommand{\Op}{\mathrm{O}_{\sss P}}
\newcommand{\op}{\mathrm{o}_{\sss P}}

\newcommand{\mmu}{\boldsymbol{\mu}}

\long\def\/*#1*/{}
\newcommand{\e}{\mathrm{e}}
\newcommand\given[1][]{\:#1\middle|\:} 
\newcommand\Pro[1]{\mathbbm{P}\left(#1\right)}  
\newcommand\E[1]{\mathbbm{E}\left(#1\right)}  
\newcommand\norm[1]{\left\|#1\right\|}  
\newcommand{\pto}{\ensuremath{\xrightarrow{\mathbbm{P}}}}  
\newcommand{\dto}{\ensuremath{\xrightarrow{\mathcal{L}}}}  
\newcommand{\dif}{\ensuremath{\mbox{d}}}

\newcommand\ind[1]{\ensuremath{\mathbbm{1}_{\left[#1\right]}}} 
\newcommand{\R}{\mathbbm{R}}                 
\newcommand{\N}{\mathbbm{N}}                 
\newcommand{\Z}{\mathbbm{Z}}				
\newcommand{\bZ}{\bar{\mathbbm{Z}}}				
\makeatletter 
\newcommand{\fixed@sra}{$\vrule height 2\fontdimen22\textfont2 width 0pt\shortrightarrow$}
\newcommand{\sa}[1]{%
  \mathrel{\text{\rotatebox[origin=c]{\numexpr#1*45}{\fixed@sra}}}
}


\newtheorem{theorem}{Theorem}
\newtheorem{corollary}[theorem]{Corollary}
\newtheorem{lemma}[theorem]{Lemma}
\newtheorem{proposition}[theorem]{Proposition}
\newtheorem{remark}[theorem]{Remark}
\newtheorem{definition}{Definition}
\newtheorem*{theorem*}{Theorem}
\newtheorem{assumption}{Assumption}

\numberwithin{equation}{section}
\numberwithin{theorem}{section}

\hypersetup{
 colorlinks,
 linkcolor=red,          
 citecolor=blue,       
 filecolor=red,   
 urlcolor=black, 
 pdftitle={},
 pdfauthor={},
 pdfcreator={Debankur Mukherjee},
 pdfsubject={},
 pdfkeywords={}
}

\begin{document}

\title{Asymptotic Optimality of Power-of-$d$ Load Balancing \\
in Large-Scale Systems}

\author[1]{Debankur Mukherjee\footnote{\texttt{d.mukherjee@tue.nl}}}
\author[1,2]{Sem C.~Borst}
\author[1]{\\ Johan S.H.~van Leeuwaarden}
\author[3]{Philip A.~Whiting}
\affil[1]{
Eindhoven University of Technology,
 The Netherlands}
\affil[2]{Nokia Bell Labs, Murray Hill, NJ, USA}
\affil[3]{
Macquarie University, North Ryde, NSW, Australia}

\renewcommand\Authands{ , }

\date{\today}

\maketitle 

\begin{abstract}
{

We consider a system of $N$ identical server pools and a single dispatcher where tasks arrive as a Poisson process of rate $\lambda(N)$. Arriving tasks cannot be queued, and must immediately be assigned to one of the server pools to start execution, or discarded. The execution times are assumed to be exponentially distributed with unit mean, and do not depend on the number of other tasks receiving service. However, the experienced performance (e.g.~in terms of received throughput) does degrade with an increasing number of concurrent tasks at the same server pool. The dispatcher therefore aims to evenly distribute the tasks across the various server pools. Specifically, when a task arrives, the dispatcher assigns it to the server pool with the minimum number of tasks among $d(N)$ randomly selected server pools. This assignment strategy is called the JSQ$(d(N))$ scheme, as it resembles the power-of-$d$ version of the Join-the-Shortest-Queue (JSQ) policy, and will also be referred to as such in the special case $d(N) = N$.
 
We construct a stochastic coupling to bound the difference in the system occupancy processes between the JSQ policy and a scheme with an arbitrary value of $d(N)$. We use the coupling to derive the fluid limit in case $d(N) \to \infty$ and $\lambda(N)/N \to \lambda$ as $N \to \infty$, along with the associated fixed point. The fluid limit turns out to be insensitive to the exact growth rate of $d(N)$, and coincides with that for the JSQ policy. We further leverage the coupling to establish that the diffusion limit corresponds to that for the JSQ policy as well, as long as $d(N)/\sqrt{N} \log(N) \to \infty$, and characterize the common limiting diffusion process. These results indicate that the JSQ optimality can be preserved at the fluid-level and diffusion-level while reducing the overhead by nearly a factor O($N$) and O($\sqrt{N}/\log(N)$), respectively.
}
\end{abstract}
\newpage
\tableofcontents

\section{Introduction}

In the present paper we establish asymptotic optimality for a broad
class of randomized load balancing strategies.
While the specific features of load balancing policies may considerably
differ, the principal purpose is to distribute service requests or tasks
among servers or distributed resources in parallel-processing systems.
Well-designed load balancing schemes provide an effective mechanism
for improving relevant performance metrics experienced by users
while achieving high resource utilization levels.
The analysis and design of load balancing policies has attracted
strong renewed interest in the last several years, mainly motivated
by significant challenges involved in assigning tasks
(e.g.~file transfers, compute jobs, data base look-ups)
to servers in large-scale data centers, see for instance~\cite{Ananta13}.

Load balancing schemes can be broadly categorized as static (open-loop),
dynamic (closed-loop), or some intermediate blend, depending on the
amount of real-time feedback or state information (e.g.~queue
lengths or load measurements) that is used in assigning tasks.
Within the category of dynamic policies, one can further distinguish
between push-based and pull-based approaches, depending on whether the
initiative resides with a dispatcher actively collecting feedback from the
servers, or with the servers advertizing their availability or load status.
The use of state information naturally allows dynamic policies
to achieve better performance and greater resource pooling gains,
but also involves higher implementation complexity and potentially
substantial communication overhead.
The latter issue is particularly pertinent in large-scale data centers,
which deploy thousands of servers and handle massive demands,
with service requests coming in at huge rates.

In the present paper we focus on a basic scenario of $N$~identical
parallel server pools and a single dispatcher where tasks arrive
as a Poisson process.
Incoming tasks cannot be queued, and must immediately be dispatched
to one of the server pools to start execution, or discarded.
The execution times are assumed to be exponentially distributed,
and do not depend on the number of other tasks receiving service,
but the experienced performance (e.g.~in terms of received throughput
or packet-level delay) does degrade in a convex manner with
an increasing number of concurrent tasks.
These characteristics pertain for instance to video streaming sessions
and various interactive applications.
In contrast to elastic data transfers or computing-intensive jobs,
the duration of such sessions is hardly affected by the number
of contending service requests.
The perceived performance in terms of video quality or packet-level
latency however strongly varies with the number of concurrent tasks,
creating an incentive to distribute the incoming tasks across the
various server pools as evenly as possible.

Specifically, adopting the usual time scale separation assumption,
suppose that the task-perceived performance at a particular server pool
can be described as some function $F(x)$ of the instantaneous number
of concurrent tasks~$x$, and let $X = (X_1, \dots, X_N)$,
with $X_n$ the number of active tasks at the $n^{\mathrm{th}}$ server pool.
Then $G(X) = \sum_{n = 1}^{N} X_n F(X_n) / \sum_{n = 1}^{N} X_n$,
provides a proxy for the instantaneous overall system performance.
In many situations, the function $G(\cdot)$ tends to be either
Schur-convex of Schur-concave.
For example, if $F(\cdot)$ is convex increasing (for instance average
packet-level delay), then $G(\cdot)$ is Schur-convex, i.e.,
$G(X) \leq G(Y)$ if $X$ is majorized by $Y$, i.e., $X$ is `more balanced' 
than $Y$ with $\sum_{n = 1}^{N} X_n = \sum_{n = 1}^{N} Y_n$.
Likewise, if $F(X_n) = U(1 / X_n)$, with $U(\cdot)$ is concave
increasing (for instance throughput utility), then $G(\cdot)$ is
Schur-concave, i.e., $G(X) \geq G(Y)$ if $X$ is majorized by $Y$.

The so-called Join-the-Shortest-Queue (JSQ) policy has primarily been
considered for load balancing among parallel \emph{single-server
queues} where it furnishes several strong optimality guarantees
\cite{EVW80,W78,Winston77}.
Its fundamental ability of optimally balancing tasks across parallel
resources also translates however into crucial optimality properties with
respect to the performance criterion $G(\cdot)$ in the present context with infinite-server dynamics.
In particular, let $X^\Pi(t) = (X_1^\Pi(t), \dots, X_N^\Pi(t))$,
with $X_n^\Pi(t)$ denoting the number of active tasks at the $n^{\mathrm{th}}$ server
pool at time~$t$ under a task assignment scheme~$\Pi$.
Then, given the same initial conditions and in the absence of any blocking,
$\big\{X^{\JSQ}(t)\big\}_{t \geq 0}$ is majorized by $\big\{X^\Pi(t)\big\}_{t \geq 0}$
for any non-anticipating
task assignment scheme~$\Pi$ \cite{towsley,Towsley95,STC93, M87, MS91}.
Thus $G(X^{\JSQ}(t))$ is either stochastically smaller or larger than
$G(X^\Pi(t))$ at all times~$t$ for any task assignment scheme~$\Pi$,
depending on whether the function $G(\cdot)$ is Schur-convex
or Schur-concave.
In a scenario where each server pool can only accommodate a maximum
of $B < \infty$ simultaneous tasks, the JSQ policy belongs to the class
of policies that stochastically minimize the total cumulative number
of blocked tasks over any time interval~$[0, t]$~\cite{Towsley1992, STC93}.

In order to implement the JSQ policy, a dispatcher requires
instantaneous knowledge of the numbers of tasks at all the server pools,
which may give rise to a substantial communication burden,
and may not be scalable in scenarios with large numbers of server pools.
The latter issue has motivated consideration of so-called JSQ($d$)
policies, where the dispatcher assigns an incoming task to a server pool
with the minimum number of active tasks among $d$~randomly selected
server pools.
Mean-field limit theorems in Mitzenmacher~\cite{Mitzenmacher01}
and Vvedenskaya {\em et al.}~\cite{VDK96} indicate that even a value
as small as $d = 2$ yields significant performance improvements
in a single-server queueing regime with $N \to \infty$,
in the sense that the tail of the queue length distribution at each
individual server falls off much more rapidly compared to a strictly
random assignment policy ($d = 1$).
This is commonly referred to as the ``power-of-two'' effect.
Work of Turner~\cite{T98} and recent papers by Mukhopadhyay {\em et al.}~\cite{MKMG15,MMG15}
and Xie {\em et al.}~\cite{XDLS15} have shown similar power-of-two
properties for loss probabilities in a \emph{blocking} scenario with infinite-server dynamics as
described above.

As illustrated by the above, the diversity parameter~$d$ induces
a fundamental trade-off between the amount of communication overhead
and the performance in terms of blocking probabilities or throughputs.
For example, a strictly random assignment policy can be implemented
with zero overhead, but for any finite buffer capacity $B < \infty$
the blocking probability does \emph{not} fall to zero as $N \to \infty$.
In contrast, a nominal implementation of the JSQ policy (without
maintaining state information at the dispatcher) involves O($N$)
overhead per task, but it can be shown that for any subcritical load,
the blocking probability vanishes as $N \to \infty$.
As mentioned above, JSQ($d$) strategies with a fixed parameter $d \geq 2$
yield significant performance improvements over purely random task
assignment while reducing the overhead by a factor O($N$) compared
to the JSQ policy.
However, the blocking probability does \emph{not} vanish in the limit,
and in that sense a fixed value of~$d$ is not sufficient to achieve
asymptotically optimal performance.

In order to gain further insight in the trade-off between performance
and communication overhead as governed by the diversity parameter~$d$,
we also consider a regime where the number of servers~$N$ grows large,
but allow the value of~$d$ to depend on~$N$,
and write $d(N)$ to explicitly reflect that.
For convenience, we assume a Poisson arrival process of rate $\lambda(N)$
and unit-mean exponential service requirements.

We construct a stochastic coupling to bound the difference
in the system occupancy processes between the JSQ policy and a scheme
with an arbitrary value of $d(N)$.
We exploit the coupling to obtain the fluid limit
in case $\lambda(N) / N \to \lambda < B$ and $d(N)\to\infty$
as $N \to \infty$, along with the associated fixed point.
As it turns out, the fluid limit is insensitive to the exact growth
rate of $d(N)$, and in particular coincides with that for the ordinary JSQ
policy.
This implies that the overhead of the JSQ policy can be reduced by
almost a factor O($N$) while maintaining fluid-level optimality.

We further consider the diffusion limit of the system occupancy states, and consider the infinite-server dynamics analog of the Halfin-Whitt regime.
We leverage the above-mentioned coupling to prove that the diffusion limit
in case $d(N) /(\sqrt{N} \log(N))\to\infty$ corresponds to that for the ordinary JSQ policy,
and characterize the common limiting diffusion process.
This indicates that the overhead of the JSQ policy can be reduced by
almost a factor O($\sqrt{N}/\log(N)$) while retaining diffusion-level optimality.

The above results mirror fluid-level and diffusion-level optimality properties reported in the companion paper~\cite{MBLW16-3}
for the power-of-d($N$) strategies in a scenario with single-server queues.
As it turns out, however, the infinite-server dynamics in the present paper require a fundamentally different coupling argument to establish asymptotic equivalence.
In particular, for the single-server dynamics, first the servers are ordered according to the number of active tasks, and the departures at the ordered servers under two different policies are then coupled.
In contrast, for the infinite-server dynamics, the departure rate at the ordered server pools can vary depending on the exact number of active tasks.
Therefore, the departure processes under two different policies cannot be coupled as before, which necessitates the construction of a novel stochastic coupling.
Specifically, one can think of the coupling for the single-server dynamics as one-dimensional (depending only upon the ordering of the servers), while the coupling we introduce in this paper is two-dimensional, with the server ordering as one coordinate and the number of tasks as the other, as will be explained in greater detail later.
We further elaborate on the necessity and novelty of the coupling methodology developed in the current paper, and reflect on the contrast with the stochastic optimality results for the JSQ policy in the
existing literature and the coupling technique in~\cite{MBLW16-3} in Remarks~\ref{rem:novelty} and \ref{rem:contrast}.
In addition, 
the fluid- and diffusion-limit results in the infinite-server scenario are also notably different from those in~\cite{MBLW16-3}.
More specifically, we extend the fluid-limit result in~\cite[Theorem~4.1]{MBLW16-3} to a more general class of assignment probabilities and departure rate functions, and depending on whether the scaled arrival rate converges to an integer or not,  obtain a qualitatively different behavior of the occupancy state process on diffusion scale.
Furthermore, the diffusion limit result in \cite[Theorem~2.4]{MBLW16-3} characterizes the diffusion-scale behavior only in the transient regime, whereas in the current paper we are able to analyze the steady-state behavior as well.

The remainder of the paper is organized as follows.
In Section~\ref{sec: model descr} we present a detailed model description, and provide an overview of the main results. 
In Section~\ref{sec:eqiv} we explain the proof outline and introduce a notion of asymptotic equivalence of two assignment schemes. 
Section~\ref{sec:proof-equiv} introduces a stochastic coupling between any two schemes, and proves the asymptotic equivalence results.
Sections~\ref{sec:proof-equiv}--\ref{sec:integral} contain the proofs of the main results, and in Section~\ref{sec:performance} we reflect upon various performance implications. We conclude in Section~\ref{sec:conclusion} with some pointers to open problems and future research.

\section{Main Results}\label{sec: model descr}
\label{sec:main}

\subsection{Model description and notation}
Consider a system with $N$~parallel identical server pools and a single
dispatcher where tasks arrive as a Poisson process of rate~$\lambda(N)$.
Arriving tasks cannot be queued, and must immediately be assigned
to one of the server pools to start execution.
The execution times are assumed to be exponentially distributed with unit
mean, and do not depend on the number of other tasks receiving service.
Each server pool is however only able to accommodate a maximum
of $B$~simultaneous tasks (possibly $B = \infty$),
and when a task is allocated to a server pool that is already handling
$B$~active tasks, it gets permanently discarded.

Specifically, when a task arrives, the dispatcher assigns it to the
server pool with the minimum number of active tasks among $d(N)$
randomly selected server pools ($1 \leq d(N) \leq N$).
As mentioned earlier, this assignment strategy is called a JSQ$(d(N))$
scheme, as it closely resembles the power-of-$d$ version
of the  Join-the-Shortest-Queue (JSQ) policy, and will also
consisely be referred to as such in the special case $d(N) = N$.
We will consider an asymptotic regime where the number of server pools~$N$
and the task arrival rate $\lambda(N)$ grow large in proportion,
with $\lambda(N) / N \to \lambda\leq B$ as $N \to \infty$.
For convenience, we denote $K = \lfloor \lambda \rfloor$
and $f = \lambda - K \in [0, 1)$.

For any $d(N)$ ($1 \leq d(N) \leq N$), let $\QQ^{\sss d(N)}(t) =
(Q_1^{\sss d(N)}(t), Q_2^{\sss d(N)}(t), \dots, Q_B^{\sss d(N)}(t))$
be the system occupancy state, where $Q_i^{\sss d(N)}(t)$ is the number
of server pools under the JSQ($d(N)$) scheme with $i$~or more active
tasks at time~$t$, $i = 1, \dots, B$. 
A schematic diagram of the $Q_i$-values is provided in Figure~\ref{fig:1}.
We occasionally omit the superscript $d(N)$, and replace it by~$N$, to refer
to the $N^{\mathrm{th}}$ system, when the value of $d(N)$ is clear from the context.
In case of a finite buffer size $B < \infty$, when a task is discarded,
we call it an \emph{overflow} event, and we denote by $L^{\sss d(N)}(t)$ the
total number of overflow events under the JSQ($d(N)$) policy up to time~$t$. 

Throughout we assume that at each arrival epoch the server pools are ordered
in  nondecreasing order of the number of active tasks (ties can be broken arbitrarily), see Figure~\ref{fig:1}, and whenever we refer to some ordered server pool, it should be understood with respect to this prior ordering, unless mentioned otherwise.

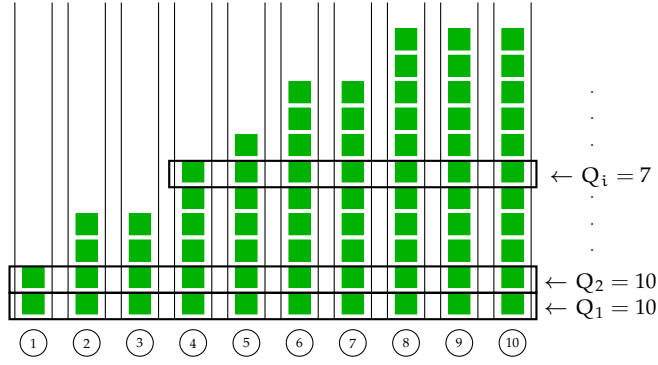
\begin{figure}
\begin{center}
\begin{tikzpicture}[scale=.70]
\foreach \x in {10, 9,...,1}
	\draw (\x,6)--(\x,0)--(\x+.7,0)--(\x+.7,6);
\foreach \x in {10, 9,...,1}
	\draw (\x+.35,-.45) node[circle,inner sep=0pt, minimum size=10pt,draw] {{{\tiny $\mathsmaller{\x}$}}} ;
\foreach \y in {0, .5}
	\draw[fill=green!70!black,green!70!black] (1.15,.1+\y) rectangle (1.55,.5+\y);
\foreach \y in {0, .5, 1, 1.5}
	\draw[fill=green!70!black,green!70!black] (2.15,.1+\y) rectangle (2.55,.5+\y);
\foreach \y in {0, .5, 1, 1.5}
	\draw[fill=green!70!black,green!70!black] (3.15,.1+\y) rectangle (3.55,.5+\y);
\foreach \y in {0, .5, 1, 1.5, 2, 2.5}
	\draw[fill=green!70!black, green!70!black] (4.15,.1+\y) rectangle (4.55,.5+\y);
\foreach \y in {0, .5, 1, 1.5, 2, 2.5, 3}
	\draw[fill=green!70!black,green!70!black] (5.15,.1+\y) rectangle (5.55,.5+\y);
\foreach \y in {0, .5, 1, 1.5, 2, 2.5, 3, 3.5, 4}
	\draw[fill=green!70!black,green!70!black] (6.15,.1+\y) rectangle (6.55,.5+\y);
\foreach \y in {0, .5, 1, 1.5, 2, 2.5, 3, 3.5, 4}
	\draw[fill=green!70!black,green!70!black] (7.15,.1+\y) rectangle (7.55,.5+\y);
\foreach \y in {0, .5, 1, 1.5, 2, 2.5, 3, 3.5, 4, 4.5, 5}
	\draw[fill=green!70!black, green!70!black] (8.15,.1+\y) rectangle (8.55,.5+\y);
\foreach \y in {0, .5, 1, 1.5, 2, 2.5, 3, 3.5, 4, 4.5, 5}
	\draw[fill=green!70!black,green!70!black] (9.15,.1+\y) rectangle (9.55,.5+\y);
\foreach \y in {0, .5, 1, 1.5, 2, 2.5, 3, 3.5, 4, 4.5, 5}
	\draw[fill=green!70!black,green!70!black] (10.15,.1+\y) rectangle (10.55,.5+\y);

\draw[thick] (.9,0) rectangle (10.8,.5);
\draw[thick] (.9,.5) rectangle (10.8,1);
\draw[thick] (3.9,2.5) rectangle (10.8,3);

\draw  (12, .2) node {{\scriptsize $\leftarrow Q_1=10$}};
\draw  (12, .7) node {{\scriptsize $\leftarrow Q_2=10$}};

\draw  (11.85, 1.3) node {{\tiny $\cdot$}};
\draw  (11.85, 1.8) node {{\tiny $\cdot$}};
\draw  (11.85, 2.3) node {{\tiny $\cdot$}};
\draw  (12, 2.7) node {{\scriptsize $\leftarrow Q_i=7$}};
\draw  (11.85, 3.3) node {{\tiny $\cdot$}};
\draw  (11.85, 3.8) node {{\tiny $\cdot$}};
\draw  (11.85, 4.3) node {{\tiny $\cdot$}};

\end{tikzpicture}
\end{center}
\caption{The occupancy state of the system; When the server pools are arranged in nondecreasing order of the number of active tasks, $Q_i$  represents the width of the $i^\mathrm{th}$ row, as shown above.}
\label{fig:1}
\end{figure}

Boldfaced letters will be used to denote vectors.
A sequence of random variables $\big\{X_N\big\}_{N\geq 1}$ is said to be $\Op(g(N))$, or $\op(g(N))$, for some function $g:\N\to\R_+$, if the sequence of scaled random variables $\big\{X_N/g(N)\big\}_{N\geq 1}$ is  a tight sequence, or converges to zero in probability, respectively. 
Whenever we mention `with high probability', it should be understood as `with probability tending to 1 as the underlying scaling parameter tends to infinity'.
For stochastic boundedness of a process we refer to \cite[Definition 5.4]{PTRW07}. 
Also, $f$ will be called `diverging to infinity' if $g(N)\to\infty$ as $N\to\infty$.
For any complete separable metric space $E$, denote by $D_E[0,\infty)$, the set of all $E$-valued c\'adl\'ag (right continuous with left limit exists) processes.
By the symbol `$\dto$' we denote convergence in distribution for real-valued random variables, and  with respect to Skorohod-$J_1$ topology for \emph{c\'adl\'ag} processes.

\subsection{Fluid-limit results}\label{ssec:fluid}

In order to state the fluid-limit results, we first introduce some
useful notation.
Denote the fluid-scaled system occupancy state by
$\qq^{\sss d(N)}(t) := \QQ^{\sss d(N)}(t) / N$.
We will denote by $\tilde{S}=\big\{\QQ\in\Z^B:Q_i \leq Q_{i-1} \mbox{ for all } i = 2, \dots, B\big\}$ and $S =\big\{\qq \in [0, 1]^B: q_i \leq q_{i-1} \mbox{ for all } i = 2, \dots, B\big\}$
 the set of all possible unscaled and fluid-scaled occupancy states, respectively.
Further define $S^N:= S\cap\big\{i/N:1\leq i\leq N\big\}^B$ as the space of all fluid-scaled occupancy states of the $N^\mathrm{th}$ system.
We take the following product norm on $S$: for $\mathbf{q}_1=(q_{1,1},q_{1,2},\ldots, q_{1,B})$, $\mathbf{q}_2=(q_{2,1},q_{2,2},\ldots, q_{2,B})\in S$,
$$\rho(\mathbf{q}_1,\mathbf{q}_2):=\sum_{i=1}^{B}\frac{|q_{1,i}-q_{2,i}|}{2^i},$$
and  all the convergence results below will be with respect to product topology.
We often write $\rho(\qq_1,\qq_2)$ as $\norm{\qq_1-\qq_2}$. 
Let $(E,\hat{\rho})$ be a metric space.
We call a function $g:S\to E$  Lipschitz continuous on $S$, if there exists $L>0$, such that for all $x,y\in S,$
$$\hat{\rho}(g(x),g(y))\leq L \|x-y\|.$$
For any $\qq \in S$, denote by $m(\qq) = \min\big\{i: q_{i + 1} < 1\big\}$
the minimum number of active tasks among all server pools, with the convention that
$q_{B+1} = 0$ if $B < \infty$.
Now distinguish two cases, depending on whether the normalized arrival
rate $\lambda$ is larger than $m(\qq)(1 - q_{m(\qq) + 1})$ or not.
If $\lambda \leq m(\qq) (1 - q_{m(\qq) + 1})$, then define 
$$p_{m(\qq) - 1}(\qq) = 1,\quad\mbox{and}\quad p_i(\qq) = 0\quad\mbox{for all}\quad i \neq m(\qq) - 1.$$
On the other hand, if $\lambda >m(\qq) (1 - q_{\sss m(\qq) + 1})$,
then 
\begin{equation}
p_{i}(\qq)=
\begin{cases}
m(\qq)(1 - q_{\sss m(\qq) + 1})/\lambda & \quad\mbox{ for }\quad i=m(\qq)-1,\\
1 - p_{\sss m(\qq) - 1}(\qq) & \quad\mbox{ for }\quad i=m(\qq),\\
0&\quad \mbox{ otherwise.}
\end{cases}
\end{equation}
Note that the assumption $\lambda \leq B$ ensures that the latter case
cannot occur when $B<\infty$ and $m(\qq) = B$.

\begin{theorem}{\normalfont (Universality of fluid limit for JSQ$(d(N))$ scheme)}
\label{fluidjsqd}
Assume $\qq^{\sss d(N)}(0) \pto \qq^\infty \in S$ as $N \to \infty$.
For the JSQ($d(N)$) scheme with $d(N)$ diverging to infinity, the sequence of processes
$\big\{\qq^{\sss d(N)}(t)\big\}_{t \geq 0}$ has a weak limit $\big\{\qq(t)\big\}_{t \geq 0}$ that
satisfies the system of integral equations
\[
q_i(t) = q_i(0)+
\lambda\int_0^t p_{i-1}(\qq(s))\dif s - i\int_0^t (q_i(s) - q_{i+1}(s))\dif s, \quad i=1,\ldots, B,
\]
where $\qq(0) = \qq^\infty$ and the coefficients $p_i(\cdot)$ are
as defined above.
\end{theorem}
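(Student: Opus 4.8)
The plan is to transfer the fluid limit from the ordinary $\JSQ$ policy (the case $d(N)=N$) to the $\JSQ(d(N))$ scheme by means of the stochastic coupling developed in Section~\ref{sec:proof-equiv}. That coupling bounds the discrepancy between the two occupancy processes, and for $d(N)\to\infty$ renders it negligible on fluid scale, so that $\sup_{t\le T}\norm{\qq^{\sss d(N)}(t)-\qq^{\sss N}(t)}\pto 0$ for every $T<\infty$, where $\qq^{\sss N}$ denotes the fluid-scaled occupancy process under $\JSQ$ started from an initial state converging to $\qq^\infty$. By the triangle inequality it therefore suffices to prove that $\{\qq^{\sss N}(t)\}_{t\ge0}$ converges weakly to the unique solution $\{\qq(t)\}_{t\ge0}$ of the stated integral equations; the same limit then carries over to $\{\qq^{\sss d(N)}(t)\}_{t\ge0}$.

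For the $\JSQ$ policy I would first establish relative compactness of $\{\qq^{\sss N}\}$ in $D_S[0,\infty)$. Every arrival or departure alters a single coordinate of $\QQ^{\sss N}$ by one unit, so the fluid-scaled process makes jumps of size $O(1/N)$, while $S$ is compact in the product topology; compact containment is thus automatic and the standard tightness criteria for jump processes with vanishing maximal jump yield relative compactness, with every weak limit point having continuous paths in $S$. Writing the Dynkin decomposition $q_i^{\sss N}(t)=q_i^{\sss N}(0)+\int_0^t\beta_i^{\sss N}(\QQ^{\sss N}(s))\dif s+M_i^{\sss N}(t)$, the drift splits into an arrival part $\lambda(N)\pi_{i-1}^{\sss N}(\QQ^{\sss N}(s))/N$ and a departure part $-i\,(q_i^{\sss N}(s)-q_{i+1}^{\sss N}(s))$, where $\pi_j^{\sss N}(\QQ)$ is the probability that an incoming task is routed to a pool with exactly $j$ active tasks. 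The martingales $M_i^{\sss N}$ have predictable quadratic variation of order $1/N$ and hence vanish uniformly on compacts in probability, and since $\lambda(N)/N\to\lambda$ the departure part converges along any convergent subsequence to $-i\int_0^t(q_i(s)-q_{i+1}(s))\dif s$ by the continuous mapping theorem.

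The main obstacle is to identify the limit of the arrival term $\lambda(N)\int_0^t\pi_{i-1}^{\sss N}(\QQ^{\sss N}(s))\dif s/N$ and to show that it equals $\lambda\int_0^t p_{i-1}(\qq(s))\dif s$. Under $\JSQ$ every task joins a pool at the current minimum level, and for $m(\qq)\ge1$ that level sits on the boundary $q_{m(\qq)}=1$, so the routing is genuinely governed by a fast boundary layer rather than by a smooth interior drift. Indeed, the $\Theta(N)$ pools at level $m$ shed tasks at total fluid rate $m(1-q_{m+1})$, continually creating pools at the lower level $m-1$ which $\JSQ$ instantaneously refills. The split of arrivals between refilling level $m-1$ and pushing level $m$ up to $m+1$ is dictated by the balance between this creation rate and the arrival rate $\lambda$: when $\lambda\le m(1-q_{m+1})$ all arrivals are absorbed in refilling ($p_{m-1}=1$) and the minimum recedes, whereas when $\lambda>m(1-q_{m+1})$ refilling consumes a fraction $m(1-q_{m+1})/\lambda$ and the remainder raises the occupancy ($p_m=1-p_{m-1}$); this is exactly the definition of the coefficients, the sole exception being $m=0$, where trivially $p_0=1$. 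Making this rigorous is the delicate point: it requires an averaging argument showing that the number of pools strictly below the running minimum is $\op(N)$ and that its time-averaged refilling contribution converges to the prescribed coefficients, i.e. controlling a fast reflected boundary process riding on top of the slow fluid dynamics. I expect to carry this out by deriving matching upper and lower bounds on $\int_0^t\pi_{i-1}^{\sss N}\dif s$, using the monotone ordering of the coordinates together with the complementarity relation $\dot q_m=0$ forced on $\{q_m=1\}$ in the regime $\lambda>m(1-q_{m+1})$.

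It remains to argue uniqueness of the solution to the integral equations, which upgrades subsequential convergence to convergence of the whole sequence. Between consecutive epochs at which the running minimum $m(\qq(t))$ changes, the coefficients $p_i(\cdot)$ are Lipschitz in $\qq$, so the system is a Lipschitz ODE with a unique local solution; uniqueness then propagates across the locally finite sequence of boundary-crossing times by a gluing argument, with the ordering constraints $q_{i+1}\le q_i$ and the complementarity relation at $\{q_m=1\}$ pinning down the continuation. Given uniqueness, every weak limit point of $\{\qq^{\sss N}\}$ coincides with $\qq$, so the whole sequence converges and, by the first paragraph, $\{\qq^{\sss d(N)}(t)\}_{t\ge0}\dto\{\qq(t)\}_{t\ge0}$, as claimed.
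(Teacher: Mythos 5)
Your overall architecture is the same as the paper's: use the coupling of Section~\ref{sec:proof-equiv} (Theorem~\ref{th:pwr of d}, Corollary~\ref{cor:fluid}) to reduce to the ordinary JSQ policy, then prove the fluid limit for JSQ via a martingale decomposition, relative compactness, and identification of the drift. The relative-compactness and martingale-vanishing steps are fine and match Propositions~\ref{prop:mart zero1} and~\ref{prop:rel compactness}. The problem is that the one step you yourself flag as ``the delicate point'' --- showing that $\lambda(N)N^{-1}\int_0^t\pi_{i-1}^{\sss N}(\QQ^{\sss N}(s))\,\dif s$ converges to $\lambda\int_0^t p_{i-1}(\qq(s))\,\dif s$ --- is left as a statement of intent (``I expect to carry this out by deriving matching upper and lower bounds\dots''), and this is precisely the heart of the proof. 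The routing indicator $\ind{\ZZ^N(s)\in\mathcal{R}_i}$ is a functional of the fast process $\ZZ^N=N-\QQ^N$, which oscillates $O(\varepsilon N)$ times on an interval of length $\varepsilon$; there is no pointwise limit of $\pi_{i-1}^{\sss N}(\QQ^{\sss N}(s))$, only a limit of its time average. The paper resolves this with the Hunt--Kurtz time-scale separation machinery: it introduces the occupation measure $\alpha^N(A_1\times A_2)=\int_{A_1}\ind{\ZZ^N(s)\in A_2}\dif s$ on $[0,\infty)\times G$ with $G=\bZ_+^B$ compactified, proves joint relative compactness of $(\qq^N,\alpha^N)$, and identifies the limiting kernel as a stationary measure $\pi_{\qq(s)}$ of an explicit limiting birth-death process $\ZZ_{\qq}$. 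The nontrivial content is then that $\pi_\qq$ is \emph{uniquely} determined by $\qq$: the limiting birth-death chain on $\bZ_+$ is reducible (the point at infinity is absorbing for the topology used), and one must rule out mass splitting between $\Z_+$ and $\{\infty\}$ by showing $\pi^{(m)}(Z=\infty)$ equals $1$ when $\mu_m(\qq)\geq\lambda$ and $0$ when $\mu_m(\qq)<\lambda$, the latter by a contradiction with $\dif q_m/\dif t\leq 0$ on $\{q_m=1\}$. Your proposed ``matching upper and lower bounds using the monotone ordering and complementarity'' would, if carried out, have to reproduce exactly this dichotomy, and nothing in your sketch indicates how the lower bound on the refilling rate would be obtained without the averaging argument.

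A secondary issue: your uniqueness-of-the-ODE argument assumes the boundary-crossing times of $m(\qq(t))$ form a locally finite set and that $p_i(\cdot)$ is Lipschitz between them, but $p_{m(\qq)-1}(\qq)=\min\{m(\qq)(1-q_{m(\qq)+1})/\lambda,1\}$ is discontinuous in $\qq$ exactly where $m(\qq)$ jumps, and local finiteness of the crossing set is not obvious a priori. The paper sidesteps this by proving uniqueness of the stationary measure $\pi_\qq$ for each fixed $\qq$ rather than uniqueness of the ODE flow, which is what actually pins down the subsequential limits. As written, your proposal establishes relative compactness and the form of the departure drift, but not the arrival drift nor the identification of the limit, so it does not yet constitute a proof.
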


The above theorem shows that the fluid-level dynamics do not depend
on the specific growth rate of $d(N)$ as long as $d(N) \to\infty$
as $N \to \infty$.
In particular, the JSQ$(d(N))$ scheme with $d(N) \to\infty$ as $N \to\infty$ exhibits
the same behavior as the ordinary JSQ policy, and thus achieves
fluid-level optimality. This result can be intuitively interpreted as follows. Since $d(N)$ is growing, for large $N$, at an arrival epoch, if the fraction of server pools with the minimum number of active tasks becomes positive, then with high probability at least one of the $d(N)$ selected server pools will be from the ones with the minimum number of active tasks.
This ensures that as long as $d(N)\to\infty$ as $N \to\infty$, the difference in $Q_i$-values between the ordinary JSQ policy and the JSQ$(d(N))$ scheme can not become $O(N)$, yielding fluid-level optimality.

The coefficient $p_i(\qq)$ represents the fraction
of incoming tasks assigned to server pools with exactly~$i$ active
tasks in the fluid-level state $\qq \in S$.
Assuming $m(\qq) < B$, a strictly positive fraction $1 - q_{m(\qq) + 1}$
of the server pools have exactly $m(\qq)$ active tasks.
Since $d(N)\to\infty$ as $N \to\infty$, the fraction of incoming tasks that get assigned
to server pools with $m(\qq) + 1$ or more active tasks is therefore zero:
$p_i(\qq) = 0$ for all $i = m(\qq) + 1, \dots, B - 1$.
Also, tasks at server pools with exactly~$i$ active tasks are completed
at (normalized) rate $i(q_i - q_{i + 1})$, which is zero for all
$i = 1, \dots, m(\qq) - 1$, and hence the fraction of incoming tasks
that get assigned to server pools with $m(\qq) - 2$ or less active
tasks is zero as well: $p_i(\qq) = 0$ for all $i = 0, \dots, m(\qq) - 2$.
This only leaves the fractions $p_{m(\qq) - 1}(\qq)$
and $p_{m(\qq)}(\qq)$ to be determined.
Now observe that the fraction of server pools with exactly $m(\qq) - 1$
active tasks is zero.
However, since tasks at server pools with exactly $m(\qq)$ active tasks
are completed at (normalized) rate $m(\qq) (1 - q_{m(\qq) + 1}) > 0$,
incoming tasks can be assigned to server pools with exactly $m(\qq) - 1$
active tasks at that rate.
We thus need to distinguish between two cases, depending on whether
the normalized arrival rate $\lambda$ is larger than
$m(\qq) (1 - q_{m(\qq) + 1})$ or not.
If $\lambda \leq m(\qq) (1 - q_{m(\qq) + 1})$, then all the incoming tasks
can be assigned to server pools with exactly $m(\qq) - 1$ active tasks,
so that $p_{m(\qq) - 1}(\qq) = 1$ and $p_{m(\qq)}(\qq) = 0$.
On the other hand, if $\lambda > m(\qq )(1 - q_{m(\qq) + 1})$, then not
all incoming tasks can be assigned to server pools with exactly
$m(\qq) - 1$ active tasks, and a positive fraction will be assigned
to server pools with exactly $m(\qq)$ active tasks:
$p_{m(\qq) - 1}(\qq) = m(\qq) (1 - q_{m(\qq) + 1}) / \lambda$
and $p_{m(\qq)}(\qq) = 1 - p_{m(\qq) - 1}(\qq)$.

It is easily verified that the unique fixed point of the differential
equation in Theorem~\ref{fluidjsqd} is given by
\begin{equation}
\label{eq:fixed point}
q_i^\star = \left\{\begin{array}{ll} 1 & i = 1, \dots, K \\
f & i = K + 1 \\
0 & i = K + 2, \dots, B, \end{array} \right.
\end{equation}
and thus $\sum_{i=1}^B q_i^\star =\lambda$.
This is consistent with the results in Mukhopadhyay
{\em et al.}~\cite{MKMG15,MMG15} and Xie {\em et al.}~\cite{XDLS15}
for fixed~$d$, where taking $d \to \infty$ yields the same fixed point.
However, the results in \cite{MKMG15,MMG15,XDLS15} for fixed~$d$
cannot be directly used to handle joint scalings, and do not yield the
universality of the entire fluid-scaled sample path
for arbitrary initial states as established in Theorem~\ref{fluidjsqd}.

Having obtained the fixed point of the fluid limit, we now establish the interchange of the mean-field 
$(N\to\infty)$ and stationary $(t\to\infty)$ limits. Let 
$$\pi^{\sss d(N)}(\cdot)=\lim_{t\to\infty}\Pro{\qq^{\sss d(N)}(t)=\cdot}$$ 
be the stationary measure of the occupancy states of the $N^{\mathrm{th}}$ system. 

\begin{proposition}[{Interchange of limits}]
\label{prop:interchange}
The sequence of stationary measures $\big\{\pi^{\sss d(N)}\big\}_{N\geq 1}$ with $d(N)\to\infty$ as $N \to\infty$ converges weakly to $\pi^\star$, where $\pi^\star=\delta_{\qq^\star}$ with $\delta_x$ being the Dirac measure concentrated upon $x$, and $\qq^\star$ defined by~\eqref{eq:fixed point}.
\end{proposition}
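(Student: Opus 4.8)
The plan is to follow the standard route for interchange-of-limits results: first establish tightness of the family $\big\{\pi^{\sss d(N)}\big\}$, then show that every weak subsequential limit is invariant under the fluid dynamics of Theorem~\ref{fluidjsqd}, and finally prove that these dynamics are globally attracted to the unique fixed point $\qq^\star$, so that $\delta_{\qq^\star}$ is the only invariant probability measure. Tightness is immediate: the state space $S\subseteq[0,1]^B$ is compact in the product topology (Tychonoff together with the closedness of the ordering constraints $q_i\le q_{i-1}$), so $\big\{\pi^{\sss d(N)}\big\}$ is automatically tight and, by Prohorov, relatively compact. Since the set of probability measures on the compact space $S$ is itself compact, it then suffices to show that every subsequential limit equals $\delta_{\qq^\star}$; convergence of the whole sequence follows.

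For the invariance step, fix a subsequence along which $\pi^{\sss d(N)}\Rightarrow\pi$ and run the occupancy process in stationarity, i.e.\ with $\qq^{\sss d(N)}(0)\sim\pi^{\sss d(N)}$, so that $\qq^{\sss d(N)}(t)\sim\pi^{\sss d(N)}$ for every $t\ge 0$. By Skorohod's representation theorem we may realize the initial states on a common probability space with $\qq^{\sss d(N)}(0)\to\QQ_0$ almost surely, where $\QQ_0\sim\pi$. Conditioning on $\QQ_0$ and invoking the (random-initial-condition version of) Theorem~\ref{fluidjsqd} — whose hypothesis $\qq^{\sss d(N)}(0)\pto\qq^\infty$ holds for a.e.\ realization of $\QQ_0$ — the processes converge to the deterministic fluid trajectory $\qq(\cdot)=\Phi_\cdot(\QQ_0)$ started at $\QQ_0$. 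Passing to the limit in $\qq^{\sss d(N)}(t)\sim\pi^{\sss d(N)}$ yields $\Phi_t(\QQ_0)\sim\pi$ for every $t\ge 0$; that is, $\pi$ is invariant under the fluid flow $\big\{\Phi_t\big\}_{t\ge0}$.

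It remains to show that $\Phi_t(\qq)\to\qq^\star$ as $t\to\infty$ for every $\qq\in S$, and this is the crux. A first reduction comes from summing the integral equations of Theorem~\ref{fluidjsqd}: since $\sum_{i=1}^B p_{i-1}(\qq)=1$ when $\lambda\le B$ and the departure terms telescope, the total mass obeys $\frac{\dif}{\dif t}\sum_i q_i(t)=\lambda-\sum_i q_i(t)$, so $\sum_i q_i(t)\to\lambda$ exponentially fast. To upgrade this to convergence of the entire profile I would exploit the priority-fill structure of the drift: tasks are only ever routed to the least-loaded pools (levels $m(\qq)-1$ and $m(\qq)$), so mass at a given level is replenished only once all lower levels have saturated, whereas departures drain every nonempty level. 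I would make this precise by tracking $m(\qq(t))$ and arguing level by level — draining levels $i\ge K+2$ from the top down and, complementarily, saturating levels $i\le K$ — or by exhibiting a Lyapunov function, for instance a weighted discrepancy $\sum_i w_i\,|q_i(t)-q_i^\star|$ with suitably chosen weights $w_i$, that decreases along the flow. Establishing this global attraction cleanly, given that the vector field is only piecewise smooth through its dependence on $m(\qq)$, is the main obstacle.

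Combining the two ingredients finishes the proof. If $\pi$ is invariant, then the stationary trajectory satisfies $\Phi_t(\QQ_0)\sim\pi$ for all $t$, while global stability forces $\Phi_t(\QQ_0)\to\qq^\star$ almost surely; since $S$ is compact, bounded convergence gives $\pi=\lim_{t\to\infty}\mathrm{Law}\big(\Phi_t(\QQ_0)\big)=\delta_{\qq^\star}$. As this holds for every subsequential limit, $\pi^{\sss d(N)}\Rightarrow\delta_{\qq^\star}=\pi^\star$, as claimed.
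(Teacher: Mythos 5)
Your first two steps coincide with the paper's own proof: compactness of $S$ in the product topology gives tightness and, via Prohorov, relative compactness of $\big\{\pi^{\sss d(N)}\big\}$, and propagating stationarity through the fluid limit of Theorem~\ref{fluidjsqd} shows that any subsequential weak limit $\hat{\pi}$ is invariant under the deterministic fluid flow. (Your use of the Skorohod representation to reduce a random initial condition to the deterministic hypothesis of Theorem~\ref{fluidjsqd} is a reasonable way to make that second step precise.)

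The gap is in the third step, and you have named it yourself: you never actually prove that $\Phi_t(\qq)\to\qq^\star$ for every $\qq\in S$. What you establish is only that the total mass $\sum_i q_i(t)$ relaxes to $\lambda$, followed by a sketch (``argue level by level'', ``exhibit a Lyapunov function'') that you explicitly flag as the main obstacle. Without global attraction --- or some substitute showing that every measure invariant under the flow is carried by the fixed-point set --- the argument does not close: a semiflow can have a unique equilibrium and still admit invariant probability measures not concentrated on it (for instance, supported on a periodic orbit), so ``the fixed point is unique'' does not by itself yield $\hat{\pi}=\delta_{\qq^\star}$. It is worth noting that the paper's proof is terse at exactly this point, passing directly from uniqueness of the fixed point of the integral equation to the conclusion, so you have correctly located where the real work lies; but as submitted your proof leaves that work undone. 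To finish along the lines you indicate, one would carry out the monotone drain/saturation argument: after the total-mass step, show $q_i(t)\to 0$ for $i\geq K+2$ from the top level down (these levels receive no arrival flow once $m(\qq)<B$ and are drained at rate at least $i\,q_i$ minus vanishing terms), then that the levels $i\leq K$ saturate, and finally that $q_{K+1}(t)\to f$, taking care with the piecewise definition of the drift through $m(\qq)$.
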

\begin{proof}
Observe that $\pi^{\sss d(N)}$ is defined on $S$, and $S$ is a compact set when endowed with the product topology. Prohorov's theorem implies that the sequence $\big\{\pi^{\sss d(N)}\big\}_{N\geq 1}$ is relatively compact, and hence, has a convergent subsequence. Let $\big\{\pi^{\sss d(N_n)}\big\}_{n\geq 1}$ be a convergent subsequence, with $\big\{N_n\big\}_{n\geq 1}\subset\N$, such that $\pi^{\sss d(N_n)}\dto\hat{\pi}$. We show that $\hat{\pi}$ is unique and equals the measure $\pi^\star.$

First of all note that if $\qq^{\sss d(N_n)}(0)\sim\pi^{\sss d(N_n)}$, then $\qq^{\sss d(N_n)}(t)\sim\pi^{\sss d(N_n)}$. Also, the fact that $\qq^{\sss d(N_n)}(t)\dto\qq(t)$, and $\pi^{\sss d(N_n)}\dto\hat{\pi}$, means that $\hat{\pi}$ is a fixed point of the deterministic process $\big\{\qq(t)\big\}_{t\geq 0}$. Since the latter fixed point is unique, $\qq^\star$, we can conclude the desired convergence of the stationary measure.
\end{proof}

\subsection{Diffusion-limit results for non-integral \texorpdfstring{$\boldsymbol{\lambda}$}{lambda}}

As it turns out, the diffusion-limit results may be qualitatively
different, depending on whether $f = 0$ or $f > 0$,
and we will distinguish between these two cases accordingly.
Observe that for any assignment scheme, in the absence of overflow events, the total number of active
tasks evolves as the number of jobs in an M/M/$\infty$ system
with arrival rate $\lambda(N)$ and unit service rate,
for which the diffusion limit is well-known~\cite{Robert03}.
For the JSQ$(d(N))$ scheme with $d(N)/(\sqrt{N} \log(N))\to\infty$ as $N\to\infty$, we can
establish, for suitable initial conditions, that the total number of server
pools with $K - 2$ or less and $K + 2$ or more tasks is negligible
on the diffusion scale.
If $f > 0$, the number of server pools with $K - 1$ tasks is negligible
as well, and the dynamics of the number of server pools with $K$
or $K + 1$ tasks can then be derived from the known diffusion limit
of the total number of tasks mentioned above.
In contrast, if $f = 0$, the number of server pools with $K - 1$ tasks
is not negligible on the diffusion scale, and the limiting behavior is
qualitatively different, but can still be characterized.

We first consider the case $f > 0$, and define $f(N):= \lambda(N)-KN.$
Based on the above observations,
we define the following centered and scaled processes:
\begin{equation}
\begin{split}
\bar{Q}_i^{\sss d(N)}(t) &:= \frac{N-Q^{\sss d(N)}_i(t)}{\sqrt{N}}\geq 0,\quad i \leq K,\\  
\bar{Q}_{K+1}^{\sss d(N)}(t) &:= \frac{Q^{\sss d(N)}_{K+1}(t)-f(N)}{\sqrt{N}}\in\R,\\
\bar{Q}_i^{\sss d(N)}(t) &:=Q^{\sss d(N)}_i(t)\geq 0,\quad\text{for}\quad i \geq K+2.
\end{split}
\end{equation}
\begin{theorem}
{\normalfont (Universality of diffusion limit for JSQ$(d(N))$ scheme, $f > 0$)}
\label{th:diff pwr of d 1}
If $f > 0$, $\bQ^{\sss d(N)}_{K+1}(0)\pto\bQ_{K+1}\in\R$, $\bQ^{\sss d(N)}_i(0)\pto 0$ for $i\neq K+1$, and $d(N)/(\sqrt{N} \log(N))\to\infty$ as $N\to\infty$, then the following holds
as $N \to \infty$:
\begin{enumerate}[{\normalfont(i)}]
\item For $i \leq K$, $\big\{\bar{Q}_i^{\sss d(N)}(t)\big\}_{t \geq 0} \dto
\big\{\bar{Q}_i(t)\big\}_{t \geq 0}$, where $\bar{Q}_i(t) \equiv 0$.
\item $\big\{\bar{Q}_{K+1}^{\sss d(N)}(t)\big\}_{t \geq 0} \dto 
\big\{\bar{Q}_{K+1}(t)\big\}_{t \geq 0}$, where $\bar{Q}_{K+1}(t)$ is given
by the Ornstein-Uhlenbeck process satisfying the following stochastic
differential equation:
\begin{equation}
\label{eq:OU process1}
\dif \bar{Q}_{K+1}(t) =
- \bar{Q}_{K+1}(t) \dif t + \sqrt{2 \lambda} \dif W(t),
\end{equation}
where $W(t)$ is the standard Brownian motion.
\item For $i \geq K+2$, $\big\{\bar{Q}_i^{\sss d(N)}(t)\big\}_{t \geq 0} \dto 
\big\{\bar{Q}_i(t)\big\}_{t \geq 0}$, where $\bar{Q}_i(t) \equiv 0$.
\end{enumerate}

\end{theorem}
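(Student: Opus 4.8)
The plan is to establish the diffusion limit first for the ordinary $\JSQ$ policy and then to transport it to the $\JSQ(d(N))$ scheme by means of the stochastic coupling developed later in the paper, which yields asymptotic equivalence on diffusion scale exactly under the hypothesis $d(N)/(\sqrt{N}\log(N))\to\infty$. Concretely, the coupling is used to show that $\sum_{i=1}^{B}|Q_i^{\sss d(N)}(t)-Q_i^{\JSQ}(t)|=\op(\sqrt{N})$ uniformly on compact time intervals, so that $\bQ_i^{\sss d(N)}$ and the analogously scaled $\JSQ$ quantities $\bQ_i^{\JSQ}$ share the same weak limit. Granting this reduction, it suffices to prove (i)--(iii) for $\JSQ$.

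For part (ii) the engine is the fact that, as long as overflow events are negligible, the total number of active tasks $U^{\JSQ}(t):=\sum_{i=1}^{B}Q_i^{\JSQ}(t)$ evolves as the occupancy of an M/M/$\infty$ system with arrival rate $\lambda(N)$ and unit per-task departure rate, \emph{irrespective of the assignment rule}. The classical diffusion approximation for M/M/$\infty$ then gives that $(U^{\JSQ}(t)-\lambda(N))/\sqrt{N}$ converges weakly to the Ornstein--Uhlenbeck process of \eqref{eq:OU process1}: the drift coefficient $-1$ arises from linearizing the net rate $\lambda(N)-U$ about the fluid value $\lambda N$, and the diffusion coefficient $\sqrt{2\lambda}$ from the sum of birth and death rates $\lambda(N)+\lambda N\approx 2\lambda N$ at the fixed point. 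That overflow is indeed negligible follows from $f<1$: I would argue that the number of pools holding $K+1$ or more tasks stays below $N$ by an $\Omega(N)$ margin, so the event that an arrival finds every pool already at level $\geq K+1$ --- the only mechanism producing a pool with $\geq K+2$ tasks, and hence an overflow when $B<\infty$ --- has probability tending to $0$ over any $[0,T]$.

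The core of the argument is the concentration statement behind (i) and (iii), namely that on diffusion scale the occupancy is carried by levels $K$ and $K+1$ alone. The upper tail (iii) uses the same $f<1$ margin: under $\JSQ$ an arrival raises a pool to level $K+2$ only when $Q_{K+1}^{\JSQ}=N$, which by the preceding paragraph almost never occurs, so $Q_i^{\JSQ}(t)=0$ for $i\geq K+2$ with high probability and $\bQ_i^{\JSQ}\dto 0$. For the lower tail (i) I would track $Y^{\JSQ}(t):=N-Q_K^{\JSQ}(t)$, the number of pools with at most $K-1$ tasks. Such pools are created by departures from level-$K$ pools at rate $\approx K(1-f)N$ and are removed by arrivals, which under $\JSQ$ are routed to a minimal pool whenever $Y^{\JSQ}>0$, at rate $\approx\lambda N$. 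Since $f>0$ yields $\lambda=K+f>K(1-f)$, a birth--death comparison shows $Y^{\JSQ}$ is positive recurrent with a geometric quasi-stationary law of parameter bounded away from $1$, whence $Y^{\JSQ}(t)=\Op(1)=\op(\sqrt{N})$; monotonicity $Q_i^{\JSQ}\geq Q_K^{\JSQ}$ for $i\leq K$ then gives $\bQ_i^{\JSQ}=(N-Q_i^{\JSQ})/\sqrt{N}\leq Y^{\JSQ}/\sqrt{N}\dto 0$. (The borderline equality $\lambda=K(1-f)$ at $f=0$ is precisely why that case is genuinely different and treated separately.) Substituting $Q_i^{\JSQ}=N+\op(\sqrt{N})$ for $i\leq K$ and $Q_i^{\JSQ}=\op(\sqrt{N})$ for $i\geq K+2$ into $U^{\JSQ}=\sum_i Q_i^{\JSQ}$ yields $Q_{K+1}^{\JSQ}=U^{\JSQ}-KN+\op(\sqrt{N})$, so that $\bQ_{K+1}^{\JSQ}=(Q_{K+1}^{\JSQ}-f(N))/\sqrt{N}=(U^{\JSQ}-\lambda(N))/\sqrt{N}+\op(1)$ inherits the Ornstein--Uhlenbeck limit, proving (ii).

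I expect the main obstacle to be the transfer step rather than the $\JSQ$ analysis, for the stability of $Y^{\JSQ}$ hinges on \emph{every} arrival reaching a minimal pool --- exactly what $\JSQ(d(N))$ cannot ensure. When only $Y=\Op(1)$ pools sit below level $K$, a single arrival samples one of them with probability merely $\approx d(N)Y/N$; balancing the creation rate $K(1-f)N$ against the thinned removal rate $\lambda N\cdot d(N)Y/N$ suggests the below-$K$ population under $\JSQ(d(N))$ grows to order $N/d(N)$, which is $\op(\sqrt{N})$ exactly when $d(N)/\sqrt{N}\to\infty$, and the extra $\log(N)$ is the cost of upgrading this to a bound that holds \emph{uniformly} over the $\Theta(N)$ arrivals in $[0,T]$ and of converting the pathwise occupancy-difference estimate of the coupling into the required $\op(\sqrt{N})$ control of $\norm{\bQ^{\sss d(N)}-\bQ^{\JSQ}}$. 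The remaining steps are standard: tightness of the scaled processes in the Skorohod space (e.g.\ via Aldous' criterion on their semimartingale decompositions) and identification of the limit through the corresponding martingale problem.
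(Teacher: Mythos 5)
Your proposal follows essentially the same route as the paper: reduce to the ordinary JSQ policy via the $\sqrt{N}$-alike coupling (Corollary~\ref{cor-diff}), identify the total number of tasks with an M/M/$\infty$ occupancy to obtain the Ornstein--Uhlenbeck limit, and control the levels below $K$ and above $K+1$ by birth--death comparisons, exactly as in Theorem~\ref{th:diffusion} and Propositions~\ref{prop:negative} and~\ref{prop:positive}. The only quantitative slip is your claim that $N-Q_K^{\JSQ}(t)=\Op(1)$ uniformly on compact time intervals: the running maximum of that positive recurrent birth--death process over the $\Theta(N)$ events occurring in $[0,T]$ is of order $\log(N)$ (this is precisely the content of Proposition~\ref{prop:positive}), not $O(1)$, though this still yields the $\op(\sqrt{N})$ bound your argument actually requires.
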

Loosely speaking, the above theorem says that, if $f > 0$
and $d(N)/(\sqrt{N} \log(N))\to\infty$ as $N\to\infty$, then over any finite time horizon,
there will only be $o_P(\sqrt{N})$ server pools with fewer than
$K$ or more than $K+1$~active tasks, and $fN+O_P(\sqrt{N})$ server pools with precisely
$K + 1$ active tasks.
Also, as long as $d(N)/(\sqrt{N} \log(N))\to\infty$ as $N\to\infty$, the JSQ$(d(N))$
scheme exhibits the same behavior as the ordinary JSQ policy (i.e., $d(N)=N$),
and thus achieves diffusion-level optimality. 
The result can be heuristically explained as follows. 
When the number of server pools with the minimum number of active tasks is $O(\sqrt{N})$, the JSQ$(d(N))$ scheme should be able to assign the incoming tasks
with high probability to one of the server pools with the minimum number of active tasks. 
To be able to select one of the $O(\sqrt{N})$ server pool out of $N$ server pools, $d(N)$ must grow faster than $\sqrt{N}$. 
Now further observe that in any finite time interval there are on average $O(N)$ arrivals, and hence it is not enough to assign the incoming task to the appropriate server pool only once. The number of times that the JSQ$(d(N))$ scheme fails to assign a task to the `appropriate' server pool in any finite time interval, should be $\op(\sqrt{N})$. This gives rise to the additional $\log( N)$ factor in the growth rate of $d(N)$.

\subsection{Diffusion-limit results for integral \texorpdfstring{$\boldsymbol{\lambda}$}{lambda}}
We now turn to the case $f = 0$, and assume that
\begin{equation}
\label{eq:f=0}
\frac{K N - \lambda(N)}{\sqrt{N}} \to \beta \in \R \quad\mbox{ as }\quad N \to \infty,
\end{equation} 
which can be thought of as an analog of the so-called Halfin-Whitt
regime~\cite{HW81}.
As mentioned above, the limiting behavior in this case is
qualitatively different from the case $f > 0$.
Hence, we now consider the following scaled quantities:
\begin{equation}\label{eq:scaling-f=0}
\begin{split}
\hQ_{K-1}^{\sss d(N)}(t)&:=  \sum_{i=1}^{K-1}\frac{ N-Q_i^{\sss d(N)}(t)}{\sqrt{N}}\geq 0,\\
\hQ_K^{\sss d(N)}(t)&:=\frac{ N-Q_K^{\sss d(N)}(t)}{\sqrt{N}}\geq 0,\\
\hQ_i^{\sss d(N)}(t)&:=\frac{Q_i^{\sss d(N)}(t)}{\sqrt{N}}\geq 0,\quad \mathrm{for}\ i\geq K+1.
\end{split}
\end{equation}

\begin{theorem}
{\normalfont (Universality of diffusion limit for JSQ$(d(N))$ scheme, $f = 0$)}
\label{th:diff pwr of d 2}
Suppose there exists $M\geq K+1$, such that $Q^{\sss d(N)}_{M+1}(0)\equiv 0$, and 
$$(\hQ^{\sss d(N)}_{K-1}(0),\hQ^{\sss d(N)}_K(0),\ldots,\hQ^{\sss d(N)}_M(0))\dto (\hQ_{K-1}(0),\hQ_K(0),\ldots,\hQ_M(0))$$ in $\R^{\sss M-K+2}$. 
If $f = 0$, $d(N)/(\sqrt{N} \log(N))\to\infty$, Equation~\eqref{eq:f=0}
is satisfied, and $\hQ^{\sss d(N)}_{K-1}(0)\pto 0$, as $N\to\infty$, then the process $\left\{\big(\hQ^{\sss d(N)}_{K-1}(t),\hQ^{\sss d(N)}_K(t),\ldots,\hQ^{\sss d(N)}_M(t),\hQ^{\sss d(N)}_{M+1}(t)\big)\right\}_{t\geq 0}$ converges weakly to the process defined as the unique solution to the stochastic integral equation
\begin{equation}\label{eq:OU process2}
\begin{split}
\hQ_K(t) &= \hQ_K(0) + \sqrt{2K} W(t) -
\int_0^t (\hQ_K(s) + K \hQ_{K+1}(s)) \dif s + \beta t + V_1(t) \\
\hQ_{K+1}(t) &= \hQ_{K+1}(0) + V_1(t) - (K + 1) \int_0^t (\hQ_{K+1}(s)-\hQ_{K+2}(s)) \dif s,\\
\hQ_{i}(t) &= \hQ_{i}(0)  - i \int_0^t (\hQ_{i}(s)-\hQ_{i+1}(s)) \dif s, \quad i= K+2.\ldots, M-1,\\
\hQ_{M}(t) &= \hQ_{M}(0) - M \int_0^t \hQ_{M}(s) \dif s,
\end{split}
\end{equation}
$\hQ_{K-1}(t)\equiv 0$, and $\hQ_{M+1}(t)\equiv 0$, where $W(t)$ is the standard Brownian motion, and $V_1(t)$ is the unique
non-decreasing process in $D_{\R_+}[0,\infty)$ satisfying
$$\int_0^t \ind{\hQ_K(s) \geq 0} \dif V_1(s) = 0.$$
\end{theorem}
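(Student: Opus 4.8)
The plan is to split the argument into a reduction step and a diffusion-limit step. First I would invoke the stochastic coupling of Section~\ref{sec:proof-equiv}, run from a common initial configuration, to reduce the JSQ$(d(N))$ scheme to the ordinary JSQ policy ($d(N)=N$): the asymptotic-equivalence estimates show that when $d(N)/(\sqrt N\log N)\to\infty$ the two coupled occupancy processes agree in every coordinate up to an $\op(\sqrt N)$ discrepancy over any finite horizon. Since every quantity in~\eqref{eq:scaling-f=0} is centred and divided by $\sqrt N$, such a discrepancy is invisible in the limit, so it suffices to prove the stated convergence for JSQ. The threshold $\sqrt N\log N$ enters precisely here: to route almost every one of the $O(N)$ arrivals in a finite interval to a pool attaining the minimum occupancy, of which there are only $O(\sqrt N)$, one needs $d(N)\gg\sqrt N$, and the extra $\log N$ factor is what forces the cumulative number of misroutings to be $\op(\sqrt N)$.

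The first substantive step for JSQ is a state-space-collapse estimate. Starting from $\hQ^N_{K-1}(0)\pto 0$, I would show that $\hQ^N_{K-1}(t)\pto 0$ uniformly on compact time sets --- i.e.\ the number of pools with at most $K-2$ tasks remains $\op(\sqrt N)$ --- and simultaneously that $Q^N_{M+1}(t)\equiv 0$ with high probability. Both facts rely on the defining feature of JSQ, that each arrival is sent to a pool with the current minimum occupancy. This pulls pools toward level $K$: a pool can fall below $K-1$ only via a departure from a pool already near the minimum, while it can climb above $M>K$ only when \emph{every} pool already holds at least $M$ tasks, an event ruled out once the lower collapse is in force. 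I would make this quantitative through a Lyapunov/supermartingale bound on $\sum_{i\le K-1}(N-Q^N_i)$, controlling its excursions on the $\sqrt N$ scale from the given initial estimate.

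With the collapse available, I would write the semimartingale decomposition of the scaled coordinates via the random-time-change (Poisson) representation and identify the limit from two inputs. First, absent overflow (guaranteed by $Q^N_{M+1}\equiv 0$), the total number of tasks $T^N=\sum_i Q^N_i$ is exactly an M/M/$\infty$ queue with arrival rate $\lambda(N)$ and unit per-task service rate, so $\hat T^N:=(KN-T^N)/\sqrt N$ converges to the Ornstein--Uhlenbeck process $\dif\hat T=(\beta-\hat T)\dif t+\sqrt{2K}\,\dif W$; this supplies the one surviving Brownian term. Second, for $i\ge K+1$ the compensated departure martingales have quadratic variation of order $1/\sqrt N$ and vanish, so $\hQ_i$ with $i\ge K+2$ satisfies the deterministic equations of~\eqref{eq:OU process2}, while $\hQ_{K+1}$ is driven by its departures together with the boundary flux of arrivals occurring when $Q^N_K=N$, whose compensator converges to the nondecreasing process $V_1$. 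Rather than compute the drift of $N-Q^N_K$ directly --- a deceptively subtle calculation, since the apparent coefficient on $\hQ_K$ is corrected by the fast dynamics of the levels below $K$ --- I would read off the $\hQ_K$-equation from the exact conservation identity $\hQ_K=\hat T+\sum_{i\ge K+1}\hQ_i$ (valid once $\hQ_{K-1}\to 0$); substituting the $\hat T$- and upper-level dynamics yields exactly the drift $-(\hQ_K+K\hQ_{K+1})+\beta$, the term $\sqrt{2K}\,\dif W$, and the reflection $V_1$. The boundary is then characterised in Skorokhod form: $\hQ_K\ge 0$, $V_1$ is nondecreasing, and $V_1$ grows only while $\hQ_K=0$.

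It remains to prove tightness, characterisation, and uniqueness. I would obtain tightness of $\{(\hQ^N_{K-1},\dots,\hQ^N_{M+1})\}_N$ from a compact-containment bound (using the collapse and the M/M/$\infty$ control) together with an Aldous oscillation estimate, and likewise tightness of the cumulative boundary flux $V^N_1$; every subsequential limit then satisfies~\eqref{eq:OU process2}. Convergence of the full sequence follows once the coupled limiting system is shown to be well posed, which I would do by exhibiting $(\hQ_{K+1},\dots,\hQ_M,V_1)$ as the image of the driving OU path under a Lipschitz (extended Skorokhod) reflection map enforcing $\hQ_K=\hat T+\sum_{i\ge K+1}\hQ_i\ge 0$. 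The main obstacle, I expect, is the joint handling of the state-space collapse and the reflecting boundary: one must show at once that excursions of the low levels below $K-1$ are $\op(\sqrt N)$ and that $V^N_1$ is tight and converges, and these two phenomena interact exactly at the instants when $Q^N_K$ is near $N$. Establishing that the multi-level Skorokhod map is single-valued and continuous, so that the limit is the unique solution of~\eqref{eq:OU process2}, is the other delicate point.
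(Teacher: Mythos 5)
Your proposal is correct in outline and follows the same two-stage architecture as the paper: reduce to the ordinary JSQ policy via the $\sqrt{N}$-alikeness coupling (Corollary~\ref{cor-diff}), establish the collapse of the levels below $K$ and above $M$, and then identify a reflected diffusion for the surviving coordinates. Where you genuinely diverge is in how the $\hQ_K$-equation is obtained. The paper (Theorem~\ref{th: f=0 diffusion}, following \cite{EG15}) works with the two-dimensional aggregate $\big(Z_1^N,Z_2^N\big)=\big(\sum_{i\le K}(N-Q_i^N),\,Q_{K+1}^N\big)$ for the truncated system $B=K+1$, writes the exact Poisson random-time-change recursion~\eqref{eq: f=0 process} with the boundary term $U_1^N$, proves an FCLT for the arrival and departure martingales separately (giving $\sqrt{K}W_1-\sqrt{K}W_2=\sqrt{2K}W$, Lemma~\ref{lem:martingale convergence}), and concludes by continuity of the two-dimensional Skorokhod reflection map (Proposition~\ref{th:continuous}); the extension to $B>K+1$ and the identity $Y^N-KN=Z_2^N-Z_1^N$ appear only afterwards, the latter as a consistency check. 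You instead propose to import the Brownian term from the M/M/$\infty$ limit of the total-task process (Theorem~\ref{th:robert-book-mmn}) and recover the $\hQ_K$-drift by differentiating the conservation identity $\hQ_K=\hat T+\sum_{i\ge K+1}\hQ_i$; the arithmetic does check out (the telescoping sum of the upper-level drifts produces exactly $-K\hQ_{K+1}-\sum_{i\ge K+1}\hQ_i$, cancelling against $+\sum_{i\ge K+1}\hQ_i$ from $\hat T$). Be aware, though, that this route does not actually spare you the boundary analysis you hoped to avoid: the equation for $\hQ_{K+1}$ already contains $V_1$, so you must still prove tightness of the cumulative flux $U_1^N/\sqrt{N}$ and the complementarity condition $\int\ind{\hQ_K(s)>0}\dif V_1(s)=0$ directly from the prelimit, exactly as in~\eqref{eq: f=0 process} --- the subtlety is relocated, not removed. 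The paper's route buys joint convergence of $(\zeta_1^N,\zeta_2^N,V_1^N)$ in one stroke via the continuous-mapping theorem, which also settles uniqueness; your route is a touch more economical in that it reuses the M/M/$\infty$ result already needed for the $f>0$ case, but it forces a separate tightness-plus-identification argument and a separate well-posedness proof for the limiting system, for which you would in any case end up constructing essentially the reflection map of Proposition~\ref{th:continuous}.
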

Unlike the $f > 0$ case, the above theorem says that, if $f = 0$,
then over any finite time horizon, there will be $O_P(\sqrt{N})$
server pools with fewer than $K$ or more than $K$~active tasks,
and hence most of the server pools have precisely $K$~active tasks.

\section{Proof Outline}\label{sec:eqiv}
The proofs of the asymptotic results for the JSQ$(d(N))$ scheme in Theorems~\ref{fluidjsqd}, \ref{th:diff pwr of d 1}, and \ref{th:diff pwr of d 2} involve two main components:
\begin{enumerate}[{\normalfont (i)}]
\item deriving the relevant limiting processes for the ordinary JSQ policy,
\item establishing a universality result which shows that the limiting processes for the JSQ$(d(N))$ scheme are `asymptotically equivalent' to those for the ordinary JSQ policy for suitably large $d(N)$.
\end{enumerate}
For Theorems~\ref{fluidjsqd}, \ref{th:diff pwr of d 1} and \ref{th:diff pwr of d 2}, part (i) will be dealt with in Theorems~\ref{th:genfluid},~\ref{th:diffusion} and~\ref{th: f=0 diffusion}, respectively. 
For all three theorems, part (ii) relies on a notion of asymptotic equivalence between different schemes, which is formalized in the next definition. 
\begin{definition}
Let $\Pi_1$ and $\Pi_2$ be two schemes parameterized by the number of server pools $N$. For any positive function $g:\N\to\R_+$, we say that $\Pi_1$ and $\Pi_2$ are `$g(N)$-alike' if there exists a common probability space, such that for any fixed $T\geq 0$, for all $i\geq 1$,
$$\sup_{t\in[0,T]}(g(N))^{-1}|Q_i^{\Pi_1}(t)-Q_i^{\Pi_2}(t)|\pto 0\quad \mathrm{as}\quad N\to\infty.$$
\end{definition}
Intuitively speaking, if two schemes are $g(N)$-alike, then in some sense, the associated system occupancy states are indistinguishable on $g(N)$-scale. 
For brevity, for two schemes $\Pi_1$ and $\Pi_2$ that are $g(N)$-alike, we will often say that $\Pi_1$ and $\Pi_2$ have the same process-level limits on $g(N)$-scale.
The next theorem states a sufficient criterion for the JSQ$(d(N))$ scheme and the ordinary JSQ policy to be $g(N)$-alike, and thus, provides the key vehicle in establishing the universality result in part (ii) mentioned above. 
\begin{theorem}\label{th:pwr of d}
Let $g:\N\to\R_+$ be a function diverging to infinity. Then the JSQ policy and the JSQ$(d(N))$ scheme are $g(N)$-alike, with $g(N)\leq N$, if 
\begin{align}\label{eq:fNalike cond1}
\mathrm{(i)}&\quad d(N)\to\infty, \quad\text{for}\quad g(N) = O(N),\\
\mathrm{(ii)}&\quad d(N)\left(\frac{N}{g(N)}\log\left(\frac{N}{g(N)}\right)\right)^{-1}\to\infty,\quad\text{for}\quad g(N)=o(N).\label{eq:fNalike cond2}
\end{align}
\end{theorem}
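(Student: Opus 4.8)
The plan is to construct an explicit stochastic coupling between the JSQ policy and the JSQ$(d(N))$ scheme on a common probability space, and then to control the discrepancy in the occupancy states via a careful bookkeeping of the arrival epochs at which the two schemes \emph{disagree}. The key structural fact driving the whole argument is that when the server pools are ordered in nondecreasing order of active tasks, the JSQ policy always sends an arriving task to the left-most (shortest) pool, whereas the JSQ$(d(N))$ scheme samples $d(N)$ pools and sends the task to the shortest among them. The coupling should be set up so that arrivals occur simultaneously under both schemes, departures are coupled as tightly as the differing occupancies allow, and the random $d(N)$-sample is generated in a way that maximizes the chance of hitting the true minimum. The crucial observation is that the two schemes make the \emph{same} assignment decision at an arrival epoch unless the $d(N)$-sample misses \emph{all} of the pools currently holding the minimum number of tasks; I would call such an arrival a ``bad event'' and the heart of the proof is to show that bad events are rare on the relevant scale.

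The key steps, in order, are as follows. First I would invoke the coupling (constructed in Section~\ref{sec:proof-equiv}, which I may assume) to obtain a pathwise bound of the form $\sup_{t\in[0,T]}|Q_i^{\JSQ}(t)-Q_i^{\sss d(N)}(t)|$ controlled by the cumulative number of bad events up to time~$T$, uniformly in~$i$; each bad arrival can move the discrepancy by at most a bounded amount, so the total discrepancy is dominated (up to constants) by the number of bad events. Second, I would estimate the probability of a bad event at a single arrival epoch. If we are trying to prove $g(N)$-alikeness, the regime of interest is the one in which the number of pools at the minimum level is at least of order $g(N)$ whenever the discrepancy is comparable to $g(N)$; for a $d(N)$-sample drawn without replacement from $N$ pools, missing all $\approx g(N)$ minimal pools has probability roughly $\big(1-g(N)/N\big)^{d(N)}\le \exp\!\big(-d(N)g(N)/N\big)$. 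Third, I would bound the total number of bad events over $[0,T]$: there are $O_P(N)$ arrivals in a finite horizon, so the expected number of bad events is of order $N\exp\!\big(-d(N)g(N)/N\big)$, and I would show this is $o_P(g(N))$ precisely under the stated growth conditions, after which an application of a maximal inequality or Markov's inequality upgrades the expectation bound to the required uniform-in-time convergence in probability.

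The two hypotheses of the theorem then arise from matching this tail estimate against the target scale. For $g(N)=O(N)$ (the fluid scale, case (i)), $g(N)/N$ is bounded below by a positive constant on the relevant portion of the trajectory, so $\exp\!\big(-d(N)g(N)/N\big)\to 0$ the moment $d(N)\to\infty$, and even $N$ times this tail is $o(g(N))=o(N)$; this is why merely $d(N)\to\infty$ suffices. For $g(N)=o(N)$ (case (ii)), one needs $N\exp\!\big(-d(N)g(N)/N\big)=o(g(N))$, i.e. $\exp\!\big(-d(N)g(N)/N\big)=o(g(N)/N)$, equivalently $d(N)g(N)/N$ must exceed $\log(N/g(N))$ by a diverging amount, which is exactly the condition $d(N)\big(\tfrac{N}{g(N)}\log(\tfrac{N}{g(N)})\big)^{-1}\to\infty$. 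Taking $g(N)=\sqrt N$ recovers the $d(N)/(\sqrt N\log N)\to\infty$ threshold of the diffusion theorems, which is a useful sanity check.

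The main obstacle I anticipate is \emph{not} the tail computation but the justification that the number of minimal pools is genuinely of order $g(N)$ (or larger) at every arrival epoch where the discrepancy matters — the naive bound breaks down in states where very few pools sit at the minimum level. Handling this requires a more delicate, state-dependent argument: one must track the two-dimensional coupling (server ordering as one coordinate, number of tasks as the other, as emphasized in the introduction), and argue that whenever the minimal level is sparsely populated, the coupled dynamics quickly either empties that level or refills it, so that bad events cannot accumulate. Making this rare-event accounting rigorous across all configurations, rather than in the favorable ``many minimal pools'' regime, and showing that the worst-case coordinate $i$ can be controlled uniformly, is where the real work lies and where the novel coupling of Section~\ref{sec:proof-equiv} must be deployed in full.
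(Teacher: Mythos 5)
Your plan founders exactly at the point you flag in your final paragraph, and the obstacle is not a technicality that a ``more delicate, state-dependent argument'' can patch within your framework: it is the reason the paper never compares the JSQ$(d(N))$ scheme to the JSQ policy directly. Your bad-event probability $(1-g(N)/N)^{d(N)}$ presumes that the set of server pools at the minimum occupancy level has size of order $g(N)$, but nothing in the dynamics guarantees this: that set can consist of a single pool (for instance, immediately after a departure creates a new, strictly smaller minimum), in which case the probability that the $d(N)$-sample misses it is $(1-1/N)^{d(N)}$, which does not vanish for any $d(N)=o(N)$. Such configurations can recur $\Theta(N)$ times per unit time, so the expected number of your bad events need not be $o(g(N))$, and the hand-wave that the coupled dynamics ``quickly empties or refills'' the minimal level is precisely the part you have not supplied and that does not follow from the coupling alone.

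The paper's resolution is to replace the state-dependent target set (pools at the minimum level) by a rank-based one of \emph{deterministic} size: the $n(N)+1$ lowest \emph{ordered} pools. It introduces the class CJSQ$(n(N))$ of schemes that always assign to one of these, and the hybrid JSQ$(n(N),d(N))$, which samples like JSQ$(d(N))$ but falls back to a uniformly chosen pool among the $n(N)+1$ lowest-ordered ones whenever the sample misses all of them. The miss probability is then exactly $\left(1-\frac{n(N)+1}{N}\right)^{d(N)}$, independent of the state, and Markov's inequality combined with Proposition~\ref{prop:stoch-ord2} (the occupancy discrepancy is at most twice the number of differing decisions) yields Proposition~\ref{prop: power of d}. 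The price is that JSQ$(n(N),d(N))$ is only a member of CJSQ$(n(N))$, not the JSQ policy itself; that gap is closed by Proposition~\ref{prop: modified JSL}, whose proof uses the T-coupling and the sandwich of Lemma~\ref{lem:majorization} to obtain the deterministic pathwise bound $\sup_t|Q_k^{\Pi_2}(t)-Q_k^{\Pi_1}(t)|\leq kn(N)$. Theorem~\ref{th:pwr of d} then follows by exhibiting an $n(N)$ satisfying $n(N)/g(N)\to 0$ and $d(N)n(N)/N-\log(N/g(N))\to\infty$ simultaneously, which is exactly what conditions (i) and (ii) permit (the paper takes $n(N)=N/\log d(N)$ and $n(N)=g(N)/\log h(N)$, respectively). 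Your tail arithmetic in the third paragraph is essentially the right computation, but it must be run with the tunable parameter $n(N)$ in place of $g(N)$; without the intermediate rank-based class there is no set of pools whose cardinality you actually control.
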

Theorem~\ref{th:pwr of d} can be intuitively explained as follows. 
The choice of $d(N)$ should be such that the JSQ$(d(N))$ scheme, at each arrival, with high probability selects one of the server pools with the minimum number of tasks, if the total number of server pools with the minimum number of tasks is of order $g(N)$. 
Moreover, in any finite time interval, the total number of times it fails to do so, should be of order lower than that of $g(N)$. 
These conditions imply that $d(N)$ must diverge if $g(N)=O(N)$, or grow faster than $(N/g(N))\log(N/g(N))$, if $g(N)=o(N)$. 

In order to obtain the fluid and diffusion limits for various schemes, the two main scales that we consider are $g(N)\sim N$ and $g(N)\sim\sqrt{N}$, respectively. 
The next two immediate corollaries of the above theorem will imply that it is enough to investigate the ordinary JSQ policy in various regimes.
\begin{corollary}\label{cor:fluid}
If $d(N)\to\infty$ as $N\to\infty$, then the JSQ$(d(N))$ scheme and the ordinary JSQ policy are $N$-alike.
\end{corollary}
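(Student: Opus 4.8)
The plan is to obtain this statement as the immediate specialization of Theorem~\ref{th:pwr of d} to the scale $g(N) = N$. First I would verify that the function $g(N) = N$ meets the standing hypotheses of Theorem~\ref{th:pwr of d}: it is positive, it diverges to infinity as $N \to \infty$, and it satisfies $g(N) \leq N$ (with equality). Since $g(N) = N$ trivially satisfies $g(N) = O(N)$, the relevant branch of the theorem is condition~\eqref{eq:fNalike cond1}, whose sole requirement is that $d(N) \to \infty$ as $N \to \infty$---precisely the hypothesis of the corollary. Theorem~\ref{th:pwr of d} then yields that the JSQ policy and the JSQ$(d(N))$ scheme are $N$-alike, which is the assertion to be proved.

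Unwinding the definition of $N$-alike makes the content transparent: on the common probability space carrying the coupling of Theorem~\ref{th:pwr of d}, for every fixed $T \geq 0$ and every $i \geq 1$,
\[
\sup_{t \in [0,T]} \frac{1}{N}\, \bigl|Q_i^{\JSQ}(t) - Q_i^{\sss d(N)}(t)\bigr| \pto 0 \quad \text{as } N \to \infty,
\]
so the fluid-scaled occupancy states of the two schemes become indistinguishable over any finite horizon. There is no genuine obstacle at the level of the corollary itself---its entire difficulty is absorbed into the coupling construction underpinning Theorem~\ref{th:pwr of d}, where the failure probability of the JSQ$(d(N))$ scheme to dispatch to a minimally-loaded server pool is controlled. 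The only point worth flagging is that $g(N) = N$ is exactly the scale on which fluid limits live, so the conclusion is precisely what is needed to transfer the fluid limit derived for the ordinary JSQ policy (part~(i) of the proof programme) to the JSQ$(d(N))$ scheme, thereby furnishing the universality assertion of Theorem~\ref{fluidjsqd}.
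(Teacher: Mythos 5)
Your proposal is correct and coincides with the paper's own reasoning: the corollary is stated there as an immediate consequence of Theorem~\ref{th:pwr of d}, obtained precisely by taking $g(N)=N$, noting that $g(N)=O(N)$ puts you in the branch of condition~\eqref{eq:fNalike cond1}, whose only requirement is $d(N)\to\infty$. Nothing further is needed.
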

\begin{remark}\label{rem:necessity-fluid}
{\normalfont
The growth condition on $d(N)$ in order for the JSQ$(d(N))$ scheme to be $N$-alike to the ordinary JSQ policy, stated in the above corollary, is not only sufficient, but also necessary.
Specifically, if $\liminf_{N\to\infty}d(N)\leq d<\infty$, then 
consider a subsequence along which the limit of $d(N)$ exists and is uniformly bounded by $d$.
Therefore, one can choose a further subsequence, such that $d(N)=d$ for all $N$ along that subsequence.
Now, from the fluid-limit result for the JSQ$(d)$ scheme~\cite{MKMG15, MMG15},
one can see that it differs from that of the JSQ policy stated in~\eqref{fluidjsqd}, and hence the JSQ(d(N)) scheme is not $N$-alike to the ordinary JSQ policy.
}
\end{remark}

\begin{corollary}\label{cor-diff}
If $d(N)/(\sqrt{N}\log (N))\to\infty$ as $N\to\infty$, then the JSQ$(d(N))$ scheme and the ordinary JSQ policy are $\sqrt{N}$-alike.
\end{corollary}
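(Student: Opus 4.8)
The plan is to obtain this as an immediate consequence of Theorem~\ref{th:pwr of d} by specializing the generic scale function to $g(N) = \sqrt{N}$. First I would observe that $g(N) = \sqrt{N}$ diverges to infinity, so that the theorem is applicable, and moreover $g(N)/N = 1/\sqrt{N} \to 0$, whence $g(N) = o(N)$; this places us in the regime governed by the sufficient condition~\eqref{eq:fNalike cond2} rather than~\eqref{eq:fNalike cond1}.

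It then remains only to check that condition~\eqref{eq:fNalike cond2}, for this particular choice of $g$, reduces to the hypothesis of the corollary. Substituting $g(N) = \sqrt{N}$ gives $N/g(N) = \sqrt{N}$, and hence $\log(N/g(N)) = \tfrac{1}{2}\log N$, so that
$$\frac{N}{g(N)}\log\left(\frac{N}{g(N)}\right) = \frac{1}{2}\sqrt{N}\log N.$$
Consequently condition~\eqref{eq:fNalike cond2} becomes $d(N)\big(\tfrac{1}{2}\sqrt{N}\log N\big)^{-1} \to \infty$, which is equivalent to $d(N)/(\sqrt{N}\log N) \to \infty$, the multiplicative constant $\tfrac{1}{2}$ being immaterial for divergence to infinity. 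Since this is precisely the assumed growth condition on $d(N)$, Theorem~\ref{th:pwr of d} yields that the JSQ$(d(N))$ scheme and the ordinary JSQ policy are $\sqrt{N}$-alike, which is the assertion of the corollary.

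Because the statement is a direct specialization of Theorem~\ref{th:pwr of d}, I do not anticipate any substantive obstacle beyond the elementary algebraic simplification carried out above. All of the analytic content — namely the two-dimensional stochastic coupling and the estimates controlling the number of times the JSQ$(d(N))$ scheme fails to route an arrival to a server pool with the minimum number of active tasks on the relevant scale — is already absorbed into the proof of Theorem~\ref{th:pwr of d}, so the corollary merely records the consequence of that result at the $\sqrt{N}$-scale used for the diffusion-limit analysis.
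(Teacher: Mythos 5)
Your proposal is correct and is precisely the argument the paper intends: Corollary~\ref{cor-diff} is presented as an immediate consequence of Theorem~\ref{th:pwr of d}, obtained by setting $g(N)=\sqrt{N}$, noting $g(N)=o(N)$, and simplifying condition~\eqref{eq:fNalike cond2} to $d(N)/(\sqrt{N}\log N)\to\infty$ exactly as you do. The algebra, including the observation that the constant $\tfrac{1}{2}$ from $\log(N/g(N))=\tfrac{1}{2}\log N$ is immaterial, is all in order.
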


We will prove the universality result in Theorem~\ref{th:pwr of d} in the next section.
The key challenge is that
a direct comparison of the JSQ$(d(N))$ scheme and the ordinary JSQ policy is not straightforward. 
Hence, to compare the JSQ$(d(N))$ scheme with the JSQ policy, we adopt a two-stage approach based on a novel class of schemes, called CJSQ$(n(N))$, as a convenient intermediate scenario. 
Specifically, for some nonnegative integer-valued sequence $\big\{n(N)\big\}_{N\geq 1}$, with $n(N)\leq N$, we introduce a class of schemes named CJSQ($n(N)$), containing all the schemes that always assign the incoming task to one of the  $n(N)+1$ lowest ordered server pools.
Note that when $n(N)=0$, the class only contains the ordinary JSQ policy. 

Just like the JSQ$(d(N))$ scheme, the schemes in the class CJSQ$(n(N))$ may be thought of as ``sloppy'' versions of the JSQ policy, in the sense that tasks are not necessarily assigned to a server pool with the minimum number of active tasks but to one of the $n(N)+1$
lowest ordered server pools, as graphically illustrated in Figure~\ref{fig:sfig1}.
Below we often will not differentiate among the various schemes in the class CJSQ$(n(N))$, and prove a common property possessed by all these schemes. Hence, with minor abuse of notation, we will often denote a typical assignment scheme in this class by CJSQ($n(N)$).
Note that the JSQ$(d(N))$ scheme is guaranteed to identify the lowest ordered server pool, but only among a randomly sampled subset of $d(N)$ server pools.
In contrast, a scheme in the class in CJSQ$(n(N))$ only guarantees that
one of the $n(N)+1$ lowest ordered server pools is selected, but 
across the entire system of $N$ server pools. 
We will show that for sufficiently small $n(N)$, any scheme from the class CJSQ$(n(N))$ is still `close' to the ordinary JSQ policy in terms of $g(N)$-alikeness as stated in the next proposition.
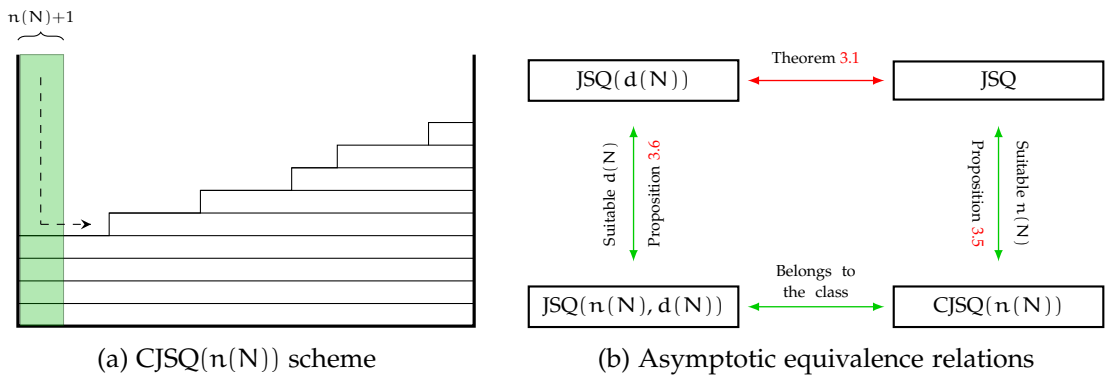
\begin{figure}
\begin{center}
\begin{subfigure}{.5\textwidth}
  \centering
  \begin{tikzpicture}[scale=.6]
\draw (1,0)--(1,2)--(3,2)--(3,2.5)--(5,2.5)--(5,3)--(7,3)--(7,3.5)--(8,3.5)--(8,4)--(10,4)--(10,4.5)--(11,4.5)--(11,0)--(1,0);

\draw  (1,.5)--(11,.5);
\draw  (1,1)--(11,1);
\draw  (1,1.5)--(11,1.5);
\draw  (1,2)--(11,2);
\draw  (3,2.5)--(11,2.5);
\draw  (5,3)--(11,3);
\draw  (7,3.5)--(11,3.5);
\draw  (10,4)--(11,4);

\draw[very thick] (1,6)--(1,0)--(11,0)--(11,6);

\draw[fill=green!70!black,opacity=0.3] (1.05,0) rectangle (2,6);
\draw[dashed] (1.5,5.5) -- (1.5, 2.25);
\draw[dashed, decoration={markings,mark=at position 1 with
    {\arrow[scale=1.2,>=stealth]{>}}},postaction={decorate}] (1.5, 2.25) -- (2.6, 2.25);

\draw [decorate,decoration={brace,amplitude=3pt},xshift=0pt,yshift=2pt]
(1,6.25) -- (2,6.25) node [black,midway,yshift=0.3cm] { $\scriptscriptstyle n(N)+1$};

\end{tikzpicture}
  \caption{CJSQ$(n(N))$ scheme\vspace{16pt}}
  \label{fig:sfig1}
\end{subfigure}%
\begin{subfigure}{.5\textwidth}
  \centering
  \begin{tikzpicture}[scale=.6]
  
  \draw(14,1) node[mybox, text width = 2.5cm,text centered]  {%
   {\scriptsize JSQ$(n(N),d(N))$}};
   
  \draw(22,1) node[mybox, text width = 2.5cm,text centered]  {%
   {\scriptsize  CJSQ$(n(N))$}};
   
   \draw(14,6) node[mybox, text width = 2.5cm,text centered]  {%
   {\scriptsize  JSQ$(d(N))$}};
   
   \draw(22,6) node[mybox, text width = 2.5cm,text centered]  {%
   {\scriptsize  JSQ}};

\draw[doublearr] (16.5,6) to (19.5,6);
\draw[doublearr2] (14,2) to (14,5);
\draw[doublearr2] (16.5,1) to (19.5,1);
\draw[doublearr2] (22,2) to (22,5);

\node  at (18,6.5) {\tiny Theorem~\ref{th:pwr of d}};

\node [rotate=270] at (21.5,3.5) {\tiny Proposition~\ref{prop: modified JSL}};
\node [rotate=270] at (22.5,3.5) {\tiny Suitable $n(N)$};

\node [rotate=90] at (14.5,3.5) {\tiny Proposition~\ref{prop: power of d}};
\node [rotate=90] at (13.5,3.5) {\tiny Suitable $d(N)$};

\draw[text width=1.1cm, align=center] (18,1.75) node {\tiny Belongs to};
\draw[text width=1.1cm, align=center] (18,1.35) node {\tiny the class};
\end{tikzpicture}
  \caption{Asymptotic equivalence relations}
  \label{fig:sfig3}
\end{subfigure}
\caption{(Left) The class CJSQ$(n(N))$ is depicted in a high-level view of the system, where as in Figure~\ref{fig:1} the server pools are arranged in nondecreasing order of the number of active tasks, and the arrival must be assigned through the left tunnel. (Right) The equivalence structure is depicted for various intermediate load balancing schemes to facilitate the comparison between the JSQ$(d(N))$ scheme and the ordinary JSQ policy.
}
\label{fig:strategy}
\end{center}
\end{figure}
\begin{proposition}\label{prop: modified JSL}
For any function $g:\N\to\R_+$ diverging to infinity, if $n(N)/ g(N)\to 0$  as $N\to\infty$, then the JSQ policy and the CJSQ$(n(N))$ schemes are $g(N)$-alike.
\end{proposition}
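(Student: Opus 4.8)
The plan is to build a single probability space carrying both the JSQ occupancy process, which I abbreviate $Q^A_i(t) := Q_i^{\JSQ}(t)$, and the occupancy process $Q^B_i(t)$ of an arbitrary scheme in the class CJSQ$(n(N))$, and to prove a pathwise bound on their discrepancy of order $n(N)$ that holds uniformly over the whole class. The defining property of any CJSQ$(n(N))$ scheme is that every task is routed to one of the $n(N)+1$ lowest ordered server pools, whereas JSQ routes to the single lowest; I therefore expect the two occupancy states to stay within $O(n(N))$ of each other for all time. Granting such a bound, say $\sup_{t\in[0,T]}|Q^A_i(t)-Q^B_i(t)|\le C\,n(N)$ for every $i$ and every realization, the conclusion is immediate: dividing by $g(N)$ and using the hypothesis $n(N)/g(N)\to 0$ gives $\sup_{t\in[0,T]}(g(N))^{-1}|Q^A_i(t)-Q^B_i(t)|\le C\,n(N)/g(N)\to 0$ deterministically, which is stronger than the required convergence in probability.

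\emph{Coupling.} First I would drive both systems by a common Poisson arrival clock of rate $\lambda(N)$, so that arrivals occur simultaneously in $A$ and $B$. The delicate ingredient is the departures: since the model has infinite-server dynamics, a pool with $k$ active tasks empties at rate $k$, so the departure rate depends on the exact occupancy and cannot be matched by simply pairing the ordered pools as in the single-server setting. I would therefore use the two-dimensional coupling highlighted in the introduction, pairing the pools by rank (the first coordinate) and, within each matched pair of ranks, coupling the departures of individual tasks (the second coordinate). Writing $X^A_{(k)}(t)$ and $X^B_{(k)}(t)$ for the occupancies of the $k$-th lowest ordered pools, I would let $\min(X^A_{(k)},X^B_{(k)})$ of the tasks in a matched pair depart together and the $|X^A_{(k)}-X^B_{(k)}|$ excess tasks depart on independent clocks, re-sorting and re-pairing after each transition. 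The synchronized departures preserve the common structure, while the excess departures occur at a rate proportional to the local mismatch and hence push the larger pool toward the smaller one.

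\emph{Discrepancy bound.} The heart of the argument is to control the total mismatch $\Delta(t):=\sum_{k=1}^N|X^A_{(k)}(t)-X^B_{(k)}(t)|$ and to observe that $|Q^A_i(t)-Q^B_i(t)|\le\#\{k:X^A_{(k)}(t)\ne X^B_{(k)}(t)\}\le\Delta(t)$ for every level $i$, so that a bound on $\Delta$ transfers to every coordinate of the occupancy vector. Under the coupling, the synchronized departures leave $\Delta$ unchanged and the excess departures can only decrease it, so $\Delta$ is non-increasing along departures; it can grow only at arrivals, and then only when $B$ routes the task to a pool strictly higher than the global minimum that $A$ fills. Because any CJSQ$(n(N))$ scheme confines such skipped assignments to the $n(N)+1$ lowest ordered pools, I would argue that the mismatch these arrivals can build up before the contractive departures dissipate it is uniformly bounded by a constant multiple of $n(N)$, giving $\sup_{t\ge 0}\Delta(t)\le C\,n(N)$.

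\emph{Main obstacle.} I expect the decisive difficulty to be precisely this last step: showing that the discrepancy-generating arrivals and the contractive departures balance so that $\Delta$ stays $O(n(N))$ rather than accumulating over the $O(N)$ arrivals occurring in $[0,T]$. In the single-server analogue the departures at ranked servers can be coupled one-for-one and the estimate is essentially combinatorial, whereas here the state-dependent service rates force the two-dimensional construction and a monovariant argument over transitions to verify that each jump leaves the bound intact. The finite-buffer case $B<\infty$ fits the same framework, with overflow events treated as suppressed arrivals, so that the identical mismatch bound also controls the difference in the cumulative overflow counts.
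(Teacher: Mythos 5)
Your overall architecture --- a common probability space, a pathwise discrepancy bound of order $n(N)$, then division by $g(N)$ --- is exactly the paper's, and your coupling (synchronized arrivals; the $\min$ of the matched occupancies departing together, the excess separately) is essentially the T-coupling of Section 4.1. But the one step that carries all the mathematical content, $\sup_{t\in[0,T]}\Delta(t)\le C\,n(N)$, is asserted rather than proved, and the mechanism you offer for it --- contractive excess departures dissipating the mismatch created at arrivals --- cannot deliver a bound of order $n(N)$. Over $[0,T]$ there are $\Theta(N)$ arrivals, and an adversarial member of the class CJSQ$(n(N))$ may differ in decision from JSQ at every one of them (e.g., always route to the $(n(N)+1)$-th lowest ordered pool), each such arrival increasing $\Delta$ by up to $2$; the excess departures remove mismatched tasks at rate at most proportional to $\Delta$ itself, so a creation/dissipation balance gives at best $\Delta=O(N)$. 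Nothing in that accounting even sees the parameter $n(N)$, so no rate argument of this type can produce the required bound.

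The paper's resolution is combinatorial, not a rate balance, and it controls a different quantity: not the column-wise mismatch $\Delta(t)=\sum_k|X^A_{(k)}(t)-X^B_{(k)}(t)|$ of the ordered stacks, but the row-wise partial sums $\sum_{i=1}^k Q_i$. Lemma~\ref{lem:majorization} shows, by forward induction on event times along the coupled paths, that $\sum_{i=1}^k Q_i^{\JSQ}(t)-kn(N)\le\sum_{i=1}^k Q_i^{\mathrm{CJSQ}}(t)\le\sum_{i=1}^k Q_i^{\JSQ}(t)$ for every $k$ and $t$. The right inequality holds because whenever CJSQ increments the partial sum there is room for JSQ to do so as well; the left holds because the instant the gap attains its critical value $kn(N)$ one has $Q_k^{\mathrm{CJSQ}}\le N-n(N)$, and the defining constraint of the class --- the task must join one of the $n(N)+1$ lowest ordered pools --- then forces the CJSQ arrival below level $k$, so the gap cannot widen; the departure case relies on the matched removal of the $m$-th smallest ``red'' and ``blue'' task indices in the T-coupling, a matching your independent excess clocks would destroy. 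Telescoping the sandwich in $k$ gives $|Q_k^{\JSQ}(t)-Q_k^{\mathrm{CJSQ}}(t)|\le kn(N)$ surely and uniformly in $t$, which is the bound you need. Your observation $|Q_i^A-Q_i^B|\le\#\{k:X^A_{(k)}\ne X^B_{(k)}\}$ is correct but aims at the wrong monovariant: comparing the vertical stacks is precisely the majorization-type comparison that Remark~\ref{rem:novelty} explains is too weak (and in part reversed) for this purpose.
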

In order to prove this proposition, we introduce in Section~\ref{sec:stoch-coupling} a novel stochastic coupling called the T-coupling, to construct a common probability space, and establish the property of $g(N)$-alikeness.\\

Next we compare the CJSQ($n(N)$) schemes with the JSQ($d(N)$) scheme. 
The comparison follows a somewhat similar line of argument as 
in \cite[Section~4]{MBLW16-3}, and involves a JSQ$(n(N),d(N))$ scheme
which is an intermediate blend between the CJSQ$(n(N))$ schemes and the
JSQ$(d(N))$ scheme.
Specifically, the JSQ$(n(N),d(N))$ scheme selects a candidate server pool in the exact same way as the JSQ$(d(N))$ scheme.
However, it only assigns the task to that server pool if it belongs to 
the $n(N)+1$ lowest ordered ones,
and to a randomly selected server pool among these otherwise.
By construction, the JSQ$(n(N),d(N))$ scheme belongs to the class CJSQ$(n(N))$.

We now consider two T-coupled systems with a JSQ$(d(N))$
and a JSQ$(n(N),d(N))$ scheme.
Assume that at some specific arrival epoch, the incoming task is assigned to the $k^{\mathrm{th}}$ ordered server pool in the system under the JSQ($d(N)$) scheme. 
If $k\in\big\{1,2,\ldots,n(N)+1\big\}$, then the scheme JSQ$(n(N),d(N))$ also assigns the arriving task to the $k^{\mathrm{th}}$ ordered server pool. 
Otherwise it dispatches the arriving task uniformly at random among the first $n(N)+1$ ordered server pools.

We will establish a sufficient criterion on $d(N)$ in order for the JSQ$(d(N))$ scheme and JSQ$(n(N),d(N))$ scheme to be close in terms of $g(N)$-alikeness, as stated in the next proposition.
\begin{proposition}\label{prop: power of d}
Assume, $n(N)/g(N)\to 0$ as $N\to\infty$ for some function $g:\N\to\R_+$ diverging to infinity. The JSQ$(n(N),d(N))$ scheme and the JSQ($d(N)$) scheme are $g(N)$-alike if the following condition holds:
\begin{equation}\label{eq:condition-same}
\frac{n(N)}{N}d(N)-\log\frac{N}{g(N)}\to\infty, \quad\text{as}\quad N\to\infty.
\end{equation}
\end{proposition}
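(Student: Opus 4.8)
The plan is to run both schemes on a single probability space using the coupling framework of Section~\ref{sec:stoch-coupling}, driving them by a common Poisson arrival clock, a common choice of the $d(N)$ randomly sampled server pools at each arrival, and coupled departures. Since the JSQ$(n(N),d(N))$ scheme selects its candidate server pool in exactly the same way as the JSQ$(d(N))$ scheme, the two schemes make an identical assignment at every arrival \emph{except} at those epochs where the candidate --- the least-loaded pool among the $d(N)$ sampled ones --- fails to lie among the $n(N)+1$ lowest ordered pools; call these \emph{discrepancy epochs}. The first step is to invoke the monotonicity property of the coupling to show that the occupancy states remain controlled by the cumulative number of discrepancy epochs, that is, for a fixed $T$ there is a constant $c$ with
$$\sup_{t\in[0,T]}\sum_{i\geq1}|Q_i^{\Pi_1}(t)-Q_i^{\Pi_2}(t)|\leq c\,\Delta_N(T),$$
where $\Delta_N(T)$ denotes the number of discrepancy epochs in $[0,T]$. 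The point is that a non-discrepancy arrival and every coupled departure leave the accumulated distance unchanged, while each discrepancy arrival can inflate it by at most a bounded amount.

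Next I would estimate $\Delta_N(T)$. The key observation is that, because the $d(N)$ sampled pools are drawn uniformly and the ordering merely relabels the pools, the conditional probability of a discrepancy at any given arrival is, \emph{regardless of the current occupancy state}, at most
$$p_N:=\Big(1-\tfrac{n(N)+1}{N}\Big)^{d(N)}\leq \exp\!\Big(-\tfrac{n(N)d(N)}{N}\Big),$$
this being the probability that none of the $d(N)$ sampled pools lies among the $n(N)+1$ lowest ordered ones. Writing $A_N(T)$ for the number of arrivals in $[0,T]$, which is Poisson with mean $\lambda(N)T$, conditioning on $A_N(T)$ and using the state-independence of $p_N$ gives $\E{\Delta_N(T)}\leq \lambda(N)T\,p_N$.

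A Markov-inequality argument then finishes the proof. For any $\varepsilon>0$,
$$\Pro{\Delta_N(T)>\varepsilon g(N)}\leq\frac{\E{\Delta_N(T)}}{\varepsilon g(N)}\leq \frac{\lambda(N)T}{\varepsilon g(N)}\exp\!\Big(-\tfrac{n(N)d(N)}{N}\Big),$$
and since $\lambda(N)/N\to\lambda$ the logarithm of the right-hand side equals, up to an $O(1)$ term, $\log(N/g(N))-n(N)d(N)/N$, which tends to $-\infty$ precisely under hypothesis~\eqref{eq:condition-same}. Hence $\Delta_N(T)/g(N)\pto0$, and combined with the coupling bound of the first step this yields $\sup_{t\in[0,T]}(g(N))^{-1}|Q_i^{\Pi_1}(t)-Q_i^{\Pi_2}(t)|\pto0$ for every $i\geq1$, i.e.\ the two schemes are $g(N)$-alike. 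Note that the assumption $n(N)/g(N)\to0$ is not needed for this comparison itself, but enters when chaining with Proposition~\ref{prop: modified JSL}.

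The main obstacle is the first step rather than the routine probabilistic estimates: one must verify through the coupling that a single misdirected task cannot cause the two occupancy processes to drift apart in an uncontrolled way over time. Unlike the single-server setting, the departure rate at an ordered server pool depends on its exact number of active tasks, so departures cannot be coupled by merely matching ordered pools; the two-dimensional structure of the T-coupling --- the server ordering as one coordinate and the task count as the other --- is what guarantees the distance does not increase between discrepancy epochs. Establishing this monotonicity is where the genuine work lies, and it is precisely the property the coupling construction is designed to deliver.
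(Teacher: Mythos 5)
Your proposal is correct and follows essentially the same route as the paper: reduce to bounding the cumulative number of decision discrepancies via the T-coupling inequality of Proposition~\ref{prop:stoch-ord2}, note that a discrepancy requires that none of the $n(N)+1$ lowest ordered pools be sampled (probability $(1-(n(N)+1)/N)^{d(N)}$, independent of the occupancy state), and conclude by Markov's inequality under condition~\eqref{eq:condition-same}. The only cosmetic difference is that you bound $\E{\Delta_N(T)}$ directly using the Poisson mean of $A_N(T)$ whereas the paper conditions on $A_N(T)$ and invokes tightness of $A^N(T)/N$; the "monotonicity" you flag as the real work is exactly the content of Proposition~\ref{prop:stoch-ord2}, already established.
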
 
Finally, Proposition~\ref{prop: power of d} in conjunction with Proposition~\ref{prop: modified JSL} yields Theorem~\ref{th:pwr of d}.
The overall proof strategy as described above, is schematically represented in Figure~\ref{fig:sfig3}.

\begin{remark} \normalfont
Note that, sampling \emph{without} replacement polls more server pools than \emph{with} replacement, and hence the minimum number of active tasks among the selected server pools is stochastically smaller in the case without replacement.
As a result, for sufficient conditions as in Theorem~\ref{th:pwr of d} it is enough to consider sampling with replacement.
\end{remark}

\section{Universality Property}
\label{sec:proof-equiv}

In this section we formalize the proof outlined in the previous section.
In Subsection~\ref{sec:stoch-coupling} we first introduce the T-coupling between any two task assignment schemes.
This coupling is used to derive stochastic inequalities in Subsection~\ref{ssec:stoch-ineq}, stated as Proposition~\ref{prop:stoch-ord2} and Lemma~\ref{lem:majorization}, which
in turn,  are used to prove Propositions~\ref{prop: modified JSL},~\ref{prop: power of d} and Theorem~\ref{th:pwr of d} in Subsection~\ref{ssec:asymp-eq}.
\subsection{Stochastic coupling}\label{sec:stoch-coupling}
Throughout this subsection we fix $N$, and suppress the superscript $N$ in the notation. Let $Q_{i}^{\Pi_1}(t)$ and $Q_{i}^{\Pi_2}(t)$ denote the number of server pools with at least $i$ active tasks, at time $t$, in two systems following schemes $\Pi_1$ and $\Pi_2$, respectively.
With a slight abuse of terminology, we occasionally use $\Pi_1$ and $\Pi_2$ to refer to systems following schemes $\Pi_1$ and $\Pi_2$, respectively.
To couple the two systems, we synchronize the arrival epochs 
and maintain a single exponential departure clock with instantaneous rate at time $t$ given by $M(t):=\max\left\{\sum_{i=1}^BQ_{i}^{\Pi_1}(t), \sum_{i=1}^BQ_{i}^{\Pi_2}(t)\right\}$.
 We couple the arrivals and departures in the various server pools as follows:

(1) \emph{Arrival:} At each arrival epoch, assign the incoming task in each system to one of the server pools according to the respective schemes.

(2) \emph{Departure:} 
Define
$$H(t):=\sum_{i=1}^{B}\min\left\{Q_{i}^{\Pi_1}(t), Q_{i}^{\Pi_2}(t)\right\}$$
and
$$p(t):=
\begin{cases}
\dfrac{H(t)}{M(t)},&\quad\text{if}\quad M(t)>0,\\
0,&\quad \text{otherwise.}
\end{cases}
$$
At each departure epoch $t_k$ (say), draw a uniform$[0,1]$ random variable $U(t_k)$. 
The departures occur in a coupled way based upon the value of $U(t_k)$. In either of the systems, assign a task index $(i,j)$, if that task is at the $j^{\mathrm{th}}$ position of the $i^{\mathrm{th}}$ ordered server pool. 
Let $\mathcal{A}_1(t)$ and $\mathcal{A}_2(t)$ denote the set of all task-indices present at time $t$ in systems $\Pi_1$ and $\Pi_2$, respectively. 
Color the indices (or tasks) in $\mathcal{A}_1\cap\mathcal{A}_2$, $\mathcal{A}_1\setminus\mathcal{A}_2$ and $\mathcal{A}_2\setminus\mathcal{A}_1$, green, blue and red, respectively,
and note that $|\mathcal{A}_1\cap \mathcal{A}_2|=H(t)$. 
Define a total order on the set of indices as follows:
$(i_1,j_1)<(i_2,j_2)$ if $i_1<i_2$, or $i_1=i_2$ and $j_1<j_2$.  Now, if $U(t_k)\leq p(t_k-)$, then select one green index uniformly at random and remove the corresponding tasks from both systems. Otherwise, if $U(t_k)> p(t_k-)$, then choose one integer $m$, 
uniformly at random from all the integers between 1 and  $M(t)-H(t)=M(t)(1-p(t))$, and remove the tasks corresponding to the $m^{\mathrm{th}}$ smallest (according to the order defined above) red and blue indices in the corresponding systems.
If the number of red (or blue) tasks is less than $m$, then do nothing.

\begin{figure}\label{fig:2}
\begin{center}
\begin{tikzpicture}[scale=.70]
\foreach \x in {10, 9,...,1}
	\draw (\x,6)--(\x,0)--(\x+.7,0)--(\x+.7,6);
\foreach \x in {10, 9,...,1}
	\draw (\x+.35,-.45) node[circle,inner sep=0pt, minimum size=10pt,draw,thick, teal] {{{\tiny $\mathsmaller{\x}$}}} ;
\foreach \y in {0, .5}
	\draw[fill=BlueViolet,BlueViolet] (1.15,.1+\y) rectangle (1.55,.5+\y);
	
\foreach \y in {0, .5}
	\draw[fill=green!70!black,green!70!black] (2.15,.1+\y) rectangle (2.55,.5+\y);
\foreach \y in {1, 1.5}
	\draw[fill=BrickRed,BrickRed] (2.15,.1+\y) rectangle (2.55,.5+\y);

\foreach \y in {0, .5, 1, 1.5}
	\draw[fill=green!70!black,green!70!black] (3.15,.1+\y) rectangle (3.55,.5+\y);

\foreach \y in {0, .5, 1, 1.5, 2}
	\draw[fill=green!70!black,green!70!black] (4.15,.1+\y) rectangle (4.55,.5+\y);
\foreach \y in {2.5}
	\draw[fill=BlueViolet,BlueViolet] (4.15,.1+\y) rectangle (4.55,.5+\y);

\foreach \y in {0, .5, 1, 1.5, 2}
	\draw[fill=green!70!black,green!70!black] (5.15,.1+\y) rectangle (5.55,.5+\y);
\foreach \y in {2.5, 3}
	\draw[fill=BlueViolet,BlueViolet] (5.15,.1+\y) rectangle (5.55,.5+\y);

\foreach \y in {0, .5, 1, 1.5, 2, 2.5, 3, 3.5, 4}
	\draw[fill=green!70!black,green!70!black] (6.15,.1+\y) rectangle (6.55,.5+\y);
\foreach \y in {0, .5, 1, 1.5, 2, 2.5, 3, 3.5, 4}
	\draw[fill=green!70!black,green!70!black] (7.15,.1+\y) rectangle (7.55,.5+\y);

\foreach \y in {0, .5, 1, 1.5, 2, 2.5, 3, 3.5, 4, 4.5}
	\draw[fill=green!70!black,green!70!black] (8.15,.1+\y) rectangle (8.55,.5+\y);
\foreach \y in {5}
	\draw[fill=BrickRed,BrickRed] (8.15,.1+\y) rectangle (8.55,.5+\y);

\foreach \y in {0, .5, 1, 1.5, 2, 2.5, 3, 3.5, 4, 4.5}
	\draw[fill=green!70!black,green!70!black] (9.15,.1+\y) rectangle (9.55,.5+\y);
\foreach \y in { 5}
	\draw[fill=BrickRed,BrickRed] (9.15,.1+\y) rectangle (9.55,.5+\y);	
	
\foreach \y in {0, .5, 1, 1.5, 2, 2.5, 3, 3.5, 4, 4.5}
	\draw[fill=green!70!black,green!70!black] (10.15,.1+\y) rectangle (10.55,.5+\y);
\foreach \y in {5}
	\draw[fill=BrickRed,BrickRed] (10.15,.1+\y) rectangle (10.55,.5+\y);

\draw (1.35,.3) node[white] {\scriptsize 1};
\draw (1.35,.8) node[white] {\scriptsize 2};
\draw (2.35,1.3) node[white] {\scriptsize 1};
\draw (2.35,1.8) node[white] {\scriptsize 2};
\draw (4.35,2.8) node[white] {\scriptsize 3};
\draw (5.35,2.8) node[white] {\scriptsize 4};
\draw (5.35,3.3) node[white] {\scriptsize 5};
\draw (8.35,5.3) node[white] {\scriptsize 3};
\draw (9.35,5.3) node[white] {\scriptsize 4};
\draw (10.35,5.3) node[white] {\scriptsize 5};

\end{tikzpicture}
\end{center}
\caption{Superposition of the occupancy states at sgreenome particular time instant, of  schemes $\Pi_1$ and $\Pi_2$ when the server pools in both systems are arranged in nondecreasing order of the number of active tasks. The $\Pi_1$ system is the union of the green and blue tasks, and the $\Pi_2$ system is the union of the green and red tasks.}
\label{fig:modified jsl}
\end{figure}
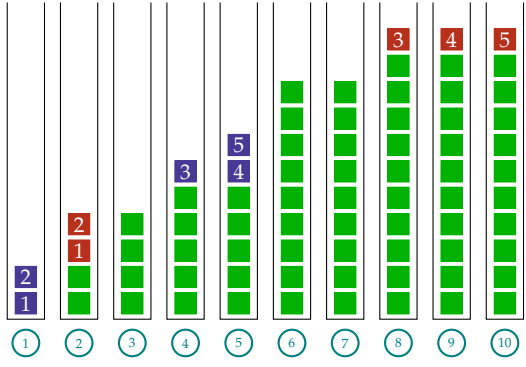

The above coupling has been schematically represented in Figure~\ref{fig:modified jsl},
and will henceforth be referred to as T-coupling, where T stands for `task-based'. 
Now we need to show that, under the T-coupling, the two systems, considered independently, evolve according to their own statistical laws. 
This can be seen in several steps.
Indeed, the T-coupling basically uniformizes the departure rate by the maximum number of tasks present in either of the two systems. 
Then informally speaking, the green regions signifies the common portion of tasks, and the red and  blue region represent the separate contributions. 
Now observe that
\begin{enumerate}[{\normalfont (i)}]
\item The total departure rate from $\Pi_i$ is 
\begin{align*}
&M(t)\left[p(t)+(1-p(t))\frac{|\mathcal{A}_i\setminus\mathcal{A}_{3-i}|}{M(t)-H(t)}\right]
= |\mathcal{A}_1\cap\mathcal{A}_2|+|\mathcal{A}_i\setminus\mathcal{A}_{3-i}|
=|\mathcal{A}_i|, \quad i= 1,2.
\end{align*}
\item Assuming without loss of generality~$|\mathcal{A}_1|\geq |\mathcal{A}_2|$, each task in $\Pi_1$ is equally likely to depart.
\item Each task in $\Pi_2$ within $\mathcal{A}_1\cap\mathcal{A}_2$ and 
each task within $\mathcal{A}_2\setminus\mathcal{A}_1$ is equally likely to depart, and the probabilities of departures are proportional to $|\mathcal{A}_1\cap\mathcal{A}_2|$ and $|\mathcal{A}_2\setminus\mathcal{A}_1|$, respectively.
\end{enumerate}

\subsection{Stochastic inequalities}\label{ssec:stoch-ineq}

Now, as in \cite{MBLW16-3} we define a notion of comparison between two T-coupled systems. 
Two T-coupled systems are said to~\emph{differ in decision} at some arrival epoch, if the index of the ordered server pool joined by the arriving task at that epoch, differs in the two systems.
Denote by $\Delta_{\Pi_1,\Pi_2}(t)$, the cumulative number of times that the two systems~$\Pi_1$ and~$\Pi_2$ differ in decision up to time $t$. 

\begin{proposition}\label{prop:stoch-ord2}
For two T-coupled systems under any two schemes $\Pi_1$ and $\Pi_2$ the following inequality is preserved
\begin{equation}\label{eq:stoch-ord2}
\sum_{i=1}^B \big|Q_i^{\Pi_1}(t)-Q_i^{\Pi_2}(t)\big|\leq 2\Delta_{\Pi_1,\Pi_2}(t)\qquad\forall\ t\geq 0,
\end{equation} 
provided the two systems start from the same occupancy state at time $t=0$.
\end{proposition}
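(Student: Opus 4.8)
The plan is to track the quantity $\sum_{i=1}^B |Q_i^{\Pi_1}(t) - Q_i^{\Pi_2}(t)|$ over time and show that under the T-coupling it increases by at most $2$ at arrival epochs where the two systems differ in decision, and never increases at departure epochs or at synchronized arrivals where the decisions coincide. Since the systems start from the same occupancy state, $\sum_i |Q_i^{\Pi_1}(0) - Q_i^{\Pi_2}(0)| = 0 = 2\Delta_{\Pi_1,\Pi_2}(0)$, so the claimed inequality will follow by induction over the jump epochs of the coupled process, with the left side bounded by twice the accumulated number of decision-differences.

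First I would handle the arrival epochs. At a synchronized arrival, both systems receive one task. If the two systems assign it to the same ordered server pool index $k$, then (since the coupling orders pools by occupancy) the increments to the respective occupancy vectors $\QQ^{\Pi_1}$ and $\QQ^{\Pi_2}$ are identical in the sense relevant to the differences $|Q_i^{\Pi_1} - Q_i^{\Pi_2}|$, so the sum on the left of~\eqref{eq:stoch-ord2} is unchanged and $\Delta_{\Pi_1,\Pi_2}$ does not increment. If the assignments differ in decision, then the left side can increase, but each system adds exactly one task, affecting at most a contiguous block of the $Q_i$ coordinates; a single task arrival changes $\sum_i |Q_i^{\Pi_1} - Q_i^{\Pi_2}|$ by at most $2$ (the arrival raising one coordinate in each system), and simultaneously $\Delta_{\Pi_1,\Pi_2}$ increments by $1$, preserving the inequality. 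This is the bookkeeping that produces the factor $2$.

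Next I would handle departure epochs, where the crux is that the T-coupling is engineered so that departures \emph{never increase} the discrepancy. When $U(t_k)\leq p(t_k-)$ a green (common) task departs from both systems, removing the same task-index from each, so each $|Q_i^{\Pi_1}-Q_i^{\Pi_2}|$ is unaffected or the common support shrinks symmetrically; the sum does not increase. When $U(t_k)>p(t_k-)$, the coupling removes the $m^{\mathrm{th}}$ smallest red and blue indices, i.e., one task from the $\Pi_2$-only set and one from the $\Pi_1$-only set, chosen at matching positions in the total order on indices. Because these removals peel off one blue and one red task at corresponding ordered positions, they act to \emph{reduce} the symmetric difference $\mathcal{A}_1 \triangle \mathcal{A}_2$, and I would verify that the induced change in $\sum_i |Q_i^{\Pi_1} - Q_i^{\Pi_2}|$ is nonpositive. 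The case where there are fewer than $m$ red or blue tasks results in no removal and hence no change.

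\textbf{The main obstacle} will be the departure analysis under $U(t_k)>p(t_k-)$: I must show carefully that removing matched red and blue indices, positioned by the total order $(i,j)$ that first sorts by pool rank and then by within-pool height, does not inadvertently \emph{increase} some coordinate difference $|Q_i^{\Pi_1}-Q_i^{\Pi_2}|$ even while decreasing others. The delicate point is that a departure from a high-occupancy pool in one system versus a low-occupancy pool in the other can shift which coordinate indices $i$ are affected, so I would argue at the level of the task-index sets rather than the aggregated $Q_i$ counts, using the ordering to guarantee the blue and red removals are aligned so that the net effect on the partial sums $\sum_{i\geq \ell}(\cdots)$ is monotone. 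Establishing this coordinate-wise monotonicity of the departure step is where essentially all the work lies; the arrival step and the final induction are comparatively routine.
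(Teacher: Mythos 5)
Your overall skeleton is the same as the paper's: induction over event epochs, with arrivals that differ in decision contributing at most $2$ to the left side while incrementing $\Delta_{\Pi_1,\Pi_2}$ by one, and all other events leaving the left side non-increasing. The arrival-with-differing-decision step and the red/blue departure step are essentially fine. But there is a genuine gap in the two cases you dismiss as trivial, and it is the same gap in both.

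When the two systems assign the arriving task to the \emph{same ordered pool index} $k$, you claim the increments to $\QQ^{\Pi_1}$ and $\QQ^{\Pi_2}$ are ``identical in the sense relevant to the differences,'' so that each $|Q_i^{\Pi_1}-Q_i^{\Pi_2}|$ is untouched. That is false. The coordinate that gets incremented is $I_{\Pi}(k)+1$, where $I_\Pi(k)=\max\{i:Q_i^\Pi\geq N-k+1\}$ is the current occupancy of the $k^{\mathrm{th}}$ ordered pool, and this can differ between the two systems: if $I_{\Pi_1}(k)<I_{\Pi_2}(k)$, system $\Pi_1$ increments $Q^{\Pi_1}_{I_{\Pi_1}(k)+1}$ while system $\Pi_2$ increments $Q^{\Pi_2}_{I_{\Pi_2}(k)+1}$ — two \emph{different} coordinates. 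The sum is nevertheless conserved, but only because of a cancellation you have not supplied: the ordering of the pools forces $Q_{I_{\Pi_1}(k)+1}^{\Pi_1}<Q_{I_{\Pi_1}(k)+1}^{\Pi_2}$ and $Q_{I_{\Pi_2}(k)+1}^{\Pi_1}<Q_{I_{\Pi_2}(k)+1}^{\Pi_2}$, so one absolute difference drops by one exactly as the other rises by one. The identical issue recurs at green departures: a green task has a single index $(i,j)$, so it leaves pool $i$ in both systems, but the decremented coordinates $I_{\Pi_1}(i)$ and $I_{\Pi_2}(i)$ can again differ, and an individual term $|Q_{I_{\Pi_1}(i)}^{\Pi_1}-Q_{I_{\Pi_1}(i)}^{\Pi_2}|$ can strictly \emph{increase} by one; only the compensating decrease at coordinate $I_{\Pi_2}(i)$ saves the sum. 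Your write-up asserts these terms are ``unaffected'' or ``shrink symmetrically,'' which an examiner would reject. You correctly anticipate that shifting coordinate indices is the delicate point, but you locate the difficulty only in the red/blue departure case — which is actually the easy case, since there the removed blue (resp.\ red) task sits strictly above the other system's copy of that pool, so the affected term is of the form $(Q_{i_1}^{\Pi_1}-Q_{i_1}^{\Pi_2})^+>0$ and can only decrease. The crossing argument you are missing is needed precisely in the two cases you declared routine.
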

The proof follows a somewhat similar line of argument as in \cite{MBLW16-3, MBLW15}, but is provided below since the coupling is different here. 
For any scheme $\Pi$, define $I_\Pi(c):=\max\big\{i:Q_i^\Pi\geq N-c+1\big\}$, $c=1,\ldots,N$.
\begin{proof}[Proof of Proposition~\ref{prop:stoch-ord2}]
We use forward induction on event times, i.e., time epochs when either an arrival or a departure takes place. 
Assume the inequality in~\eqref{eq:stoch-ord2} holds at time epoch $t_0$. 
We 
denote by $\tilde{Q}^{\Pi}$ the updated occupancy state after the next event at time epoch $t_1$, and
 distinguish between two cases depending on whether $t_1$ is an arrival epoch or a departure epoch.

If $t_1$ is an arrival epoch and
 if the systems differ in decision, then observe that the left side of \eqref{eq:stoch-ord2} can increase at most by two. In this case, the right side also increases by two, and the ordering is preserved.
Therefore, it is enough to prove that the right side of \eqref{eq:stoch-ord2} remains unchanged if the two systems do not differ in decision. In that case, assume that both $\Pi_1$ and $\Pi_2$ assign the arriving task to the $k^{\mathrm{th}}$ ordered server pool.
Then
\begin{equation}\label{eq:addition}
\tilde{Q}^{\Pi}_i=
\begin{cases}
Q^{\Pi}_i+1, &\mbox{ for }i=I_{\Pi}(k)+1,\\
Q^{\Pi}_j,&\mbox{ otherwise, }
\end{cases}
\end{equation}
if $I_{\Pi}(k)<B$; otherwise all the $Q_i$-values remain unchanged. 
If $I_{\sss\Pi_1}(k)=I_{\sss\Pi_2}(k)$, then the left side of \eqref{eq:stoch-ord2} clearly remains unchanged. Now, without loss of generality, assume $I_{\sss\Pi_1}(k)<I_{\sss\Pi_2}(k)$. Therefore, 
$$Q_{ I_{\Pi_1}(k)+1}^{\Pi_1}(t_0)< Q_{ I_{\Pi_1}(k)+1}^{\Pi_2}(t_0)\quad \mathrm{and}\quad Q_{ I_{\Pi_2}(k)+1}^{\Pi_1}(t_0)<Q_{ I_{\Pi_2}(k)+1}^{\Pi_2}(t_0).$$
After an arrival, the $ (I_{\Pi_1}(k)+1)^{\mathrm{th}}$ term in the left side of \eqref{eq:stoch-ord2} decreases by one, and the $ (I_{\Pi_2}(k)+1)^{\mathrm{th}}$ term increases by one. Thus the inequality is preserved.

If $t_1$ is a departure epoch, then first consider the case when the departure occurs from the green region. In that case, without loss of generality, assume that a potential departure occurs from the $k^{\mathrm{th}}$ ordered server pool, for some $k\in\big\{1,2,\ldots,N\big\}.$ Also note that a departure in either of the two systems can change at most one of the $Q_i$-values.
Thus
\begin{equation}\label{eq:removal}
\tilde{Q}^{\Pi}_i=
\begin{cases}
Q^{\Pi}_i-1, &\mbox{ for }i=I_{\Pi}(k),\\
Q^{\Pi}_j,&\mbox{ otherwise, }
\end{cases}
\end{equation}
if $I_{\Pi}(k)\geq 1$; otherwise all the $Q_i$-values remain unchanged.

If at time epoch $t_0$, $I_{\Pi_1}(k)=I_{\Pi_2}(k)=I$, then both $Q_{\sss I^{\Pi_1}}$ and $Q_{\sss I^{\Pi_2}}$ decrease by one, and hence the left side of \eqref{eq:stoch-ord2} does not change. 

Otherwise, without loss of generality assume $I_{\Pi_1}(k)<I_{\Pi_2}(k).$ Then observe that 
$$Q_{ I_{\Pi_1}(k)}^{\Pi_1}(t_0)\leq Q_{ I_{\Pi_1}(k)}^{\Pi_2}(t_0)\quad \mathrm{and}\quad Q_{ I_{\Pi_2}(k)}^{\Pi_1}(t_0)<Q_{ I_{\Pi_2}(k)}^{\Pi_2}(t_0).$$ 
Furthermore, after the departure, $Q_{\sss I_{\Pi_1}(k)}^{\Pi_1}$ decreases by one, therefore $|Q_{\sss I_{\Pi_1}(k)}^{\Pi_1}- Q_{\sss I_{\Pi_1}(k)}^{\Pi_2}|$ increases by one, and $Q_{\sss I_{\Pi_2}(k)}^{\Pi_2}$ decreases by one, thus $|Q_{\sss I_{\Pi_2}(k)}^{\Pi_1}- Q_{I_{\sss \Pi_2}(k)}^{\Pi_2}|$ decreases by one. Hence, in total, the left side of \eqref{eq:stoch-ord2} remains the same.
Now if a departure occurs from the blue and/or red region, then for some $i_1$ and/or $i_2$, $(Q_{i_1}^{\Pi_1}-Q_{i_1}^{\Pi_2})^+$ or $(Q_{i_2}^{\Pi_2}-Q_{i_2}^{\Pi_1})^+$ (or both) decreases, and the other terms remain unchanged, and hence the left side clearly decreases or remains unchanged.
\end{proof}

In order to compare the JSQ policy with the CJSQ(n(N)) schemes,
denote by $Q_i^{\Pi_1}(t)$ and $Q_i^{\Pi_2}(t)$ the number of server pools with at least $i$ tasks under the JSQ policy and CJSQ$(n(N))$ scheme, respectively.
 Now, in order to prove Proposition~\ref{prop: modified JSL}, we will need the following lemma.

\begin{lemma}\label{lem:majorization}
For any $k\in\big\{1,2,\ldots B\big\}$, 
\begin{equation}\label{eq:majorization}
\left\{\sum_{i=1}^k Q_i^{\Pi_1}(t)-kn(N)\right\}_{t\geq 0}\leq_{st} \left\{\sum_{i=1}^k Q_i^{\Pi_2}(t)\right\}_{t\geq 0}\leq_{st}\left\{\sum_{i=1}^k Q_i^{\Pi_1}(t)\right\}_{t\geq 0}, 
\end{equation}
provided at $t=0$ the two systems start from the same occupancy states.
\end{lemma}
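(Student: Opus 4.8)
The plan is to prove the two stochastic inequalities in~\eqref{eq:majorization} separately, where $\Pi_1$ denotes the JSQ policy and $\Pi_2$ a generic scheme in the class CJSQ$(n(N))$. The natural approach is a sample-path comparison under the T-coupling introduced in Section~\ref{sec:stoch-coupling}, combined with forward induction on event times, exactly in the spirit of the proof of Proposition~\ref{prop:stoch-ord2}. The key quantity to track is the partial sum $\sum_{i=1}^k Q_i^\Pi(t)$, which equals the total number of tasks occupying the \emph{lowest $k$ positions} across all server pools (equivalently, $\min\{Q_i^\Pi,\text{width at level}\}$ summed over the first $k$ levels). The central structural fact I would exploit is that assigning an incoming task to the $m^{\mathrm{th}}$ ordered server pool increases $\sum_{i=1}^k Q_i^\Pi$ by exactly one if that server pool currently has fewer than $k$ tasks, and leaves it unchanged otherwise; departures behave symmetrically.

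For the upper bound $\sum_{i=1}^k Q_i^{\Pi_2}(t)\leq_{st}\sum_{i=1}^k Q_i^{\Pi_1}(t)$, the idea is that JSQ is extremal: among all schemes that assign to one of the $n(N)+1$ lowest ordered pools, JSQ always picks the \emph{minimum}, hence is the most likely to increment the partial sum at each level. First I would set up the T-coupling so arrivals are synchronized and departures coupled as described, then verify inductively that the difference $\sum_{i=1}^k Q_i^{\Pi_1}(t)-\sum_{i=1}^k Q_i^{\Pi_2}(t)$ remains nonnegative. At an arrival, JSQ assigns to position $k_1$ and CJSQ to position $k_2$ with $k_1\leq k_2$ (since JSQ takes the strict minimum while CJSQ only guarantees one of the first $n(N)+1$); the partial-sum increment for $\Pi_1$ is $\ind{\text{pool }k_1\text{ has}<k\text{ tasks}}$ and dominates that for $\Pi_2$ by monotonicity of the ordered occupancies, so the gap cannot decrease. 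Departures are handled by the green/blue/red bookkeeping already developed for Proposition~\ref{prop:stoch-ord2}.

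For the lower bound $\sum_{i=1}^k Q_i^{\Pi_1}(t)-kn(N)\leq_{st}\sum_{i=1}^k Q_i^{\Pi_2}(t)$, the point is that CJSQ can be ``behind'' JSQ by at most $n(N)$ tasks at each of the $k$ levels. The mechanism is that CJSQ always assigns within the first $n(N)+1$ ordered pools, so whenever JSQ increments the partial sum at level~$i$ but CJSQ does not, the discrepancy at that level is bounded because CJSQ is forced to fill among the lowest $n(N)+1$ pools. I would formalize this by showing inductively that the per-level gap $Q_i^{\Pi_1}(t)-Q_i^{\Pi_2}(t)$ stays bounded by $n(N)$ for each $i$, whence the summed gap over $k$ levels is at most $kn(N)$; the coupling guarantees this holds pathwise, which is stronger than the stochastic ordering claimed.

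The main obstacle I anticipate is the careful verification of the induction step at arrival epochs when the two schemes differ in decision, specifically controlling how the increment to the partial sum is distributed across levels, since an arrival to a pool with $j$ tasks raises $Q_{j+1}$ only, and one must confirm that the relevant indicator inequalities on the ordered occupancies are consistent with the induction hypotheses for \emph{every} $k$ simultaneously. The subtlety is that the comparison must hold uniformly in $k$, and the two bounds pull in opposite directions, so I would need to track both the cumulative overshoot (for the upper bound) and the per-level deficit of at most $n(N)$ (for the lower bound) through the same coupled dynamics, taking particular care at departure epochs from the red and blue regions where the partial sums of the two systems can move independently.
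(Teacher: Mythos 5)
Your overall frame -- T-coupling plus forward induction on event times applied to the partial sums $\sum_{i=1}^k Q_i^{\Pi}$ -- is the right one, and your treatment of the upper bound is essentially the paper's, modulo one imprecision: the claim that the increment $\ind{\text{pool }k_1\text{ has }<k\text{ tasks}}$ for $\Pi_1$ \emph{dominates} that for $\Pi_2$ ``by monotonicity of the ordered occupancies'' is false pathwise (the two indicators live in different systems, so off the boundary $\Pi_2$ can increment while $\Pi_1$ does not, and on the boundary the domination is exactly what you are trying to prove). What actually closes the induction is the boundary argument: when $\sum_{i=1}^k Q_i^{\Pi_2}(t_0)=\sum_{i=1}^k Q_i^{\Pi_1}(t_0)$ and the $\Pi_2$-sum increments, the common value is $<kN$, hence some pool in $\Pi_1$ has fewer than $k$ tasks, and JSQ, assigning to the global minimum, must then increment its sum as well. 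Away from the boundary nothing needs to be shown, since each arrival changes either sum by at most one.

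The genuine gap is in your lower bound. You propose to establish the \emph{per-level} estimate $Q_i^{\Pi_1}(t)-Q_i^{\Pi_2}(t)\leq n(N)$ for every $i$ and sum over $i\leq k$. That intermediate claim is strictly stronger than the lemma (whose consequence, via \eqref{eq:upperocc}--\eqref{eq:lowerocc}, is only $|Q_k^{\Pi_1}-Q_k^{\Pi_2}|\leq kn(N)$, with the factor $k$), and your induction cannot close: at the boundary $Q_i^{\Pi_1}-Q_i^{\Pi_2}=n(N)$, an arrival for which the minimum occupancy in $\Pi_1$ equals $i-1$ increments $Q_i^{\Pi_1}$, while the CJSQ$(n(N))$ scheme is free to assign to a pool with \emph{fewer than} $i-1$ tasks (such a pool can exist among its $n(N)+1$ lowest ordered ones), which increments some $Q_j^{\Pi_2}$ with $j<i$ and leaves $Q_i^{\Pi_2}$ unchanged, pushing the level-$i$ gap to $n(N)+1$. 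One can exhibit coupled states saturating the aggregate bound $\sum_{i=1}^k(Q_i^{\Pi_1}-Q_i^{\Pi_2})=kn(N)$ with an individual level gap exceeding $n(N)$, so the per-level route is not merely unproved but unavailable. The paper instead inducts directly on the aggregate: at the lower boundary $\sum_{i=1}^k Q_i^{\Pi_2}(t_0)=\sum_{i=1}^k Q_i^{\Pi_1}(t_0)-kn(N)\leq k(N-n(N))$, whence $Q_k^{\Pi_2}(t_0)\leq N-n(N)$, so enough pools in $\Pi_2$ have fewer than $k$ tasks that the CJSQ assignment is forced to increment $\sum_{i=1}^k Q_i^{\Pi_2}$; this is the idea your proposal is missing. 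Finally, the departure step is not just ``the bookkeeping from Proposition~\ref{prop:stoch-ord2}'': what is needed at the boundary is the implication that a decrease of one partial sum forces a decrease of the other, which rests on the T-coupling's matching of the $m^{\mathrm{th}}$ smallest red and blue indices together with $\sum_{i=1}^{k}(Q_i^{\Pi_1}-Q_i^{\Pi_2})^+\geq\sum_{i=1}^{k}(Q_i^{\Pi_2}-Q_i^{\Pi_1})^+$, and should be spelled out.
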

In the next two remarks we comment on the contrast of Lemma~\ref{lem:majorization} and the underlying T-coupling with stochastic dominance properties for the ordinary JSQ policy in the existing literature and the S-coupling technique in reference~\cite{MBLW16-3}, respectively
\begin{remark}\label{rem:novelty}
{\normalfont
The stochastic ordering in Lemma~\ref{lem:majorization} is to be contrasted with the weak majorization results in~\cite{Winston77, towsley, Towsley95, Towsley1992, W78} in the context of the ordinary JSQ policy in the single-server queueing scenario, and in~\cite{STC93,J89, M87, MS91} in the scenario of state-dependent service rates, non-decreasing with the number of active tasks.
In the current infinite-server scenario, the results in~\cite{STC93,J89, M87, MS91} imply that for any non-anticipating scheme~$\Pi$ taking assignment decision based on the number of active tasks only, for all $t\geq 0$,
\begin{align}\label{eq: towsley}
\sum_{m=1}^\ell X_{(m)}^{JSQ}(t)&\leq_{st}\sum_{m=1}^\ell X_{(m)}^{\Pi}(t),\mbox{ for } \ell=1,2,\ldots, N,\\
\left\{L^{JSQ}(t)\right\}_{t\geq 0}&\leq_{st}\left\{L^{\Pi}(t)\right\}_{t\geq 0},
\end{align}
where $X^{\Pi}_{(m)}(t)$ is the number of tasks in the $m^{\mathrm{th}}$ ordered server pool at time $t$ in the system following scheme $\Pi$ and $L^{\Pi}(t)$ is the total number of overflow events under policy $\Pi$ up to time $t$. 
Observe that $X_{(m)}^{\Pi}$ can be visualized as the $m^{\mathrm{th}}$ largest (rightmost) vertical bar (or stack) in Figure~\ref{fig:1}.
 Thus~\eqref{eq: towsley} says that the sum of the lengths of the $\ell$ largest \emph{vertical} stacks in a system following any scheme $\Pi$ is stochastically larger than or equal to that following the ordinary JSQ policy for any $\ell=1,2,\ldots,N$. Mathematically, this ordering can be equivalently written as
 \begin{equation}\label{eq:equiv-ord}
 \sum_{i = 1}^{B} \min\big\{\ell, Q_i^{JSQ}(t)\big\}  \leq_{st}
\sum_{i = 1}^{B} \min\big\{\ell, Q_i^{\Pi}(t)\big\},
 \end{equation}
for all $\ell = 1, \dots, N$.
In contrast, in order to show asymptotic equivalence on various scales, we need to both upper and lower bound the occupancy states of the CJSQ$(n(N))$ schemes in terms of the JSQ policy, and therefore need a much stronger hold on the departure process.
The T-coupling provides us just that, and has several useful properties that are crucial for our proof technique.
For example, Proposition~\ref{prop:stoch-ord2} uses the fact that if two systems are T-coupled, then departures cannot increase the sum of the absolute differences of the $Q_i$-values, which is not true for the coupling considered in the above-mentioned  literature.
The left stochastic ordering in~\eqref{eq:majorization} also does not remain valid in those cases.
Furthermore, observe that the right inequality in~\eqref{eq:majorization} (i.e., $Q_i$'s) implies the stochastic inequality is \emph{reversed} in~\eqref{eq:equiv-ord}, which is counter-intuitive in view of the optimality properties of the ordinary JSQ policy studied in the literature, as mentioned above.
The fundamental distinction between the two coupling techniques is also reflected by the fact that the T-coupling does not allow for arbitrary nondecreasing state-dependent departure rate functions, unlike the couplings in~\cite{STC93,J89, M87, MS91}.}
\end{remark}

\begin{remark}\label{rem:contrast}\normalfont
As briefly mentioned in the introduction, in the current infinite-server scenario, the departures of the ordered server pools cannot be coupled, mainly since the departure rate at the $m^{\rm th}$ ordered server pool, for some $m = 1,2,\ldots, N$, depends on its number of active tasks.
It is worthwhile to mention that the coupling in this paper is stronger than that used in~\cite{MBLW16-3}.
Observe that due to Lemma~\ref{lem:majorization}, the absolute difference of the occupancy states of the JSQ policy and any scheme from the CJSQ class at any time point can be bounded deterministically (without any terms involving the cumulative number of lost tasks).
It is worth emphasizing that the universality result on some specific scale, stated in Theorem~\ref{th:pwr of d} does not depend on the behavior of the JSQ policy on that scale, whereas in~\cite{MBLW16-3} it does, mainly because the upper and lower bounds in \cite[Corollary 3.3]{MBLW16-3} involve tail sums of two different policies.
Also, the bound in the current paper does not depend upon $t$, and hence, applies in the steady state as well.
Moreover, the coupling in~\cite{MBLW16-3} compares the $k$ \emph{highest} horizontal bars, whereas the present paper compares the $k$ \emph{lowest} horizontal bars.
As a result, the bounds on the occupancy states established in~\cite[Corollary 3.3]{MBLW16-3} involves tail sums of the occupancy states of the ordinary JSQ policy,
which necessitates proving the $\ell_1$ convergence of the occupancy states of the ordinary JSQ policy. 
In contrast, the bound we establish in the present paper, involves only a single component (see equations \eqref{eq:upperocc} and \eqref{eq:lowerocc}), and thus, the convergence with respect to product topology suffices.
\end{remark}

\begin{proof}[Proof of Lemma~\ref{lem:majorization}]
Fix any $k\geq 1$. We will use forward induction on the event times, i.e., time epochs when either an arrival or a departure occurs,
and assume the two systems to be T-coupled as described in Section~\ref{sec:stoch-coupling}. 
We suppose that the two inequalities hold at time epoch $t_0$, and will prove that they continue to hold at time epoch $t_1$.

(a) We first prove the left inequality in~\eqref{eq:majorization}. 
We distinguish between two cases depending on whether the next event time $t_1$ is an arrival epoch or a departure epoch.
We first consider the case of an arrival.
Since at each arrival, there can be an increment of size at most one, if $\sum_{i=1}^k Q_i^{\Pi_1}(t_0)-kn(N)< \sum_{i=1}^k Q_i^{\Pi_2}(t_0)$, the inequality holds trivially at time $t_1$. Therefore, consider the case when $\sum_{i=1}^k Q_i^{\Pi_1}(t_0)-kn(N)= \sum_{i=1}^k Q_i^{\Pi_2}(t_0)$. Now observe that,
$$\sum_{i=1}^k Q_i^{\Pi_2}(t_0)=\sum_{i=1}^k Q_i^{\Pi_1}(t_0)-kn(N)\leq kN-kn(N).$$
Hence, $Q_k^{\Pi_2}(t_0)\leq N-n(N)$, which in turn implies that at time $t_1$, $\sum_{i=1}^k Q_i^{\Pi_2}$ increases by 1, and the inequality is preserved. 
We now assume the case of a departure.
Then also if $\sum_{i=1}^k Q_i^{\Pi_1}(t_0)-kn(N)< \sum_{i=1}^k Q_i^{\Pi_2}(t_0)$, the inequality holds trivially at time $t_1$. Otherwise assume $\sum_{i=1}^k Q_i^{\Pi_1}(t_0)-kn(N)= \sum_{i=1}^k Q_i^{\Pi_2}(t_0)$. 
In this case if the departure occurs from the green region in Figure~\ref{fig:modified jsl}, then both $\sum_{i=1}^k Q_i^{\Pi_1}$ and $\sum_{i=1}^k Q_i^{\Pi_2}$ change in a similar fashion (i.e., either decrease by one or remain unchanged). 
Else, if the departure occurs from the red and blue regions, since $\sum_{i=1}^k Q_i^{\Pi_1}\geq \sum_{i=1}^k Q_i^{\Pi_2}$, by virtue of the T-coupling, if $\sum_{i=1}^k Q_i^{\Pi_2}$ decreases by one, then so does $\sum_{i=1}^k Q_i^{\Pi_1}$. 
To see this observe the following:
\begin{equation}
\sum_{i=1}^k Q_i^{\Pi_1}\geq \sum_{i=1}^k Q_i^{\Pi_2}\implies\sum_{i=1}^{k}(Q_i^{\Pi_1}-Q_i^{\Pi_2})^+\geq\sum_{i=1}^{k}(Q_i^{\Pi_2}-Q_i^{\Pi_1})^+.
\end{equation}
Therefore, if $m\leq \sum_{i=1}^{k}(Q_i^{\Pi_2}-Q_i^{\Pi_1})^+$, then $m\leq \sum_{i=1}^{k}(Q_i^{\Pi_1}-Q_i^{\Pi_2})^+$.
 Hence the inequality will be preserved.

(b) We now prove the right inequality in~\eqref{eq:majorization}
and again distinguish between two cases. 
If $t_1$ is an arrival epoch, 
 we assume for a similar reason as above, $\sum_{i=1}^k Q_i^{\Pi_2}(t_0)=\sum_{i=1}^k Q_i^{\Pi_1}(t_0)$. 
In this case when a task arrives, if it gets admitted under the CJSQ($n(N)$) scheme and increases $\sum_{i=1}^k Q_i^{\Pi_2}$, then clearly $\sum_{i=1}^k (N-Q_i^{\Pi_1}(t))>0$, and hence the incoming task will increase $\sum_{i=1}^k Q_i^{\Pi_1}$, as well, and the inequality will be preserved. 
If $t_1$ is a departure epoch with $\sum_{i=1}^k Q_i^{\Pi_2}(t_0)=\sum_{i=1}^k Q_i^{\Pi_1}(t_0)$, then by virtue of the T-coupling again, if  $\sum_{i=1}^k Q_i^{\Pi_1}$ decreases by one, then by the argument in (a) above, so does $\sum_{i=1}^k Q_i^{\Pi_2}$, thus preserving the inequality.
\end{proof}

\subsection{Asymptotic equivalence}\label{ssec:asymp-eq}

\begin{proof}[Proof of Proposition~\ref{prop: modified JSL}]
Using Lemma~\ref{lem:majorization},
there exists a common probability space such that for any $k\geq 1$ we can write
\begin{equation}\label{eq:upperocc}
\begin{split}
Q_k^{\Pi_2}(t)&=\sum_{i=1}^k Q_i^{\Pi_2}(t)-\sum_{i=1}^{k-1} Q_i^{\Pi_2}(t)\\
&\leq \sum_{i=1}^{k} Q_i^{\Pi_1}(t)-\sum_{i=1}^{k-1} Q_i^{\Pi_1}(t)+kn(N)\\
&=Q_k^{\Pi_1}(t)+kn(N).
\end{split}
\end{equation}
Similarly, we can write
\begin{equation}\label{eq:lowerocc}
\begin{split}
Q_k^{\Pi_2}(t)&=\sum_{i=1}^k Q_i^{\Pi_2}(t)-\sum_{i=1}^{k-1} Q_i^{\Pi_2}(t)\\
&\geq \sum_{i=1}^{k} Q_i^{\Pi_1}(t)-kn(N)-\sum_{i=1}^{k-1} Q_i^{\Pi_1}(t)\\
&=Q_k^{\Pi_1}(t)-kn(N).
\end{split}
\end{equation}
Therefore, for all $k\geq 1$, we have, $\sup_t|Q_k^{\Pi_2}(t)-Q_k^{\Pi_1}(t)|\leq kn(N)$. Since $n(N)/ g(N)\to\infty$ as $N\to\infty$, the proof is complete.
\end{proof}


\begin{proof}[Proof of Proposition~\ref{prop: power of d}]
For any $T\geq 0$, let $A^N(T)$ and $\Delta^N(T)$ be the total number
of arrivals to the system and the cumulative number of times that the
JSQ$(d(N))$ scheme and the JSQ$(n(N),d(N))$ scheme differ in decision up to time $T$.
Using Proposition~\ref{prop:stoch-ord2} it suffices to show that
for any $T\geq 0$, $\Delta^N(T)/g(N)\pto 0$ as $N\to\infty$.
Observe that at any arrival epoch, the systems under the JSQ$(d(N))$ and
JSQ$(n(N),d(N))$ schemes will differ in decision only if none of the $n(N)+1$ lowest ordered server pools get selected by the JSQ$(d(N))$ scheme.

Now at the time of an arrival, the probability that the JSQ$(d(N))$ scheme does not select one of the $n(N)+1$ lowest ordered server pools, is given by 
$$p(N)=\left(1-\frac{n(N)+1}{N}\right)^{\sss d(N)}.$$
Since at each arrival epoch $d(N)$ server pools are selected independently, given $A^N(T)$, $\Delta^N(T)\sim \mbox{Bin}(A_N(T),p(N))$.

Note that, for $T\geq 0$, Markov's inequality yields
$$\Pro{\Delta^N(T)\geq g(N)\given A_N(T)}\leq \frac{\E{\Delta^N(T)}}{g(N)}=\frac{A_N(T)}{g(N)}\left(1-\frac{n(N)+1}{N}\right)^{\sss d(N)}.$$
Since $\big\{A^N(T)/N\big\}_{N\geq 1}$ is a tight sequence of random variables, in order to ensure that $\Delta^N(T)/g(N)$ converges to zero in probability, it is enough to have 
\begin{equation}\label{eq:nN-order}
\begin{split}
&\frac{N}{g(N)}\left(1-\frac{n(N)+1}{N}\right)^{\sss d(N)}\to 0\\
\Longleftarrow\hspace{.15cm}&\exp\left(\log\left(\frac{N}{g(N)}\right)-d(N)\frac{n(N)}{N}\right)\to 0\\
\iff& d(N)\frac{n(N)}{N}-\log\left(\frac{N}{g(N)}\right)\to\infty.
\end{split}
\end{equation}
\end{proof}

We now use Propositions~\ref{prop: modified JSL} and~\ref{prop: power of d}  to prove Theorem~\ref{th:pwr of d}.
\begin{proof}[Proof of Theorem~\ref{th:pwr of d}]
Fix any $d(N)$ satisfying either~\eqref{eq:fNalike cond1} or~\eqref{eq:fNalike cond2}.
From Propositions~\ref{prop: modified JSL} and~\ref{prop: power of d} observe that it is enough to 
show that there exists an $n(N)$ with $n(N)\to\infty$ and $n(N)/g(N)\to 0$, as $N\to\infty$, such that
$$\frac{n(N)}{N}d(N)-\log\left(\frac{N}{g(N)}\right)\to\infty.$$

(i) If $g(N) = O(N)$, then observe that $\log(N/g(N))$ is $O(1)$. 
Since $d(N)\to\infty$, choosing $n(N) = N/ \log (d(N))$ satisfies the above criteria, and hence part (i) of the theorem is proved.

(ii) Next we obtain a choice of $n(N)$ if $g(N)=o(N)$.
Note that, if
$$h(N):=\frac{d(N)\frac{g(N)}{N}}{\log\left(\frac{N}{g(N)}\right)}\to\infty,\quad \text{as}\quad N\to\infty,$$
then choosing $n(N)= g(N)/\log(h(N))$, it can be seen that as $N\to\infty$, $n(N)/g(N)\to 0$, and
\begin{equation}\label{eq:equiv-nN-dN}
\begin{split}
&\frac{d(N)\frac{n(N)}{N}}{\log\left(\frac{N}{g(N)}\right)}=\frac{h(N)}{\log(h(N))}\to\infty\\
\implies& \frac{n(N)}{N}d(N)-\log\left(\frac{N}{g(N)}\right)\to\infty.
\end{split}
\end{equation}
\end{proof}

\section{Fluid Limit of JSQ}\label{sec:fluid}
In this section we establish the fluid limit for the ordinary JSQ policy.
In the proof we will leverage the time scale separation technique developed in~\cite{HK94}, suitably extended to an infinite-dimensional space. 
Specifically, note that the rate at which incoming tasks join a server pool
with $i$ active tasks is determined only by the process $\ZZ^N(\cdot)=(Z_1^N(\cdot),\ldots,Z_B^N(\cdot))$, where $Z_i^N(t)=N-Q_i^N(t)$, $i=1,\ldots,B$, represents the number of server pools with fewer than $i$ tasks at time $t$.
Furthermore, in any time interval $[t,t+\varepsilon]$ of length $\varepsilon>0$, the $\ZZ^N(\cdot)$ process experiences $O(\varepsilon N)$ events (arrivals and departures), 
while the $\qq^N(\cdot)$ process can change by only $O(\varepsilon)$ amount.
Therefore, the $\ZZ^N(\cdot)$ process evolves on a much faster time scale than
the $\qq^N(\cdot)$ process.
As a result, in the limit as $N\to\infty$, at each time point $t$, the $\ZZ^N(\cdot)$ process
achieves stationarity depending on the instantaneous value of the $\qq^N(\cdot)$ process, i.e., a separation of time scales takes place. 

In order to illuminate the generic nature of the proof construct,
we will allow for a more general task assignment probability and departure dynamics than described in Section~\ref{sec: model descr}.
Denote by $\bZ_+$ the one-point compactification of the set of nonnegative integers~$\Z_+$, i.e., $\bZ_+=\bZ_+\cup\{\infty\}$.
Equip $\bZ_+$ with the order topology. Denote $G=\bZ_+^B$ equipped with product-topology, and with the Borel $\sigma$-algebra, $\mathcal{G}$.
Let us consider the $G$-valued process $\ZZ^N(s):=(Z_i^N(s))_{i\geq 1}$ as introduced above.
Let $\big\{\mathcal{R}_i\big\}_{1\leq i\leq B}$ be a partition of $G$ such that $\mathcal{R}_i\in\mathcal{G}$.
We assume that a task arriving at (say) $t_k$ is assigned to 
some server pool with $i$ active tasks is given by $p_{i-1}^N(\QQ^N(t_k-))=\ind{\ZZ^N(t_k-)\in\mathcal{R}_{i}}f_i(\qq^N(t_k-))$, where $\ff=(f_1,\ldots,f_B):[0,1]^B\to [0,1]^B$ is Lipschitz continuous, i.e., there exists $C_\ff$, such that for any $\qq_1,\qq_2\in S,$
$$\norm{\ff(\qq_1)-\ff(\qq_2)}\leq C_\ff \norm{\qq_1-\qq_2}.$$
The partition corresponding to the ordinary JSQ policy can be written as
\begin{equation}\label{eq:partition}
\mathcal{R}_i := \big\{(z_1,z_2,\ldots, z_B): z_1=\ldots=z_{i-1}=0<z_i\leq z_{i+1}\leq\ldots\leq z_B\big\},
\end{equation}
with the convention that $Q^N_B$ is always taken to be zero, if $B<\infty$, and $f_i\equiv 1$ for all $i=1,2,\ldots,B$.
The fluid-limit results up to Proposition~\ref{prop:rel compactness} (the relative compactness of the fluid-scaled process) hold true for these general assignment probabilities. 
It is only when proving Theorem~\ref{th:genfluid}, that we need to assume the specific $\big\{\mathcal{R}_i\big\}_{1\leq i\leq B}$ in~\eqref{eq:partition}.
For the departure dynamics, when the system occupancy state is $\QQ^N=(Q_1^N,Q_2^N,\ldots,Q_B^N)$, 
define
the total rate at which departures occur from a server pool with $i$ active tasks by $\mu_i^N(\QQ)$, where $\mmu^N(\QQ)=(\mu_1^N(\QQ),\ldots,\mu_B^N(\QQ))$ will be referred to as the departure rate function.
The departure dynamics described in Section~\ref{sec: model descr} correspond to $\mu_i^N(\QQ)=i(Q_i-Q_{i+1})$ and will be referred to as the infinite-server scenario,
since all active tasks are executed concurrently.
The single-server scenario, where tasks are executed sequentially, corresponds to  the case $\mu_i^N(\QQ)=Q_i-Q_{i+1}$.
\begin{assumption}[{Condition on departure rate function}] \label{assump:mu}
The departure rate function $\mmu^N:\tilde{S}\to[0,\infty)^B$ satisfies the following conditions
\begin{enumerate}[{\normalfont (a)}]
\item There exists a function $\mmu:S\to[0,\infty)^B$, such that 
$$\lim_{N\to\infty}\sup_{\qq\in S^N}\norm{\frac{1}{N}\mmu^N(\lfloor N\qq\rfloor)-\mmu(\qq)}=0.$$
\item The function $\mmu$ is Lipschitz continuous in $S$, i.e., there exists a constant $C_{\mmu}$, such that for any $\qq_1,\qq_2\in S$, 
$$\norm{\mmu(\qq_1)-\mmu(\qq_2)}\leq C_{\mmu} \norm{\qq_1-\qq_2}.$$
\item Also, $\mmu^N$ satisfies linear growth constraints in each coordinate, i.e., for all $i\geq 1$, there exists $C_i>0$, such that for all $\qq\in S$,
 $$\mu^N_i(\lfloor N\qq\rfloor)\leq NC_i(1+\norm{\qq}).$$
We will often omit $\lfloor \cdot\rfloor$ in the argument of $\mmu^N$ for notational convenience.
\end{enumerate}
\end{assumption}
Under these assumptions on the departure rate function, we prove the following fluid-limit result for the ordinary JSQ policy.
Recall the definition of $m(\qq)$ in Subsection~\ref{ssec:fluid}, and define
\begin{equation}\label{eq:fluid-gen}
p_{i}(\qq)=
\begin{cases}
\min\big\{\mu_{m(\qq)}(\qq)/\lambda,1\big\} & \quad\mbox{ for }\quad i=m(\qq)-1,\\
1 - p_{\sss m(\qq) - 1}(\qq) & \quad\mbox{ for }\quad i=m(\qq),\\
0&\quad \mbox{ otherwise.}
\end{cases}
\end{equation}
Note that $p_i(\cdot)$ in~\eqref{eq:fluid-gen} is consistent with the one defined in Subsection~\ref{ssec:fluid} for the proper choice of the departure rate function $\mu_i(\qq)=i(q_i-q_{i+1})$.
\begin{theorem}[{Fluid limit of JSQ}]
\label{th:genfluid}
Assume $\qq^N(0)\pto \qq^\infty\in S$ and $\lambda(N)/N\to\lambda>0$ as $N\to\infty$. Further assume that the departure rate function $\mmu^N$ satisfies Assumption~\ref{assump:mu}. Then the sequence of processes $\big\{\qq^N(t)\big\}_{t\geq 0}$ for the ordinary JSQ policy has a continuous weak limit that satisfies the system of integral equations
\begin{equation}\label{eq:fluidfinal}
q_i(t) = q_i(0)+\lambda \int_0^t p_{i-1}(\qq(s))\dif s - \int_0^t \mu_i(\qq(s))\dif s, \quad i=1,2,\ldots,B,
\end{equation}
where $\qq(0) = \qq^\infty$ and the coefficients $p_i(\cdot)$ are
defined in~\eqref{eq:fluid-gen}, and may be interpreted as the fractions of incoming tasks
assigned to server pools with exactly $i$ active tasks.
\end{theorem}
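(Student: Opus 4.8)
The plan is to carry out the time-scale separation programme of \cite{HK94} on the compactified state space $G=\bZ_+^B$, treating $\qq^N(\cdot)$ as the slow variable and the unscaled vector $\ZZ^N(\cdot)$ as the fast variable. I would start from the semimartingale decomposition of the fluid-scaled occupancy process. Since $Q_i^N$ jumps up by one exactly when an arrival is routed to a pool with $i-1$ tasks and down by one exactly at a departure from a pool with $i$ tasks,
\begin{equation*}
q_i^N(t)=q_i^N(0)+\frac{\lambda(N)}{N}\int_0^t\ind{\ZZ^N(s-)\in\mathcal{R}_i}f_i(\qq^N(s-))\dif s-\frac{1}{N}\int_0^t\mu_i^N(\QQ^N(s-))\dif s+\frac{M_i^N(t)}{N},
\end{equation*}
where $M_i^N$ is a square-integrable martingale. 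Because all arrival and departure intensities are $O(N)$ over a finite horizon, $\langle M_i^N\rangle_t=O(N)$, so Doob's inequality gives $\sup_{t\le T}|M_i^N(t)|/N=\Op(N^{-1/2})$ and the martingale term vanishes in the limit. Using $\lambda(N)/N\to\lambda$, Assumption~\ref{assump:mu}(a)--(b), the relative compactness furnished by Proposition~\ref{prop:rel compactness}, and the Skorohod representation, the departure term converges along any convergent subsequence to $\int_0^t\mu_i(\qq(s))\dif s$ (the jumps of $\qq^N$ are $O(1/N)$, so every limit point is continuous). The entire difficulty is thus concentrated in the arrival integral, whose integrand contains the rapidly oscillating indicator $\ind{\ZZ^N(s-)\in\mathcal{R}_i}$ that cannot be passed to the limit pointwise.

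To average this indicator I would introduce the random occupation measures $\Gamma^N$ on $[0,T]\times G$ defined by $\Gamma^N([0,t]\times A)=\int_0^t\ind{\ZZ^N(s)\in A}\dif s$. Compactness of $G$ (precisely the reason $\Z_+$ is replaced by its one-point compactification $\bZ_+$) makes $\{\Gamma^N\}$ tight, and any weak limit disintegrates as $\Gamma(\dif s\,\dif z)=\dif s\,\pi_s(\dif z)$ for a measurable family of probability measures $\pi_s$ on $G$. Passing to the limit along a subsequence on which $(\qq^N,\Gamma^N)$ converges jointly (and recalling $f_i\equiv1$ for the JSQ partition), I obtain
\begin{equation*}
q_i(t)=q_i(0)+\lambda\int_0^t\pi_s(\mathcal{R}_i)\dif s-\int_0^t\mu_i(\qq(s))\dif s.
\end{equation*}
It then remains to identify $\pi_s(\mathcal{R}_i)$ with $p_{i-1}(\qq(s))$, i.e. to show that the quasi-stationary law of the fast coordinates, frozen at the slow state $\qq(s)$, places mass $p_{i-1}(\qq(s))$ on each region $\mathcal{R}_i$.

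Writing $m=m(\qq(s))$, I would identify $\pi_s$ in two steps. First, its support: for $i\ge m+2$ the event $\{\ZZ^N\in\mathcal{R}_i\}$ forces $Z_{m+1}^N=0$, whereas $q_{m+1}(s)<1$ makes $Z_{m+1}^N$ of order $N$, so $\pi_s(\mathcal{R}_i)=0$; for $i\le m-1$ the event requires a pool strictly below level $m-1$, and whenever such a pool exists JSQ routes arrivals to it at the full rate $\lambda(N)=\Theta(N)$, so downward excursions of the minimum are exited on the fast time scale and contribute vanishing occupation, whence $\pi_s(\mathcal{R}_i)=0$. This leaves only $\mathcal{R}_m$ (minimum equal to $m-1$) and $\mathcal{R}_{m+1}$ (minimum equal to $m$). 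Second, the split: on the fast time scale the lowest-level occupancy $Z_m^N$ behaves as a birth-death process with birth rate $\approx\mu_m(\qq(s))N$ (each departure from the bulk of level-$m$ pools creates a level-$(m-1)$ pool) and death rate $\approx\lambda N$ while $Z_m^N\ge1$ (each arrival lifts the lowest pool from $m-1$ to $m$); its stationary distribution has ``traffic intensity'' $\mu_m(\qq(s))/\lambda$, so that $\pi_s(\mathcal{R}_{m+1})=\Pro{Z_m^N=0}\to(1-\mu_m(\qq(s))/\lambda)^+=p_m(\qq(s))$ and $\pi_s(\mathcal{R}_m)=\min\{\mu_m(\qq(s))/\lambda,1\}=p_{m-1}(\qq(s))$, exactly the coefficients \eqref{eq:fluid-gen}. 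Substituting yields \eqref{eq:fluidfinal}.

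The genuinely hard part, and the reason the infinite-dimensional extension of \cite{HK94} is needed, is the support step: one must prove that, uniformly in the slow state, the fast Markov process on $G$ concentrates its occupation on the two regions adjacent to the current minimum, so that deep excursions are negligible on the $\dif s$-scale and the limiting kernel $\pi_s$ coincides with the unique invariant measure $\pi_{\qq(s)}$ of the frozen fast process, together with verifying that $\qq\mapsto\pi_\qq(\mathcal{R}_i)$ is sufficiently regular to be integrated against. Bounding the expected time the minimum spends at levels $\le m-2$ by a quantity of order $1/N$ per excursion is the core estimate here. Finally, the above shows that every subsequential limit satisfies \eqref{eq:fluidfinal}; to upgrade this to convergence of the full sequence I would establish uniqueness of the solution. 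This needs care because $m(\qq)$, and hence $p_i(\qq)$, jumps across the faces $\{q_{m+1}=1\}$; I would argue uniqueness by a Gr\"onwall estimate on each maximal interval on which $m(\qq(\cdot))$ is constant and paste across the ordered times at which the minimum level changes.
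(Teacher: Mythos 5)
Your overall architecture coincides with the paper's: the same martingale decomposition controlled by Doob's inequality (Propositions~\ref{prop:mart-rep} and~\ref{prop:mart zero1}), the same occupation measure on $[0,\infty)\times G$ with $G$ compactified precisely to secure tightness (the paper's $\alpha^N$ in Proposition~\ref{prop:rel compactness}), and the same invocation of the time-scale separation result of~\cite{HK94} to reduce the problem to identifying the averaged kernel $\pi_s$ with an invariant law of the frozen fast chain, which collapses to a one-dimensional birth-death process with birth rate $\mu_{m}(\qq)$ and death rate $\lambda\ind{Z>0}$.

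The gap is in the identification step. You appeal to ``the unique invariant measure of the frozen fast process,'' but on the compactified space $\bZ_+$ the frozen chain is \emph{reducible}: $\Z_+$ and $\{\infty\}$ are separate classes, so when $\mu_m(\qq)<\lambda$ every convex combination of the geometric law on $\Z_+$ and $\delta_\infty$ is invariant, and the averaging theorem only guarantees that $\pi_s$ is \emph{some} invariant measure. Your computation $\Pro{Z_m=0}\to(1-\mu_m(\qq)/\lambda)^+$ therefore does not follow as stated: mass could in principle sit at $\{Z_m=\infty\}$, which would inflate $\pi_s(\mathcal{R}_m)$ and deflate $\pi_s(\mathcal{R}_{m+1})$. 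The paper closes this (and simultaneously the support step for $i\le m-1$ that you single out as the core difficulty) by a self-consistency argument with the limiting equation itself: since $q_i(t)\le 1$ always, the constraint $q_i(s)=1$ forces $\dif q_i/\dif s\le 0$, i.e.\ $\lambda\pi_{\qq}(\mathcal{R}_i)\le\mu_i(\qq)$ for all $i\le m$; this yields $\pi_\qq(\mathcal{R}_i)=0$ for $i\le m-1$ outright, and when $\mu_m(\qq)<\lambda$ it shows that any positive mass at $\{Z_m=\infty\}$ would give $\pi_\qq(\mathcal{R}_m)>\mu_m(\qq)/\lambda$ and hence $\dif q_m/\dif s>0$, a contradiction with $q_m=1$. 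This entirely replaces the uniform per-excursion $O(1/N)$ occupation-time estimate you propose, which is delicate (the downward drift of the minimum is driven by the fast variable $Z_m-Z_{m-1}$, not by a quantity you control a priori) and turns out to be unnecessary. If you insist on the direct route you must both prove that estimate uniformly in the slow state and separately rule out the mass at infinity; the self-consistency argument does both at once.
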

We will now verify that the departure rate functions corresponding to the infinite-server and single-server scenarios satisfy the conditions in Assumption~\ref{assump:mu}.
\begin{proposition}
The following departure rate functions denoted by $\mmu = (\mu_1, \mu_2,\ldots,\mu_B)$, satisfy the conditions in Assumption~\ref{assump:mu}.
For $\QQ\in \tilde{S}$, and $\qq\in S$,
\begin{enumerate}[{\normalfont (i)}]
\item $\mu_i^N(\QQ)=Q_i-Q_{i+1}$, and $\mu_i(\qq)=q_i-q_{i+1}$, $i\geq 1$.
\item $\mu_i^{N}(\QQ)=i(Q_i-Q_{i+1})$, and $\mu_i(\qq)=i(q_i-q_{i+1})$, $i\geq 1$.
\end{enumerate}
\end{proposition}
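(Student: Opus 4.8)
The plan is to verify the three conditions (a), (b), (c) of Assumption~\ref{assump:mu} for each of the two candidate functions, exploiting the crucial observation that in both cases $\mmu$ is \emph{linear} in $\qq$: coordinatewise, $\mu_i(\qq)=q_i-q_{i+1}$ (resp.\ $i(q_i-q_{i+1})$) is a weighted difference of two coordinates, and $\mmu^N(\QQ)$ is the same linear map applied to the integer state. Consequently (b) reduces to showing that the associated linear operator $T:\qq\mapsto\mmu(\qq)$ is bounded from $(S,\norm{\cdot})$ into $([0,\infty)^B,\norm{\cdot})$, which I would establish by a direct weighted column-sum (Abel-summation) estimate; (a) reduces to a floor-rounding estimate; and (c) is an elementary pointwise bound. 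I would treat the single-server function (i) and the infinite-server function (ii) in parallel, since the only difference is the extra factor $i$.

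For (a), I would fix $\qq\in S^N$, write $Q_i=\lfloor Nq_i\rfloor$, and use $|\lfloor Nx\rfloor-Nx|\le 1$ to get, in case (i),
\begin{equation*}
\Big|\tfrac1N\mu_i^N(\lfloor N\qq\rfloor)-\mu_i(\qq)\Big|=\Big|\tfrac1N(\lfloor Nq_i\rfloor-\lfloor Nq_{i+1}\rfloor)-(q_i-q_{i+1})\Big|\le\tfrac{2}{N},
\end{equation*}
and in case (ii) the same bound multiplied by $i$. Summing against the weights $2^{-i}$ yields $\norm{\tfrac1N\mmu^N(\lfloor N\qq\rfloor)-\mmu(\qq)}\le\tfrac2N\sum_{i\ge1}2^{-i}=\tfrac2N$ in case (i) and $\le\tfrac2N\sum_{i\ge1}i2^{-i}=\tfrac4N$ in case (ii); both bounds are uniform in $\qq$ and vanish as $N\to\infty$, so (a) holds. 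Here the convergence of $\sum_i i2^{-i}$ is exactly what lets the factor $i$ be accommodated even when $B=\infty$. For (c) I would note $\mu_i^N(\lfloor N\qq\rfloor)\le\lfloor Nq_i\rfloor\le Nq_i\le N(1+\norm{\qq})$ in case (i), so $C_i=1$, and the same bound multiplied by $i$ in case (ii), so $C_i=i$; this is why the assumption usefully allows $C_i$ to depend on $i$.

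The heart of the matter is (b). Writing $\delta_i:=q_{1,i}-q_{2,i}$, linearity gives $\mu_i(\qq_1)-\mu_i(\qq_2)=\delta_i-\delta_{i+1}$ in case (i) and $i(\delta_i-\delta_{i+1})$ in case (ii). Since a perturbation of $\qq$ in coordinate $j$ affects only rows $j-1$ and $j$ of $\mmu$, the cleanest way to read off the Lipschitz constant is the weighted column sum $2^{j}\sum_i 2^{-i}|T_{ij}|$. In case (i) this equals $2^{j}(2^{-j}+2^{-(j-1)})=1+2=3$, uniformly in $j$, giving $C_\mmu=3$ \emph{for every} $B$ (including $B=\infty$). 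In case (ii) the same computation gives $j+2(j-1)=3j-2$ for the $j$-th column, so $C_\mmu=3B-2$ works whenever $B<\infty$, and the argument is routine because a linear map on a finite-dimensional space is automatically bounded. The main obstacle is precisely this last point: when $B=\infty$ the per-column factor $3j-2$ is unbounded, so a single Lipschitz constant for the infinite-server rate function cannot be extracted from the $2^{-i}$-weighted norm alone — the geometric weights suppress the factor $i$ pointwise (as exploited in (a)), but not the telescoping interaction between adjacent coordinates. I would handle this either by carrying out the verification of (b) for $B<\infty$, where everything is immediate, or, if $B=\infty$ is genuinely required, by equipping the codomain with the reweighted metric $\hat\rho(\xx,\yy)=\sum_i|x_i-y_i|/(i2^i)$, for which the column sums collapse back to $1+2=3$ and (b) is restored; this reweighting is harmless since on the relevant bounded sets it induces the same product topology used throughout.
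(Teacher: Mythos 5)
Your verification of (a) and (c) is correct; for (a) the paper simply notes that for $\qq\in S^N$ every $Nq_i$ is already an integer, so $\mmu^N(\lfloor N\qq\rfloor)/N=\mmu(\qq)$ exactly and no floor-rounding estimate is needed (your $O(1/N)$ bound is of course also valid). Your column-sum treatment of (b) in case (i) likewise matches the paper's direct estimate, up to the value of the constant. The substantive point is case (ii) with $B=\infty$, and there you have not merely taken a different route: you have identified a genuine flaw in the paper's own argument. The paper reasons that $\mmu$ is linear on $(\R^B,\norm{\cdot})$, that a linear map continuous at $\mathbf{0}$ is bounded, and verifies continuity at $\mathbf{0}$ by discarding the tail $\sum_{i>M}i|q_i-q_{i+1}|2^{-i}$. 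That tail estimate requires a bound of the form $|q_i-q_{i+1}|\leq\mathrm{const}$, i.e.\ it establishes continuity at $\mathbf{0}$ only along sequences with uniformly bounded coordinates; the subsequent rescaling $\qq\mapsto\varepsilon\qq/\norm{\qq}$ produces points whose coordinates grow like $\varepsilon 2^{j}$, outside the set where the estimate holds, so the passage to a global Lipschitz bound is unjustified. Indeed the claim is false as stated: taking $\qq_1=(1,\dots,1,\tfrac12,0,\dots)$ and $\qq_2=(1,\dots,1,0,0,\dots)$, each with $j-1$ leading ones and both in $S$, gives $\norm{\qq_1-\qq_2}=2^{-(j+1)}$ while $\norm{\mmu(\qq_1)-\mmu(\qq_2)}=(j-1)2^{-j}+j\,2^{-(j+1)}=(3j-2)\,2^{-(j+1)}$, so no constant $C_{\mmu}$ can work --- exactly your column sum $3j-2$.

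Your proposed repairs are the right ones, and the reweighting is even consistent with the paper's own framework, since Lipschitz continuity is defined there for maps into an arbitrary metric space $(E,\hat\rho)$; taking $\hat\rho(\xx,\yy)=\sum_i|x_i-y_i|/(i2^i)$ on the codomain restores a uniform column bound. One should then check that this weaker form of Assumption~\ref{assump:mu}(b) still suffices where it is invoked, namely in the continuity-of-$\FF$ step in the proof of Proposition~\ref{prop:rel compactness}. It does: on $S$ one has $|\mu_i(\qq)|\leq i$, so the tail $\sum_{i>M}2^{-i}\bigl|\int_0^t(\mu_i(\qq^N(s))-\mu_i(\qq(s)))\dif s\bigr|\leq 2T\sum_{i>M}i\,2^{-i}$ is uniformly small, and the finitely many remaining coordinates converge because each $\mu_i$ depends continuously on only two coordinates of $\qq$. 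Equivalently, all that is actually used downstream is uniform continuity of $\mmu$ on the compact set $S$ with respect to the product topology, which the infinite-server rate function does satisfy. The only adjustment I would make to your write-up is to present the reweighting (or the restriction to $B<\infty$) as a necessary amendment to the proposition rather than an optional alternative, since for $B=\infty$ the statement as literally written is not true.
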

\begin{proof}
Observe that if $B<\infty$, then since componentwise $\mu_i$ satisfies all the conditions for all $i\geq 1$, $\mmu$ satisfies the conditions in the product space as well.
Therefore, let us consider the case when $B=\infty$.
In this case observe that,
for both (i) and (ii) condition (a) is immediate, since $\mmu^N(\lfloor N\qq\rfloor)/N=\mmu(\qq)$ for all $\qq\in S^N.$
Also, the linear growth rate constraint in condition~(c) is satisfied in both cases by taking $C_i=1$ in~(i) and $C_i=i$ in~(ii).

Now we will show that in both cases $\mmu$ is Lipschitz continuous in $S$.

(i) For $\mu_i(\qq)=q_i-q_{i+1}$, $i\geq 1$, and $\qq_1,\qq_2\in S$,
$$
\norm{\mmu(\qq)}=\sum_{i\geq 1}\frac{|q_{i}-q_{i+1}|}{2^i}
\leq \sum_{i\geq 1}\frac{q_{i}}{2^i}+\sum_{i\geq 1}\frac{q_{i+1}}{2^i}
\leq 2\norm{\qq}.
$$

(ii) Now assume $\mu_i(\qq)=i(q_i-q_{i+1})$, $i\geq 1$. 
Since $\mmu$ is a linear operator on the Banach space (complete normed linear space) $\R^B$, to prove Lipschitz continuity of $\mu$, it is enough to show that $\mu$ is continuous at zero. 
Specifically, we will show that for any sequence $\big\{\qq^n\big\}_{n\geq 1}$, in $\R^B$, $\norm{\qq^n}\to 0$ implies $\norm{\mmu(\qq^n)}\to 0$. 
This would imply that there exists $\varepsilon>0$, such that whenever $\norm{\qq^n}\leq\varepsilon$ with $\qq^n\in \R^B$, we have $\norm{\mmu(\qq^n)}<1$. Then due to linearity of $\mmu$, for any $\qq\in \R^B$,
\begin{align*}
\norm{\mmu(\qq)}&=\norm{\frac{\norm{\qq}}{\varepsilon}\mmu\left(\varepsilon\frac{\qq}{\norm{\qq}}\right)}\\
&\leq \frac{\norm{\qq}}{\varepsilon}\norm{\mmu\left(\varepsilon\frac{\qq}{\norm{\qq}}\right)}\\
&\leq \frac{1}{\varepsilon}\norm{\qq}.
\end{align*}
To show that $\mmu$ is continuous at $\mathbf{0}\in\R^B$, fix
any $\varepsilon>0$. 
Also, fix an $M>0$, depending upon~$\varepsilon$, such that $\sum_{i> M}1/2^i<\varepsilon/2$. Now, choose $\delta<\varepsilon/(4M)$. Then, for any $\qq$ such that $\norm{\qq}<\delta$, we have
\begin{align*}
\norm{\mmu(\qq)}&=\sum_{i=1}^\infty \frac{i|q_i-q_{i+1}|}{2^i}\\
&=\sum_{i=1}^M \frac{i|q_i-q_{i+1}|}{2^i}+\frac{\varepsilon}{2}\\
&\leq M\sum_{i=1}^M \frac{|q_i-q_{i+1}|}{2^i}+\frac{\varepsilon}{2}\\
&\leq 2M\norm{\qq}+\frac{\varepsilon}{2}\leq \varepsilon.
\end{align*}
Hence, $\mmu$ is Lipschitz continuous on $\R^\infty$.
\end{proof}

\subsection{Martingale representation}
In this subsection we construct the martingale representation of the occupancy state process $\QQ^N(\cdot)$.
The component $Q_i^N(t)$, satisfies the identity relation
\begin{align}\label{eq:recursion}
Q_i^N(t)=Q_i^N(0)+A_i^N(t)-D_i^N(t),&\quad\mbox{ for }\quad i=1,\ldots, B,
\end{align}
where
\begin{align*}
A_i^N(t)&=\mbox{ number of arrivals during $[0,t]$ to some server pool with }i-1\mbox{ active tasks,} \\
D_i^N(t)&=\mbox{ number of departures during $[0,t]$ from some server pool with }i\mbox{ active tasks}.
\end{align*}
We can express $A^N_i(t)$ and $D_i^N(t)$ as
\begin{align*}
A^N_i(t) &=  \mathcal{N}_{A,i}\left(\lambda(N)\int_0^t p_{i-1}^N(\QQ^N(s))\dif s\right),\\
D_i^N(t) &=  \mathcal{N}_{D,i}\left(\int_0^t \mu_i^N(\QQ^N(s))\dif s\right),
\end{align*}
where $\mathcal{N}_{A,i}$ and $\mathcal{N}_{D,i}$ are mutually independent unit-rate Poisson processes, $i=1,2,\ldots,B$.
Define the following sigma fields.
\begin{align*}
\mathcal{A}^N_i(t)&:= \sigma\left(A^N_i(s): 0\leq s\leq t\right),\\
\mathcal{D}_i^N(t)&:= \sigma\left(D_i^N(s): 0\leq s\leq t\right),\mbox{ for }i\geq 1,
\end{align*}
and the filtration $\mathbf{F}^N\equiv\big\{\mathcal{F}^N_t:t\geq 0\big\}$ with
\begin{equation}\label{eq:filtration}
\mathcal{F}^N_t:=\bigvee_{i=1}^{\infty} \big[\mathcal{A}_i^N(t)\vee \mathcal{D}_i^N(t)\big]
\end{equation}
augmented by all the null sets. 
Now we have the following martingale decomposition from the classical result in \cite[Proposition~3]{D76}.

\begin{proposition}\label{prop:mart-rep}
The following are $\mathbf{F}^N$-martingales, for $i\geq 1$:
\begin{equation}\label{eq:martingales}
\begin{split}
M_{A,i}^N(t)&:=  \mathcal{N}_{A,i}\left(\lambda(N)\int_0^t p_{i-1}^N(\QQ^N(s))\dif s\right)-\lambda(N)\int_0^t p_{i-1}^N(\mathbf{Q}^N(s)) \dif s,\\
M_{D,i}^N(t)&:=\mathcal{N}_{D,i}\left(\int_0^t \mu_i^N(\QQ^N(s))\dif s\right)-\int_0^t \mu_i^N(\QQ^N(s))\dif s,
\end{split}
\end{equation}
with respective compensator and predictable quadratic variation processes given by
\begin{align*}
\langle M_{A,i}^N\rangle(t)&:= \lambda(N)\int_0^t p_{i-1}^N(\mathbf{Q}^N(s-))\dif s,\\
\langle M_{D,i}^N\rangle(t)&:=\int_0^t \mu_i^N(\QQ^N(s))\dif s.
\end{align*}
\end{proposition}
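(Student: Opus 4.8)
The plan is to recognize $A_i^N$ and $D_i^N$ as unit-rate Poisson processes evaluated at random, absolutely continuous time changes, and then invoke the classical martingale characterization of such point processes, stated as \cite[Proposition~3]{D76}. Writing $T_{A,i}(t):=\lambda(N)\int_0^t p_{i-1}^N(\QQ^N(s))\dif s$ and $T_{D,i}(t):=\int_0^t\mu_i^N(\QQ^N(s))\dif s$, we have $A_i^N(t)=\mathcal{N}_{A,i}(T_{A,i}(t))$ and $D_i^N(t)=\mathcal{N}_{D,i}(T_{D,i}(t))$, where the $\mathcal{N}$'s are mutually independent unit-rate Poisson processes. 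The cited result yields that a point process of this form admits, with respect to the filtration $\mathbf{F}^N$ in \eqref{eq:filtration}, the stochastic intensity obtained by differentiating the time change, namely $\lambda(N)p_{i-1}^N(\QQ^N(t-))$ and $\mu_i^N(\QQ^N(t-))$ respectively, provided two hypotheses hold: the integrands are $\mathbf{F}^N$-predictable, and the resulting intensities are almost surely locally integrable.

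First I would verify these hypotheses. Since $\QQ^N(\cdot)$ is a c\`adl\`ag pure-jump process adapted to $\mathbf{F}^N$, its left-continuous version $\QQ^N(\cdot-)$ is $\mathbf{F}^N$-predictable, and hence so are the integrands $p_{i-1}^N(\QQ^N(\cdot-))$ and $\mu_i^N(\QQ^N(\cdot-))$, being fixed measurable functions of it. For local integrability, note that $p_{i-1}^N$ takes values in $[0,1]$ and $\lambda(N)<\infty$, so $T_{A,i}$ has bounded derivative; and by the linear-growth bound in Assumption~\ref{assump:mu}(c) together with $\norm{\qq^N(s)}\leq 1$, the rate $\mu_i^N(\QQ^N(s))$ is bounded by a finite constant times $N$ on any finite interval. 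Consequently both time changes are finite and continuous, the hypotheses of \cite[Proposition~3]{D76} are met, and the compensated processes $M_{A,i}^N$ and $M_{D,i}^N$ in \eqref{eq:martingales} are $\mathbf{F}^N$-martingales. I would also remark that the integrals defining the compensators are unaffected by replacing $\QQ^N(s)$ with $\QQ^N(s-)$, since $\QQ^N$ jumps at only countably many (Lebesgue-null) epochs; this reconciles the use of $s$ in \eqref{eq:martingales} with the $s-$ appearing in the stated predictable quadratic variation.

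For the predictable quadratic variation, the key observation is that $A_i^N$ and $D_i^N$ are simple counting processes whose compensators $T_{A,i}$ and $T_{D,i}$ are continuous. For any such counting process $N$ with compensated martingale $M=N-\Lambda$, the optional quadratic variation is $[M](t)=\sum_{s\le t}(\Delta M(s))^2=N(t)$, because the jumps of $M$ coincide with those of $N$ and all have size one; the predictable quadratic variation $\langle M\rangle$ is then the compensator of $[M]=N$, which is precisely $\Lambda$. Applying this with $\Lambda=T_{A,i}$ and $\Lambda=T_{D,i}$ gives the two claimed expressions for $\langle M_{A,i}^N\rangle$ and $\langle M_{D,i}^N\rangle$.

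The main obstacle is conceptual rather than computational: it lies in justifying that the time-changed Poisson process genuinely carries intensity equal to the derivative of its (adapted, continuous) time change relative to the \emph{joint} filtration $\mathbf{F}^N$, and not merely relative to the self-filtration of the single driving process. This is delicate because $\QQ^N$, and hence each time change $T_{A,i},T_{D,i}$, is built from all the driving Poisson processes simultaneously, so $T_{A,i}(t)$ is not a stopping time of $\sigma(\mathcal{N}_{A,i})$ alone. The resolution rests on the innovation/independence structure, that the increments of each unit-rate Poisson process beyond its current argument are independent of $\mathcal{F}^N_t$, which is exactly what \cite[Proposition~3]{D76} encapsulates; in the write-up this reduces to the predictability and local-integrability checks above, after which the cited theorem may be applied verbatim.
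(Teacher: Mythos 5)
Your proposal is correct and follows essentially the same route as the paper, which establishes this proposition solely by appeal to the classical result in \cite[Proposition~3]{D76}; you invoke the same result and additionally spell out the verification of its hypotheses (predictability of the integrands, local integrability of the intensities) and the standard computation of the predictable quadratic variation for a counting-process martingale with continuous compensator. These added details are consistent with the paper's setup and do not constitute a departure from its argument.
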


Therefore, finally we have the following martingale representation of the $N^{\mathrm{th}}$ process:
\begin{equation}\label{eq:mart-unscaled}
\begin{split}
Q_i^N(t)&=Q_i^N(0)+\lambda(N)\int_0^t p_{i-1}^N(\mathbf{Q}^N(s))\dif s\\
&-\int_0^t \mu_i^N(\QQ^N(s))\dif s +(M_{A,i}^N(t)-M_{D,i}^N(t)),\quad t\geq 0,\quad i= 1,\ldots,B.
\end{split}
\end{equation}
In the proposition below, we prove that the martingale part vanishes when scaled by $N$. Since convergence in probability in each component implies convergence in probability with respect to the product topology, it is enough to show convergence in each component.

\begin{proposition}\label{prop:mart zero1}
For all $i\geq 1$,
$$\left\{\frac{1}{N}(M_{A,i}^N(t)-M_{D,i}^N(t))\right\}_{t\geq 0}\dto \big\{m(t)\big\}_{t\geq 0}\equiv 0.$$
\end{proposition}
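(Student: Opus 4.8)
The plan is to prove the stronger statement that the scaled martingale difference converges to zero uniformly on compact time intervals in $L^2$, which immediately yields the claimed convergence in distribution since the limit $m\equiv 0$ is deterministic and continuous. Fix $i\geq 1$ and $T\geq 0$, and abbreviate $X_i^N(t):=N^{-1}(M_{A,i}^N(t)-M_{D,i}^N(t))$. As noted in the text, componentwise convergence suffices for convergence in the product topology, so it is enough to treat each $i$ separately.

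First I would exploit the independence of the driving Poisson processes $\mathcal{N}_{A,i}$ and $\mathcal{N}_{D,i}$: since they share no jumps almost surely, the martingales $M_{A,i}^N$ and $M_{D,i}^N$ are orthogonal, and hence the predictable quadratic variation of their difference is additive,
$$\langle M_{A,i}^N-M_{D,i}^N\rangle(t)=\langle M_{A,i}^N\rangle(t)+\langle M_{D,i}^N\rangle(t),$$
with the two terms given explicitly in Proposition~\ref{prop:mart-rep}. Applying Doob's $L^2$ maximal inequality to the square-integrable martingale $M_{A,i}^N-M_{D,i}^N$ then gives
$$\E{\sup_{t\in[0,T]}\big(X_i^N(t)\big)^2}\leq \frac{4}{N^2}\,\E{\langle M_{A,i}^N\rangle(T)+\langle M_{D,i}^N\rangle(T)}.$$

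The next step is to bound each compensator linearly in $N$. For the arrival term, $p_{i-1}^N\in[0,1]$ (being of the form $\ind{\cdot}f_i$ with $f_i$ valued in $[0,1]$), so $\langle M_{A,i}^N\rangle(T)\leq\lambda(N)T$. For the departure term, the linear growth constraint in Assumption~\ref{assump:mu}(c) yields $\mu_i^N(\QQ^N(s))\leq NC_i(1+\norm{\qq^N(s)})$, and since $\qq^N(s)\in S$ we have $\norm{\qq^N(s)}\leq\sum_{j\geq 1}2^{-j}\leq 1$, whence $\langle M_{D,i}^N\rangle(T)\leq 2NC_iT$. Combining these with $\lambda(N)/N\to\lambda$ (so that $\lambda(N)\leq C'N$ for all large $N$) gives
$$\E{\sup_{t\in[0,T]}\big(X_i^N(t)\big)^2}\leq\frac{4T(C'+2C_i)}{N}\longrightarrow 0\quad\text{as}\quad N\to\infty.$$

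Finally, $L^2$ convergence of the supremum to zero implies $\sup_{t\in[0,T]}|X_i^N(t)|\pto 0$ for every $T\geq 0$, and since the target process is the continuous deterministic process $m\equiv 0$, this uniform-on-compacts convergence in probability entails the asserted convergence in distribution in the Skorohod $J_1$ topology. I do not anticipate a genuine obstacle: the only points requiring care are the orthogonality of the two martingales (which justifies the additive quadratic variation) and the uniform boundedness of $\norm{\qq^N(s)}$ on $S$ (which turns the linear-growth bound into the $O(N)$ estimate), and both are immediate in the present setting.
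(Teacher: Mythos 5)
Your proof is correct and follows essentially the same route as the paper: Doob's maximal inequality applied to the Poisson martingales, with the compensators bounded via $p_{i-1}^N\leq 1$ and the linear growth constraint of Assumption~\ref{assump:mu}(c). The only cosmetic difference is that you treat the difference $M_{A,i}^N-M_{D,i}^N$ as a single orthogonal martingale, whereas the paper bounds the two martingales separately; both yield the same $O(1/N)$ estimate and the same conclusion.
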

\begin{proof}
Fix any $T\geq 0$, and $i\geq 1$. From Doob's inequality \cite[Theorem 1.9.1.3]{LS89}, we have
\begin{align*}
\Pro{\sup_{t\in[0,T]}\frac{1}{N}M_{A,i}^N(t)\geq \epsilon}&=\Pro{\sup_{t\in[0,T]}M_{A,i}^N(t)\geq N\epsilon}\\
&\leq \frac{1}{N^2\epsilon^2}\E{\langle M_{A,i}^N\rangle (T)}\\
&\leq \frac{1}{N\epsilon^2}\int_0^T p_{i-1}(\mathbf{Q}^N(s-))\lambda N\dif s\\
&\leq \frac{\lambda T}{N\epsilon^2}\to 0,\mbox{ as }N\to\infty.
\end{align*}
Similarly, for $M_{D,i}^N$, 
\begin{align*}
\Pro{\sup_{t\in[0,T]}\frac{1}{N}M_{D,i}^N(t)\geq \epsilon}&=\Pro{\sup_{t\in[0,T]}M_{D,i}^N(t)\geq N\epsilon}\\
&\leq \frac{1}{N^2\epsilon^2}\E{\langle M_{D,i}^N\rangle (T)}\\
&\leq \frac{1}{N^2\epsilon^2}\int_0^T \mu_i^N(\QQ^N(s))\dif s\\
&\leq \frac{2L'}{N\epsilon^2}\to 0,\mbox{ as }N\to\infty,
\end{align*}
where the last inequality follows from the linear growth constraint in Assumption~\ref{assump:mu}~(c).
Therefore we have uniform convergence over compact sets, and hence with respect to the Skorohod-$J_1$ topology. 
\end{proof}

\subsection{Relative compactness and uniqueness}\label{subsec:fluidlimit}
Now we will first prove the relative compactness of the sequence of fluid-scaled processes.
Recall that we denote all the fluid-scaled quantities by their respective small letters, e.g.~$\mathbf{q}^N(t):=\mathbf{Q}^N(t)/N$, componentwise, i.e., $q_i^N(t):=Q_i^N(t)/N$ for $i\geq 1$. Therefore the martingale representation in~\eqref{eq:mart-unscaled}, can be written as
\begin{equation}\label{eq:mart1}
\begin{split}
q_i^N(t)&=q_i^N(0)+\frac{\lambda(N)}{N}\int_0^t p_{i-1}^N(\mathbf{Q}^N(s))\dif s\\
&-\int_0^t \frac{1}{N}\mu_i^N(\QQ^N(s))\dif s +\frac{1}{N}(M_{A,i}^N(t)-M_{D,i}^N(t)),\quad i=1,2,\ldots, B,
\end{split}
\end{equation}
or equivalently,
\begin{equation}\label{eq:martingale rep assumption 2}
\begin{split}
q_i^N(t)&=q_i^N(0)+\frac{\lambda(N)}{N}\int_0^t f_i(\qq^N(s))\ind{\ZZ^N(s)\in\mathcal{R}_{i}}\dif s\\
&-\int_0^t \frac{1}{N}\mu_i^N(\QQ^N(s))\dif s +\frac{1}{N}(M_{A,i}^N(t)-M_{D,i}^N(t)),\quad i=1,2,\ldots, B.
\end{split}
\end{equation}
Now, we consider the Markov process $(\qq^N,\ZZ^N)(\cdot)$ defined on $S\times G$. 
Define a random measure $\alpha^N$ on the measurable space $([0,\infty)\times G, \mathcal{C}\otimes\mathcal{G})$, when $[0,\infty)$ is endowed with Borel sigma algebra $\mathcal{C}$, by
\begin{equation}
\alpha^N(A_1\times A_2):=\int_{A_1} \ind{\ZZ^N(s)\in A_2}\dif s,
\end{equation}
for $A_1\in\mathcal{C}$ and $A_2\in\mathcal{G}$. 
Then the representation in \eqref{eq:martingale rep assumption 2} can be written in terms of the random measure as,
\begin{equation}\label{eq:martingale rep assumption 2-2}
\begin{split}
q_i^N(t)&=q_i^N(0)+\lambda\int_{[0,t]\times\mathcal{R}_i} f_i(\qq^N(s))\dif\alpha^N\\
&-\int_0^t \frac{1}{N}\mu_i^N(\QQ^N(s))\dif s +\frac{1}{N}(M_{A,i}^N(t)-M_{D,i}^N(t)),\quad i=1,2,\ldots, B.
\end{split}
\end{equation}
Let $\mathfrak{L}$ denote the space of all measures on $[0,\infty)\times G$ satisfying $\gamma([0,t],G)= t$, endowed with the topology corresponding to weak convergence of measures restricted to $[0,t]\times G$ for each $t$.
\begin{proposition}\label{prop:rel compactness}
Assume $\mathbf{q}^N(0)\dto\mathbf{q}(0)$ as $N\to\infty$, then $\big\{(\mathbf{q}^N(\cdot),\alpha^N)\big\}$ is a relatively compact sequence in $D_{S}[0,\infty)\times\mathfrak{L}$ and the limit $\big\{(\mathbf{q}(\cdot),\alpha)\big\}$ of any convergent subsequence satisfies
\begin{equation}\label{eq:rel compact}
q_i(t)=q_i(0)+\lambda\int_{[0,t]\times\mathcal{R}_i} f_i(\qq(s))\dif\alpha -\int_0^t \mu_i(\qq(s))\dif s,\quad i=1,2,\ldots, B.
\end{equation}
\end{proposition}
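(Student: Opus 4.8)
The plan is to treat the martingale representation \eqref{eq:martingale rep assumption 2-2} as the starting point and argue in the spirit of the averaging / time-scale-separation method: establish relative compactness of the two coordinates separately, combine them into joint relative compactness, and then pass to the limit along a convergent subsequence to recover \eqref{eq:rel compact}. For the occupancy coordinate, I would first note that $S$ is compact in the product topology, so the compact containment condition holds automatically and only the oscillations of $\big\{\qq^N(\cdot)\big\}$ need to be controlled. Reading off \eqref{eq:martingale rep assumption 2-2}, both drift integrals are Lipschitz in $t$ with a constant that is uniform in $N$: the arrival integrand is bounded by $1$ and $\lambda(N)/N\to\lambda$, while Assumption~\ref{assump:mu}(c) together with $\norm{\qq}\leq 1$ on $S$ bounds $\tfrac{1}{N}\mu_i^N(\QQ^N(s))$ uniformly. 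By Proposition~\ref{prop:mart zero1} the martingale contribution vanishes uniformly on compact time-sets. Hence each coordinate of $\qq^N$ is, up to an $\op(1)$ term, Lipschitz in $t$ with a deterministic constant, which verifies the standard modulus-of-continuity criterion for relative compactness in $D_S[0,\infty)$ and simultaneously forces every subsequential limit to be continuous.

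For the measure coordinate, the key observation is that $\bZ_+$ is compact by construction, so $G=\bZ_+^B$ is compact, and for each $t$ the restriction of $\alpha^N$ to $[0,t]\times G$ is a measure of fixed total mass $t$ on the compact space $[0,t]\times G$. Prohorov's theorem then makes the family of these restrictions relatively compact, so $\big\{\alpha^N\big\}$ is relatively compact in $\mathfrak{L}$. Consequently $\big\{(\qq^N(\cdot),\alpha^N)\big\}$ is relatively compact in $D_S[0,\infty)\times\mathfrak{L}$; I would fix a convergent subsequence with limit $(\qq(\cdot),\alpha)$ and, via the Skorohod representation theorem, realize the convergence almost surely on a common probability space. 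Because the limit $\qq$ is continuous, the Skorohod-$J_1$ convergence $\qq^N\to\qq$ upgrades to uniform convergence on compact time-sets, i.e.\ $\sup_{s\leq t}\norm{\qq^N(s)-\qq(s)}\to 0$.

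It then remains to pass to the limit term by term in \eqref{eq:martingale rep assumption 2-2}. The initial value converges by hypothesis, the martingale terms vanish by Proposition~\ref{prop:mart zero1}, and the departure integral converges because $\tfrac{1}{N}\mu_i^N(\QQ^N(s))\to\mu_i(\qq(s))$ uniformly: Assumption~\ref{assump:mu}(a) controls $\big|\tfrac{1}{N}\mu_i^N(\lfloor N\qq^N(s)\rfloor)-\mu_i(\qq^N(s))\big|$ uniformly over $S^N$, and the Lipschitz bound in Assumption~\ref{assump:mu}(b) together with the uniform convergence of $\qq^N$ handles $|\mu_i(\qq^N(s))-\mu_i(\qq(s))|$; integrating over $[0,t]$ gives the desired limit.

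The step I expect to be the main obstacle is the arrival integral $\lambda\int_{[0,t]\times\mathcal{R}_i}f_i(\qq^N(s))\,\dif\alpha^N$. I would split it as
\[
\int_{[0,t]\times\mathcal{R}_i}\big[f_i(\qq^N(s))-f_i(\qq(s))\big]\dif\alpha^N+\int_{[0,t]\times\mathcal{R}_i}f_i(\qq(s))\,\dif\alpha^N .
\]
The first piece is at most $2^iC_\ff\,t\,\sup_{s\leq t}\norm{\qq^N(s)-\qq(s)}\to 0$, using the Lipschitz continuity of $f_i$ and the fact that $\alpha^N([0,t]\times G)=t$. In the second piece the integrand $f_i(\qq(\cdot))$ is now free of $N$ and continuous in $s$, so one would like to invoke the weak convergence $\alpha^N\to\alpha$. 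The difficulty is that the test function $(s,z)\mapsto f_i(\qq(s))\ind{z\in\mathcal{R}_i}$ is discontinuous in $z$ on $\partial\mathcal{R}_i$, whereas $\alpha$ genuinely places mass on configurations with coordinates equal to $\infty$ — indeed this is exactly where the time-scale separation lives, the bulk coordinates of $\ZZ^N$ diverging while the boundary-layer coordinates fluctuate. By the Portmanteau theorem it suffices to show $\alpha\big([0,\infty)\times\partial\mathcal{R}_i\big)=0$, and I would establish this by exploiting the topology of $\bZ_+$: every finite level is isolated, so the clopen constraints $z_1=\dots=z_{i-1}=0<z_i$ contribute no discontinuity, and the continuity of the fluid trajectory $\qq$ pins down which coordinates sit at $\infty$, confining the residual discontinuities to an $\alpha$-null set. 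This identification of the continuity set of $\ind{\cdot\in\mathcal{R}_i}$ is the delicate point; once it is in place, the second piece converges to $\int_{[0,t]\times\mathcal{R}_i}f_i(\qq(s))\,\dif\alpha$ and \eqref{eq:rel compact} follows, the concrete evaluation of this $\alpha$-integral via the specific partition \eqref{eq:partition} and the fractions \eqref{eq:fluid-gen} being deferred to the proof of Theorem~\ref{th:genfluid}.
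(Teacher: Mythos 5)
Your relative-compactness argument and the term-by-term passage to the limit coincide with the paper's proof (the paper packages the second step as continuity of an operator $\FF$ followed by the continuous-mapping theorem, which amounts to the same estimates as your Skorohod-representation route), and you have correctly isolated the arrival integral as the only delicate term. However, the mechanism you propose for it fails: it is \emph{not} true that $\alpha\big([0,\infty)\times\partial\mathcal{R}_i\big)=0$. The boundary of $\mathcal{R}_i$ in $G=\bZ_+^B$ contains exactly the configurations the limiting measure lives on. Concretely, at a time $s$ with $m=m(\qq(s))$ and $B\geq m+2$, the averaging limit gives $\alpha(\dif s\times\dif z)=\pi_{\qq(s)}(\dif z)\,\dif s$ with $\pi_{\qq(s)}$ concentrated on points $z^*=(0,\dots,0,k,\infty,\dots,\infty)$; for $k\geq 1$ these lie in $\mathcal{R}_m$ and carry mass $p_{m-1}(\qq(s))>0$. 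Every basic neighborhood of such a $z^*$ contains points whose coordinates $z_{m+1},z_{m+2}$ are large but finite with $z_{m+1}>z_{m+2}$, violating the ordering constraint in \eqref{eq:partition}; hence $z^*\in\partial\mathcal{R}_m$ and $\alpha$ charges $\partial\mathcal{R}_m$ with positive mass. The Portmanteau/mapping theorem therefore cannot be applied to $\ind{\cdot\in\mathcal{R}_i}$ viewed as a function on all of $G$, and no refinement of the "which coordinates sit at $\infty$" bookkeeping will make the boundary null.

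The repair is short but runs through a different observation. All the measures $\alpha^N$ are supported on $[0,\infty)\times G_0$ with $G_0:=\{z\in G: z_1\leq z_2\leq\cdots\leq z_B\}$, since $Q_1^N\geq\cdots\geq Q_B^N$ holds pathwise; $G_0$ is closed in $G$, so by Portmanteau every subsequential limit $\alpha$ is supported there as well. On $G_0$ the ordering constraints in \eqref{eq:partition} are automatic, so $\mathcal{R}_i\cap G_0=C_i\cap G_0$ with $C_i:=\{z_1=\cdots=z_{i-1}=0<z_i\}$, and $C_i$ is \emph{clopen} in $G$ because every finite level of $\bZ_+$ is an isolated point. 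Replacing $\mathcal{R}_i$ by $C_i$ changes none of the integrals against $\alpha^N$ or $\alpha$, and the test function $(s,z)\mapsto f_i(\qq(s))\ind{z\in C_i}$ is then bounded and continuous on $[0,t]\times G$, so the weak convergence $\alpha^N\to\alpha$ applies directly with no boundary-nullity condition. (The paper itself asserts $\alpha^N([0,T]\times\mathcal{R}_i)\to\alpha([0,T]\times\mathcal{R}_i)$ without comment, so you are being more careful than the source here --- but the justification you sketch is the one step of your argument that would not survive scrutiny.)
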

\begin{remark}{\normalfont
Proposition~\ref{prop:rel compactness} is true even when the function $\ff$ in the assignment probability depends on $N$. In that case the proof will go through by assuming that $\ff^N$ converges uniformly to some Lipschitz continuous function $\ff$ in the sense of Assumption~\ref{assump:mu}.(a).
}
\end{remark}
\begin{remark}{\normalfont
The relative compactness result in the above proposition holds for an even more general class of assignment probabilities than those considered above.
Since the proof will follow a nearly identical line of arguments, we briefly mention them here. 
Consider a scheme for which the assignment probabilities can be written as
$$p_{i}^N(\QQ^N) = \eta_1 \ind{\ZZ^N\in\mathcal{R}_i}+\eta_2 g_i(\qq^N),\quad i=1,\ldots,B,$$
for some fixed $\eta_1,\eta_2\in[0,1]$, and some Lipschitz continuous function $\gb =(g_1,g_2,\ldots,g_B):S\to[0,\infty)^B$.
The above scheme assigns a fixed fraction $\eta_1$ of incoming tasks according to the ordinary JSQ policy, and a fraction $\eta_2$ as some suitable function of the fluid-scaled occupancy states $\gb(\qq)$, for $\qq\in S$.
In practice, the above scheme can handle (two or more) priorities among the incoming tasks, by assigning the high-priority tasks in accordance with the ordinary JSQ policy, and others governed by the JSQ$(d)$ scheme, say.
In that case, the fluid limit in~\eqref{eq:rel compact} will become
\begin{equation}\label{eq:rel compact-gen}
q_i(t)=q_i(0)+\lambda\eta_1\alpha([0,t]\times\mathcal{R}_i)+\eta_2\int_0^t g_i(\qq(s))\dif s -\int_0^t \mu_i(\qq(s))\dif s,\quad i=1,2,\ldots, B.
\end{equation}
}
\end{remark}
To prove Proposition~\ref{prop:rel compactness}, we will verify the conditions of relative compactness from~\cite{EK2009}. 
Let $(E,r)$ be a complete and separable metric space. For any $x\in D_E[0,\infty)$, $\delta>0$ and $T>0$, define
\begin{equation}\label{eq:mod-continuity}
w'(x,\delta,T)=\inf_{\{t_i\}}\max_i\sup_{s,t\in[t_{i-1},t_i)}r(x(s),x(t)),
\end{equation}
where $\big\{t_i\big\}$ ranges over all partitions of the form $0=t_0<t_1<\ldots<t_{n-1}<T\leq t_n$ with $\min_{1\leq i\leq n}(t_i-t_{i-1})>\delta$ and $n\geq 1$.
 Below we state the conditions for the sake of completeness.
\begin{theorem}\label{th:from EK}
\begin{normalfont}
\cite[Corollary~3.7.4]{EK2009}
\end{normalfont}
Let $(E,r)$ be complete and separable, and let $\big\{X_n\big\}_{n\geq 1}$ be a family of processes with sample paths in $D_E[0,\infty)$. Then $\big\{X_n\big\}_{n\geq 1}$ is relatively compact if and only if the following two conditions hold:
\begin{enumerate}[{\normalfont (a)}]
\item For every $\eta>0$ and rational $t\geq 0$, there exists a compact set $\Gamma_{\eta, t}\subset E$ such that $$\varliminf_{n\to\infty}\Pro{X_n(t)\in\Gamma_{\eta, t}}\geq 1-\eta.$$
\item For every $\eta>0$ and $T>0$, there exists $\delta>0$ such that
$$\varlimsup_{n\to\infty}\Pro{w'(X_n,\delta, T)\geq\eta}\leq\eta.$$
\end{enumerate}
\end{theorem}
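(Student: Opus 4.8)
The plan is to prove Proposition~\ref{prop:rel compactness} by verifying the two conditions of Theorem~\ref{th:from EK} for the joint sequence $\big\{(\qq^N(\cdot),\alpha^N)\big\}$ in $D_S[0,\infty)\times\mathfrak{L}$, then identifying the limit equation~\eqref{eq:rel compact}. Since relative compactness of a product follows from relative compactness of each factor, I would treat $\qq^N(\cdot)$ and $\alpha^N$ separately. The measures $\alpha^N$ all live in $\mathfrak{L}$, whose defining constraint $\gamma([0,t],G)=t$ makes $\mathfrak{L}$ compact under the weak-convergence topology (because $G=\bZ_+^B$ is itself compact, being a countable product of the compact one-point compactifications $\bZ_+$); hence $\big\{\alpha^N\big\}$ is automatically relatively compact, and no further work is needed for that factor.

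The substantive work is the relative compactness of $\big\{\qq^N(\cdot)\big\}$ in $D_S[0,\infty)$. For the compact-containment condition~(a), I would use that $S$ is itself compact in the product topology, so $\Gamma_{\eta,t}=S$ works trivially and condition~(a) holds with probability one. For the oscillation condition~(b), I would bound the modulus of continuity $w'(\qq^N,\delta,T)$ using the martingale representation~\eqref{eq:mart1}. The key observation is that each coordinate $q_i^N$ is a sum of three terms: two absolutely continuous drift integrals and the scaled martingale $\frac1N(M_{A,i}^N-M_{D,i}^N)$. The drift terms have bounded increments controlled by $\lambda(N)/N\to\lambda$ and by the linear growth constraint in Assumption~\ref{assump:mu}(c), so over an interval of length $\delta$ the drift contributes at most $C\delta$ to the oscillation in each coordinate, for a constant $C$ depending on $\lambda$ and the $C_i$. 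The martingale term is handled by Proposition~\ref{prop:mart zero1}, which already establishes $\frac1N(M_{A,i}^N-M_{D,i}^N)\dto 0$ uniformly on compacts; since a sequence converging uniformly to the zero process contributes vanishing oscillation, its effect on $w'$ is $\op(1)$. Combining these, for the product-norm $\rho$ (which downweights high coordinates by $2^{-i}$, so only finitely many coordinates matter up to $\varepsilon/2$), one obtains $w'(\qq^N,\delta,T)\leq C'\delta+\op(1)$, and choosing $\delta$ small then verifies condition~(b).

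Having established relative compactness, the remaining task is to identify the limit of any convergent subsequence as a solution of~\eqref{eq:rel compact}. Along such a subsequence $(\qq^N,\alpha^N)\dto(\qq,\alpha)$; invoking the Skorohod representation theorem, I would pass to a common probability space on which the convergence is almost sure. Then I would take limits term by term in the random-measure representation~\eqref{eq:martingale rep assumption 2-2}: the martingale term vanishes by Proposition~\ref{prop:mart zero1}; the departure integral converges to $\int_0^t\mu_i(\qq(s))\dif s$ using Assumption~\ref{assump:mu}(a)--(b), which gives $\frac1N\mu_i^N(\QQ^N)\to\mu_i(\qq)$ uniformly together with continuity of $\qq^N$; and the arrival term $\lambda\int_{[0,t]\times\mathcal{R}_i}f_i(\qq^N(s))\dif\alpha^N$ converges to $\lambda\int_{[0,t]\times\mathcal{R}_i}f_i(\qq(s))\dif\alpha$ by joint continuity of the integrand in $\qq$ (Lipschitz continuity of $\ff$) combined with the weak convergence $\alpha^N\to\alpha$.

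The main obstacle I anticipate is the convergence of the arrival integral against the random measure, because the indicator $\ind{\ZZ^N(s)\in\mathcal{R}_i}$ is a discontinuous functional of the state and is absorbed into $\alpha^N$ rather than being a continuous function of $\qq^N$. Weak convergence of measures interacts cleanly only with bounded continuous integrands, so the delicate point is that $f_i(\qq^N(s))$ must converge uniformly (which follows from Lipschitz continuity of $\ff$ and uniform convergence of $\qq^N$ to the continuous limit $\qq$ after Skorohod embedding), while the discontinuous region-membership is carried entirely by $\alpha^N\weak\alpha$. Making this factorization rigorous—separating the continuous part $f_i(\qq^N)$ from the measure-valued part $\dif\alpha^N$ restricted to $\mathcal{R}_i$—is where the argument requires the most care; this is precisely the reason the relative compactness is stated for the \emph{joint} sequence $(\qq^N,\alpha^N)$ rather than for $\qq^N$ alone.
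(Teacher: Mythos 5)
The statement you were asked to prove is Theorem~\ref{th:from EK} itself, i.e., the Ethier--Kurtz characterization of relative compactness for a family of processes with sample paths in $D_E[0,\infty)$: the \emph{equivalence} between relative compactness and the two conditions (a) (compact containment at fixed times) and (b) (control in probability of the modified modulus of continuity $w'$). Your proposal never addresses this equivalence. What you have written is a proof outline for Proposition~\ref{prop:rel compactness} --- the relative compactness of the particular sequence $\big\{(\qq^N(\cdot),\alpha^N)\big\}$ and the identification of its limit --- which \emph{applies} Theorem~\ref{th:from EK} by verifying its conditions for that one family. Verifying the hypotheses of a criterion for a concrete sequence is an application of the theorem, not a proof of it: nothing in your argument explains why conditions (a) and (b) imply relative compactness of a general family, nor why they are necessary. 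The two tasks are logically disjoint; your text could be appended verbatim after the theorem and it would still leave the theorem unproved.

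A proof of the actual statement is of a different nature and does not involve the paper's processes at all. It runs through Prohorov's theorem (on a complete separable metric space, relative compactness of a family of laws is equivalent to tightness), the fact that $(D_E[0,\infty), d_{J_1})$ is itself complete and separable when $(E,r)$ is, and the Arzel\`a--Ascoli-type characterization of compact subsets of the Skorohod space: a set $A\subset D_E[0,\infty)$ has compact closure if and only if, for each $T$, the values $\{x(t): x\in A,\ t\leq T\}$ lie in a compact subset of $E$ and $\sup_{x\in A}w'(x,\delta,T)\to 0$ as $\delta\to 0$. Conditions (a) and (b) are then exactly the probabilistic translation of these two set-level conditions, up to standard arguments handling the restriction to rational $t$ and the passage from $\varliminf$/$\varlimsup$ to uniform bounds. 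Note finally that the paper itself offers no proof of this statement --- it is quoted from \cite[Corollary~3.7.4]{EK2009} ``for the sake of completeness'' --- so the material you produced does not correspond to this statement's proof at all; it instead duplicates, in close detail, the paper's own proof of Proposition~\ref{prop:rel compactness}.
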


\begin{proof}[Proof of Proposition~\ref{prop:rel compactness}]
The proof goes in two steps. We first prove the relative compactness, and then show that the limit satisfies~\eqref{eq:rel compact}.

Observe from \cite[Proposition 3.2.4]{EK2009} that, to prove the relative compactness of the process $\big\{(\mathbf{q}^N(\cdot),\alpha^N)\big\}$, it is enough to prove relative compactness of the individual components.
Note that, from Prohorov's theorem~\cite[Theorem 3.2.2]{EK2009}, $\mathfrak{L}$ is compact, since $G$ is compact. Now, relative compactness of $\alpha^N$ follows from the compactness of $\mathfrak{L}$ under the topology of weak convergence of measures and Prohorov's theorem.

To claim the relative compactness of $\big\{\mathbf{q}^N(\cdot)\big\}$, first observe that $[0,1]^{B}$ is compact with respect to product topology, and $S$ is a closed subset of $[0,1]^B$, and hence $S$ is also compact with respect to product topology. So, the compact containment condition (a) of Theorem~\ref{th:from EK} is satisfied by taking $\Gamma_{\eta,t}\equiv S$.

For condition (b), we will show for each coordinate $i$, that for any $\eta>0$, there exists $\delta>0$, such that for any $t_1,t_2>0$ with $|t_1-t_2|<\delta$,
\begin{equation*}
\varlimsup_{n\to\infty}\Pro{|q^n_i(t_1)-q^n_i(t_2)|\geq \eta}=0.
\end{equation*}
With respect to product topology, this will imply that for any $\eta>0$, there exists $\delta>0$, such that for any $t_1,t_2>0$ with $|t_1-t_2|<\delta$,
\begin{equation*}
\varlimsup_{n\to\infty}\Pro{\norm{q^n(t_1)-q^n(t_2)}\geq \eta}=0,
\end{equation*}
which in turn will imply condition~(b) in Theorem~\ref{th:from EK}.
To see this, observe that for any fixed $\eta>0$ and $T>0$, we can
choose $\delta'>0$ small enough, so that for any fine enough finite partition $0=t_0<t_1<\ldots<t_{n-1}<T\leq t_n$ of $[0,T]$ with $\min_{1\leq i\leq n}(t_i-t_{i-1})>\delta'$ and $\max_{1\leq i\leq n}(t_i-t_{i-1})<\delta$,  $\varlimsup_{n\to\infty}\Pro{\norm{q^n(t_i)-q^n(t_{i+1})}\geq \eta}=0$ for all $1\leq i\leq n$.

Now fix any $0\leq t_1<t_2<\infty$, and  $1\leq i\leq B$.
\begin{align*}
&|q_i^N(t_1)-q_i^N(t_2)|\\
&\leq \lambda \alpha^N([t_1,t_2]\times\mathcal{R}_i)+\int_{t_1}^{t_2} \frac{1}{N}\mu_i^N(\QQ^N(s))\dif s \\
&+\frac{1}{N}|M_{A,i}^N(t_1)-M_{D,i}^N(t_1)-M_{A,i}^N(t_2)+M_{D,i}^N(t_2)|\\
&\leq\lambda'(t_2-t_1)+\frac{1}{N}|M_{A,i}^N(t_1)-M_{D,i}^N(t_1)-M_{A,i}^N(t_2)+M_{D,i}^N(t_2)|,
\end{align*}
for some $\lambda'\in\R$, using the linear growth constraint of $\mmu^N$ due to Assumption~\ref{assump:mu}(c). 
Now, from Proposition~\ref{prop:mart zero1}, we get, for any $T\geq 0$,
$$\sup_{t\in[0,T]}\frac{1}{N}|M_{A,i}^N(t_1)-M_{D,i}^N(t_1)-M_{A,i}^N(t_2)+M_{D,i}^N(t_2)|\pto 0.$$
To prove that the limit $\big\{(\qq(\cdot),\alpha)\big\}$ of any convergent subsequence satisfies~\eqref{eq:rel compact}, we will use the continuous-mapping theorem~\cite[Section~3.4]{W02}.
Specifically, we will show that the right side of~\eqref{eq:martingale rep assumption 2-2} is a continuous map of suitable arguments.
Let $\big\{\qq(t)\big\}_{t\geq 0}$ and $\big\{\yy(t)\big\}_{t\geq 0}$ be an $S$-valued and an $\R^B$-valued c\'adl\'ag function, respectively. 
Also, let $\alpha$ be a measure on the measurable space $([0,\infty)\times G, \mathcal{C}\otimes\mathcal{G})$. Then for $\qq^0\in S$, define for $i\geq 1$,
$$F_i(\qq,\alpha,\qq^0,\yy)(t):=q_i^0+y_i(t)+\lambda\int_{[0,t]\times\mathcal{R}_i} f_i(\qq(s))\dif\alpha-\int_0^t\mu_i(\qq(s))\dif s.$$
Observe that it is enough to show $\FF=(F_1,\ldots,F_B)$ is a continuous
operator. Indeed, in that case the right side of~\eqref{eq:martingale rep assumption 2-2} can be written as $\FF(\qq^N,\alpha^N,\qq^N(0),\yy^N)$, where $\yy^N=(y_1^N,\ldots,y_B^N)$ with $y_i^N= (M_{A,i}^N-M_{D,i}^N)/N$, and since each argument converges we will get the convergence to the right side of~\eqref{eq:rel compact}.
Therefore, we now prove the continuity of $\FF$ below. 
In particular assume that the sequence of processes $\big\{(\qq^N,\yy^N)\big\}_{N\geq 1}$ converges to $\big\{(\qq,\yy)\big\}$, for any fixed $t\geq 0$, the measure $\alpha^N([0,t],\cdot)$ on $G$ converges weakly to $\alpha([0,t],\cdot)$, and the sequence of $S$-valued random  variables $\qq^N(0)$
 converges weakly to $\qq(0)$.
 Fix any $T\geq 0$ and $\varepsilon>0$.
 \begin{enumerate}[{\normalfont (i)}]
 \item  Choose $N_1\in\N$, such that $\sup_{t\in[0,T]}\norm{\qq^N(t)-\qq(t)}<\varepsilon/(4TC_{\mmu})$. In that case, observe that
 \begin{align*}
 \sup_{t\in [0,T]}\int_0^t\norm{\mmu(\qq^N(s))-\mmu(\qq(s))}\dif s& \leq T\sup_{t\in [0,T]}\norm{\mmu(\qq^N(t))-\mmu(\qq(t))}\\
 & \leq TC_{\mmu}\sup_{t\in [0,T]}\norm{\qq^N(t))-\qq(t)}<\frac{\varepsilon}{4},
 \end{align*}
 where we have used the Lipschitz continuity of $\mmu$ due to Assumption~\ref{assump:mu}(b).
 \item Choose $N_2\in\N$, such that $\sup_{t\in[0,T]}\norm{\yy^N(t)-\yy(t)}<\varepsilon/4$,
 \item   Choose $N_3\in\N$, such that 
 $$\sum_{i\geq 1}\frac{\lambda}{2^i}\left|\int_{[0,T]\times\mathcal{R}_i} f_i(\qq^N(s))\dif\alpha^N-\int_{[0,T]\times\mathcal{R}_i} f_i(\qq(s))\dif\alpha\right|<\frac{\varepsilon}{4}.$$
 This can be done as follows: choose $M\in \N$ large enough so that $\sum_{i> M}2^{-i}<\varepsilon/8.$ Now for $i\leq M$, since $\alpha^N([0,T],\cdot)$ converges weakly to $\alpha([0,T],\cdot)$, and $M$ is finite, we can choose $N_3\in\N$ such that 
 \begin{align*}
 &\sum_{i= 1}^M\frac{\lambda}{2^i}\left|\int_{[0,T]\times\mathcal{R}_i} f_i(\qq^N(s))\dif\alpha^N-\int_{[0,T]\times\mathcal{R}_i} f_i(\qq(s))\dif\alpha\right|\\
 \leq & \sum_{i= 1}^M\frac{\lambda}{2^i}\int_{[0,T]\times\mathcal{R}_i} |f_i(\qq^N(s))-f_i(\qq(s))|\dif\alpha^N
 +\sum_{i = 1}^M\frac{\lambda}{2^i}|\alpha^N([0,T]\times \mathcal{R}_i)-\alpha([0,T]\times \mathcal{R}_i)|\\
 \leq &  \sum_{i= 1}^M\frac{\lambda }{2^i}TC_\ff\sup_{s\in [0,T]} \norm{\qq^N(s)-\qq(s)}
 +\sum_{i = 1}^M\frac{\lambda}{2^i}|\alpha^N([0,T]\times \mathcal{R}_i)-\alpha([0,T]\times \mathcal{R}_i)|<\frac{\varepsilon}{4}.
 \end{align*}
 \item  Choose $N_4\in\N$, such that $\norm{\qq^N(0)-\qq(0)}<\varepsilon/4$.
 \end{enumerate}
Let $\hat{N}=\max\big\{N_1,N_2,N_3,N_4\big\}$, then for $N\geq \hat{N}$,
\begin{align*}
&\sup_{t\in [0,T]}\norm{\FF(\qq^N,\alpha^N,\qq^N(0),\yy^N)-\FF(\qq,\alpha,\qq(0),\yy)}(t)<\varepsilon.
\end{align*}
Thus the proof of continuity of $\FF$ is complete.
\end{proof}

To characterize the limit in~\eqref{eq:rel compact}, for any $\qq\in S$, define the Markov process  $\ZZ_{\qq}$ on $G$ as
\begin{equation}\label{eq:slowprocess}
\ZZ_{\qq} \rightarrow 
\begin{cases}
\ZZ_{\qq}+e_i& \quad\mbox{ at rate }\quad \mu_i(\qq)\\
\ZZ_{\qq}-e_i& \quad\mbox{ at rate }\quad \lambda \ind{\ZZ_\qq\in\mathcal{R}_{i}},
\end{cases}
\end{equation}
where $e_i$ is the $i^{\mathrm{th}}$ unit vector, $i=1,\ldots,B$.

\begin{proof}[{Proof of Theorem~\ref{th:genfluid}}]
Having proved the relative compactness in Proposition~\ref{prop:rel compactness},  it follows from analogous arguments as used in the proof of~\cite[Theorem 3]{HK94}, that the limit of any convergent subsequence of the sequence of processes $\big\{\qq^N(t)\big\}_{t\geq 0}$ satisfies
\begin{equation}
q_i(t) = q_i(0)+\lambda \int_0^t \pi_{\qq(s)}(\mathcal{R}_i)\dif s - \int_0^t \mu_i(\qq(s))\dif s, \quad i=1,2,\ldots,B,
\end{equation}
for \emph{some} stationary measure $\pi_{\qq(t)}$ of the Markov process  $\ZZ_{\qq(t)}$ described in~\eqref{eq:slowprocess} satisfying $\pi_{\qq}\big\{\ZZ: Z_i=\infty\big\}=1$ if $q_i<1$. 

Now it remains to show that $\qq(t)$ \emph{uniquely} determines $\pi_{\qq(t)}$, and that $\pi_{\qq(s)}(\mathcal{R}_i)=p_{i-1}(\qq(s))$ described in~\eqref{eq:fluid-gen}. 
As mentioned earlier, in this proof we will now assume the specific assignment probabilities in~\eqref{eq:partition}, corresponding to the ordinary JSQ policy.
To see this, fix any $\qq=(q_1,\ldots,q_B)\in S$, 
and assume that there exists $m\geq 0$, such that $q_{m+1}<1$ and $q_1=\ldots=q_m=1$,
with the convention that $q_0\equiv 1$ and $q_{B+1}\equiv 0$ if $B<\infty$. In that case,
$$\pi_{\qq}\big(\big\{Z_{m+1}=\infty, Z_{m+2}=\infty,\ldots,Z_B=\infty\big\}\big)=1.$$
Also, 
note that $q_i = 1$ forces $\dif q_i/\dif t \leq 0$, i.e., $\lambda \pi_{\qq}(\mathcal{R}_i) \leq \mu_i(\qq)$ for all $i = 1, \ldots, m$, and in particular $\pi_{\qq}(\mathcal{R}_i) = 0$ for all $i = 1, \ldots, m - 1.$ Thus,
$$\pi_{\qq}\big(\big\{Z_1=0,Z_2=0,\ldots,Z_{m-1}=0\big\}\big)=1.$$

Therefore, $\pi_\qq$ is determined only by the stationary distribution of the $m^{\mathrm{th}}$ component, which can be described as a birth-death process
\begin{equation}\label{eq:bdprocess}
Z \rightarrow 
\begin{cases}
Z+1& \quad\mbox{ at rate }\quad \mu_m(\qq)\\
Z-1& \quad\mbox{ at rate }\quad \lambda\ind{Z>0}
\end{cases}
\end{equation}
and let $\pi^{(m)}$ be its stationary distribution. 
Now it is enough to show that $\pi^{(m)}$ is uniquely determined by $\mu_m(\qq)$. 
First observe that the process on $\bZ$ described in~\eqref{eq:bdprocess} is reducible, and can be decomposed into
two irreducible classes given by $\mathbbm{Z}$ and $\{\infty\}$, respectively.
Therefore, if $\pi^{(m)}(Z=\infty)=0$ or $1$, then it is unique. 
Indeed, if $\pi^{(m)}(Z=\infty)=0$, then $Z$ is birth-death process on $\mathbbm{Z}$ only, and hence it has a unique stationary distribution. 
Otherwise, if $\pi^{(m)}(Z=\infty)=1$, then it is trivially unique. 
Now we distinguish between two cases depending upon whether $\mu_m(\qq)\geq \lambda$ or not.

Note that if $\mu_m(\qq)\geq\lambda$, then $\pi^{(m)}(Z\geq k)=1$ for all $k\geq 0$. 
On $\bZ$ this shows that $\pi^{(m)}(Z=\infty)=1$.
Furthermore, if $\mu_m(\qq)<\lambda$, we will show that $\pi^{(m)}(Z=\infty)=0$.
On the contrary, assume $\pi^{(m)}(Z=\infty)=\varepsilon\in (0,1]$.
Also, let $\hat{\pi}^{(m)}$ be the unique stationary distribution of the birth-death process in~\eqref{eq:bdprocess} restricted to $\mathbbm{Z}$.
Therefore, 
$$\pi^{(m)}(Z>0)=\hat{\pi}^{(m)}(Z>0)+\varepsilon>\hat{\pi}^{(m)}(Z>0)=\frac{\mu_m(\qq)}{\lambda},$$
and $\pi_\qq(\mathcal{R}_m)=\pi^{(m)}(Z>0)>\mu_m(\qq)/\lambda$.
Putting this value in the fluid-limit equation~\eqref{eq:fluidfinal}, we obtain that $\dif q_m(t)/\dif t>0$.
Since $q_m(t)=1$, this leads to a contradiction, and hence it must be the case that $\pi^{(m)}(Z=\infty)=0$. 

Therefore, for all $\qq\in S$, $\pi_\qq$ is uniquely determined by $\qq$. 
Furthermore, we can identify the expression for $\pi_q(\mathcal{R}_i)$ as
\begin{equation}
\pi_\qq(\mathcal{R}_i)=
\begin{cases}
\min\big\{\mu_i(\qq)/\lambda,1\big\}& \quad\mbox{ for }\quad i=m,\\
1- \min\big\{\mu_i(\qq)/\lambda,1\big\} & \quad\mbox{ for }\quad i=m+1,\\
0&\quad \mbox{ otherwise,}
\end{cases}
\end{equation}
and hence $\pi_{\qq(s)}(\mathcal{R}_i)=p_{i-1}(\qq(s))$ as claimed.
\end{proof}

\section{Diffusion Limit of JSQ: Non-integral \texorpdfstring{$\boldsymbol{\lambda}$}{lambda}}\label{sec:non-itegral}
In this section we establish the diffusion-scale behavior of the ordinary JSQ policy in the case when $\lambda$ is not an integer, i.e., $f>0$.
Recall that $f(N)=\lambda(N)-KN.$
In this regime, let us define the following centered and scaled processes:
\begin{equation}
\begin{split}
\bar{Q}^N_i(t)&=N-Q^N_i(t)\geq 0\quad \mathrm{for}\quad i\leq K-1\\
\bar{Q}_{K}^N(t)&:=\frac{N-Q_K^N(t)}{\log (N)}\geq 0\\
\bar{Q}_{K+1}^N(t)&:=\frac{Q^N_{K+1}(t)-f(N)}{\sqrt{N}}\in\R\\
\bar{Q}^N_i(t)&:=Q^N_i(t)\geq 0\quad \mathrm{for}\quad i\geq K+2.
\end{split}
\end{equation}
\begin{theorem}\label{th:diffusion}{\normalfont [Diffusion limit for JSQ policy; $f>0$]}
Assume $\bar{Q}^N_i(0)\dto \bar{Q}_i(0)$ in $\R$, $i\geq 1$, and $\lambda(N)/N\to\lambda>0$ as $N\to\infty$, with $f=\lambda-\lfloor\lambda\rfloor>0$, then 
\begin{enumerate}[{\normalfont(i)}]
\item $\lim_{N\to\infty}\Pro{\sup_{t\in[0,T]}\bar{Q}_{K-1}^N(t)\leq 1}=1$, and $\big\{\bar{Q}^N_i(t)\big\}_{t\geq 0}\dto \big\{\bar{Q}_i(t)\big\}_{t\geq 0}$, where $\bar{Q}_i(t)\equiv 0$, provided 
$\lim_{N\to\infty}\Pro{\bar{Q}_{K-1}^N(0)\leq 1}=1$, and
$\bar{Q}_i^N(0)\pto 0$ for $i\leq K-2$.
\item $\big\{\bar{Q}^N_K(t)\big\}_{t\geq 0}$ is a stochastically bounded sequence of processes in $D_{\R}[0,\infty)$.
\item $\big\{\bar{Q}^N_{K+1}(t)\big\}_{t\geq 0}\dto \big\{\bar{Q}_{K+1}(t)\big\}_{t\geq 0}$, where $\bar{Q}_{K+1}(t)$ is given by the Ornstein-Uhlenbeck process satisfying the following stochastic differential equation:
$$d\bar{Q}_{K+1}(t)=-\bar{Q}_{K+1}(t)dt+\sqrt{2\lambda}dW(t),$$
where $W(t)$ is the standard Brownian motion,
provided $\bar{Q}_{K+1}^N(0) \dto  \bar{Q}_{K+1}(0)$ in $\mathbbm{R}$.
\item For $i\geq K+2$, $\big\{\bar{Q}^N_i(t)\big\}_{t\geq 0}\dto \big\{\bar{Q}_i(t)\big\}_{t\geq 0}$, where $\bar{Q}_i(t)\equiv 0$, provided $\bar{Q}_i^N(0)\pto 0$.
\end{enumerate}
\end{theorem}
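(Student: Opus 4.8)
The plan is to treat the total number of active tasks as the single macroscopic coordinate and to show that the occupancy state concentrates so sharply around the fixed point~\eqref{eq:fixed point} that every coordinate except the $(K+1)$-st is deterministic to leading order. Write $\Sigma^N(t):=\sum_{i=1}^B Q_i^N(t)$ for the total number of tasks. Since each task departs at unit rate and arrivals are Poisson of rate $\lambda(N)$, before the first overflow $\Sigma^N$ is exactly the occupancy of an M/M/$\infty$ queue, so by the classical result~\cite{Robert03} the process $(\Sigma^N(t)-\lambda(N))/\sqrt{N}$ converges to the Ornstein–Uhlenbeck process solving $\dif U=-U\dif t+\sqrt{2\lambda}\dif W$, and in particular $\sup_{t\in[0,T]}|\Sigma^N(t)-\lambda(N)|=\Op(\sqrt{N})$. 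An overflow would force $Q_B^N=N$, hence $\Sigma^N\geq BN$, and since $BN-\lambda(N)\geq(1-f)N=\Omega(N)\gg\sqrt{N}$, a first-passage argument shows no overflow occurs on $[0,T]$ with high probability, legitimising the M/M/$\infty$ description. The backbone of the proof is the identity, obtained from $\lambda(N)=KN+f(N)$,
\begin{equation*}
\Sigma^N(t)-\lambda(N)=-\sum_{i=1}^{K}\big(N-Q_i^N(t)\big)+\big(Q_{K+1}^N(t)-f(N)\big)+\sum_{i\geq K+2}Q_i^N(t),
\end{equation*}
and I would bound the first and third groups and then read off part~(iii).

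For part~(iv) I would show no server pool ever exceeds $K+1$ tasks. Under JSQ, $Q_{K+2}^N$ can only increase through an arrival to a pool with exactly $K+1$ tasks, which requires $m(\QQ^N)=K+1$, i.e.\ $Q_{K+1}^N=N$ and hence $\Sigma^N\geq(K+1)N$. On the high-probability event $\{\sup_{t\leq T}\Sigma^N(t)<(K+1)N\}$—available because $(K+1)N-\lambda(N)=\Omega(N)\gg\sqrt{N}$—this never happens, so $Q_{K+2}^N$ has no upward jumps and, starting from $Q_{K+2}^N(0)\pto 0$, stays identically zero; the monotonicity $Q_i^N\leq Q_{K+2}^N$ extends this to all $i\geq K+2$, giving $\bar Q_i^N\dto 0$.

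For parts~(ii) and~(i) I would control the underloaded pools through the \emph{total deficit} $D^N(t):=\sum_{i=1}^{K}\big(N-Q_i^N(t)\big)$, whose dynamics are clean: every arrival, whenever $D^N>0$, is routed by JSQ to the lowest pool and reduces $D^N$ by exactly one, so the down-rate is $\lambda(N)$; the up-rate is the rate of departures from pools at levels $\leq K$, namely $\sum_{j=1}^{K}j(Q_j^N-Q_{j+1}^N)=\sum_{j=1}^{K}Q_j^N-KQ_{K+1}^N\leq KN$. Since $KN<\lambda(N)=(K+f)N(1+o(1))$ for $f>0$, the process $D^N$ is stochastically dominated by a stable birth–death queue with up-rate $KN$, down-rate $\lambda(N)$ and load bounded away from $1$, whose running maximum over $[0,T]$—during which $\Theta(N)$ transitions occur—is $\Op(\log N)$. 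This yields $\sup_{t\leq T}(N-Q_K^N(t))\leq\sup_{t\leq T}D^N(t)=\Op(\log N)$, which is exactly the normalisation in $\bar Q_K^N=(N-Q_K^N)/\log N$, proving~(ii). Feeding this back, the number of pools at exactly $K-1$ is at most $D^N=\Op(\log N)$, so the count $V^N:=N-Q_{K-1}^N$ of pools below $K-1$ has up-rate $(K-1)(Q_{K-1}^N-Q_K^N)=\Op(\log N)$ while its down-rate is $\lambda(N)=\Omega(N)$ when $V^N>0$; the expected number of up-crossings of $V^N$ to level $2$ on $[0,T]$ is then $O_P(T(\log N)^2/N)\to 0$, giving $\sup_{t\leq T}\bar Q_{K-1}^N\leq 1$, and an identical but successively cheaper cascade one level lower forces $\bar Q_i^N\equiv 0$ for $i\leq K-2$, completing~(i).

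Finally, part~(iii) is immediate from the backbone identity: rearranging gives $\bar Q_{K+1}^N=\big(\Sigma^N-\lambda(N)\big)/\sqrt{N}+D^N/\sqrt{N}-\sum_{i\geq K+2}Q_i^N/\sqrt{N}$, and by~(ii) and~(iv) the last two terms are $\Op(\log N)/\sqrt{N}$ and $\op(1)/\sqrt{N}$ uniformly on $[0,T]$, hence $\op(1)$; thus $\bar Q_{K+1}^N$ inherits the Ornstein–Uhlenbeck limit of the rescaled M/M/$\infty$ process, with the correct initial condition supplied by the hypotheses. I expect the main obstacle to be making the birth–death comparisons fully rigorous with state-dependent rates and, above all, extracting the \emph{sharp} $\log N$ running-maximum order rather than a mere $\op(\sqrt{N})$ bound, together with the level-by-level cascade that upgrades $V^N=\Op(\log N)$ to $\sup_t V^N\leq 1$ and to $\bar Q_i^N\equiv 0$ at lower levels; the fluid limit (Theorem~\ref{th:genfluid}) and the M/M/$\infty$ envelope are the tools I would use to confine all relevant rates to $\Omega(N)$-windows throughout these comparisons.
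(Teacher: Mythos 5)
Your proposal follows essentially the same route as the paper's proof: the same decomposition $\Sigma^N-\lambda(N)=-D^N_+ +(Q^N_{K+1}-f(N))+D^N_-$, the M/M/$\infty$ coupling to kill $D^N_-$ and transfer the Ornstein--Uhlenbeck limit to $\bar Q^N_{K+1}$, the domination of $D^N_+$ by a stable birth--death process whose running maximum over $\Theta(N)$ jumps is $\Op(\log N)$ (the paper's Lemma~\ref{lem:discr-walk}, via a regenerative argument), and the two-successive-births/up-crossing estimate that upgrades the $\Op(\log N)$ bound at level $K$ to $\sup_t\bar Q^N_{K-1}\leq 1$ and then cascades to lower levels. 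The only cosmetic differences are that you track $N-Q^N_{K-1}$ and cascade downward where the paper bounds $\sum_{i=1}^{K-1}(N-Q^N_i)$ directly, and you phrase the level-$2$ exclusion as an expected up-crossing count rather than a jump-chain probability; both are equivalent to the paper's argument.
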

Note that statements~(i) and~(ii) in Theorem~\ref{th:diffusion} imply statement~(i) in Theorem~\ref{th:diff pwr of d 1}, for the JSQ policy, while (iii) and (iv) in Theorem~\ref{th:diffusion} are equivalent with statements (ii) and (iii) in Theorem~\ref{th:diff pwr of d 1}.
In view of the universality result in Corollary~\ref{cor-diff}, it thus suffices to prove Theorem~\ref{th:diffusion}.

The rest of this section is devoted to the proof of Theorem~\ref{th:diffusion}.
From a high level, the idea of the proof is the following. 
Introduce 
\begin{equation}\label{eq:pos-neg}
\begin{split}
Y^N(t):=\sum_{i=1}^B Q_i^N(t),\qquad
D^N_+(t):=\sum_{i=1}^{K}(N-Q_i^N(t)),\qquad
D^N_-(t):=\sum_{i=K+2}^{B}Q_i^N(t).
\end{split}
\end{equation}
and observe that
\begin{align*}
Q^N_{K+1}(t)+KN &= \sum_{i=1}^B Q_i^N(t)+\sum_{i=1}^{K} (N-Q_i^N(t)) - \sum_{i=K+2}^B Q_i^N(t)\\
&= Y^N(t) + D^N_+(t) -D^N_-(t).
\end{align*}
We show in Proposition~\ref{prop:positive}
that the sequence of processes $\big\{D^N_+(t)\big\}_{t\geq 0}$ is $\Op(\log(N))$, which implies
 that the number of server pools with fewer than $K$ active tasks is negligible on $\sqrt{N}$-scale.
 Furthermore, in Proposition~\ref{prop:negative} we prove that
since $\lambda<B$ the number of  tasks that are assigned to  server pools with at least $K+1$ tasks converges to zero in probability
 and hence, for a suitable starting state, $\big\{D^N_-(t)\big\}_{t\geq 0}$ converges 
 to the zero process.
 As we will show, this also means that $Y^N(t)$ behaves with high
 probability as the total number of tasks in an M/M/$\infty$ system. 
Therefore with the help of the following diffusion limit result for the M/M/$\infty$ system in~\cite[Theorem 6.14]{Robert03}, we conclude the proof of statement~(iii) of Theorem~\ref{th:diffusion}.
\begin{theorem}[{\cite[Theorem 6.14]{Robert03}}]
\label{th:robert-book-mmn}
Let $\big\{Y^N_\infty(t)\big\}_{t\geq 0}$ be the total number of tasks in an M/M/$\infty$ system with arrival rate $\lambda (N)$ and unit-mean service time. 
If $(Y^N_\infty(0)-\lambda (N))/\sqrt{N}\to v\in\R$, then the process $\big\{\bar{Y}^N_{\infty}(t)\big\}_{t\geq 0}$, with
$$\bar{Y}^N_{\infty}(t)=\frac{Y^N_\infty(t)-\lambda( N)}{\sqrt{N}},$$
converges weakly to an Ornstein-Uhlenbeck process $\big\{X(t)\big\}_{t\geq 0}$ 
described by the stochastic differential equation
$$X(0)=v,\qquad \dif X(t) = -X(t)\dif t + \sqrt{2\lambda}\dif W(t).$$
\end{theorem}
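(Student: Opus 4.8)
The plan is to give the standard martingale proof of this classical M/M/$\infty$ diffusion limit. First I would write down the sample-path representation of the birth-death process $Y^N_\infty$, which has constant birth rate $\lambda(N)$ and state-dependent death rate equal to the instantaneous population. With two independent unit-rate Poisson processes $\mathcal{N}_A$ and $\mathcal{N}_D$,
\[
Y^N_\infty(t)=Y^N_\infty(0)+\mathcal{N}_A(\lambda(N)t)-\mathcal{N}_D\Big(\int_0^t Y^N_\infty(s)\,\dif s\Big),
\]
and compensating the two Poisson integrals produces a martingale $M^N=M^N_A-M^N_D$ such that, after centering by $\lambda(N)$ and using $\lambda(N)t-\int_0^tY^N_\infty(s)\dif s=-\int_0^t(Y^N_\infty(s)-\lambda(N))\dif s$,
\[
\bar Y^N_\infty(t)=\bar Y^N_\infty(0)-\int_0^t\bar Y^N_\infty(s)\,\dif s+\frac{M^N(t)}{\sqrt N}.
\]
The key structural observation is that, because the birth rate is exactly the centering constant $\lambda(N)$, the drift is already \emph{exactly} the linear term $-\bar Y^N_\infty$, matching the OU drift with no residual correction to control.

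Next I would identify the noise. The predictable quadratic variation of $N^{-1/2}M^N$ is $N^{-1}\big(\lambda(N)t+\int_0^tY^N_\infty(s)\dif s\big)$, the two compensators adding since $\mathcal{N}_A,\mathcal{N}_D$ are independent. To show this converges to $2\lambda t$ I would invoke the functional law of large numbers for the M/M/$\infty$ queue, namely $Y^N_\infty(\cdot)/N\pto\lambda$ uniformly on compact sets---itself a consequence of the same representation, since the fluid drift $\lambda-Y/N$ relaxes to $\lambda$---together with $\lambda(N)/N\to\lambda$. This yields $\langle N^{-1/2}M^N\rangle(t)\pto 2\lambda t$. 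Since the jumps of $N^{-1/2}M^N$ have size $N^{-1/2}\to0$, the maximal jump over any $[0,T]$ vanishes, and the martingale functional central limit theorem (e.g.\ \cite[Theorem~7.1.4]{EK2009}) gives $N^{-1/2}M^N\dto\sqrt{2\lambda}\,W$.

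Finally I would pass to the limit by continuous mapping. The linear integral equation $x(t)=x(0)-\int_0^tx(s)\dif s+w(t)$ has the explicit unique solution $x(t)=\e^{-t}x(0)+\int_0^t\e^{-(t-s)}\dif w(s)$, which is a continuous functional of the pair $(x(0),w)\in\R\times D_\R[0,\infty)$ in the Skorohod topology. Because $(\bar Y^N_\infty(0),N^{-1/2}M^N)\dto(v,\sqrt{2\lambda}\,W)$ jointly, the continuous-mapping theorem delivers $\bar Y^N_\infty\dto X$, where $X$ solves $\dif X(t)=-X(t)\dif t+\sqrt{2\lambda}\dif W(t)$ with $X(0)=v$, as claimed; in particular tightness of $\bar Y^N_\infty$ need not be argued separately, as it is inherited through the continuous map.

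The main obstacle is the quadratic-variation step: this is where the $\sqrt N$-scale process must borrow the $N$-scale law of large numbers for $\frac{1}{N}\int_0^tY^N_\infty(s)\dif s$, so the argument genuinely needs fluid control of $Y^N_\infty$ as an external input. Once that is in hand, the exact linearity of the drift is what makes the remainder of the proof essentially mechanical.
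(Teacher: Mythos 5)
Your proof is correct, but note that the paper itself offers no proof of this statement: it is imported verbatim as \cite[Theorem 6.14]{Robert03} and used as a black box in the proofs of Theorem~\ref{th:diffusion} and Proposition~\ref{prop:negative}. Your argument is the standard martingale route to this classical result --- Poisson-driven sample-path representation, exact cancellation of the drift against the centering constant $\lambda(N)$, convergence of the predictable quadratic variation $N^{-1}\bigl(\lambda(N)t+\int_0^t Y^N_\infty(s)\,\dif s\bigr)\pto 2\lambda t$ via the fluid law of large numbers, the martingale FCLT, and the continuous-mapping theorem applied to the (explicitly solvable, hence continuous) linear integral map --- and each step is sound, including the observation that the assumed initial condition places $Y^N_\infty(0)/N$ at the fluid fixed point so that the quadratic-variation limit holds. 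Since there is no in-paper proof to compare against, the only remark worth making is that your argument matches the textbook treatment and would serve as a self-contained substitute for the citation.
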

The next two propositions state asymptotic properties of $\big\{D^N_+(t)\big\}_{t\geq 0}$ and $\big\{D_-^N(t)\big\}_{t\geq 0}$ mentioned before, which play a crucial role in the proof of Theorem~\ref{th:diffusion}. 
Let $B_{K+1}^N(t)$ be the cumulative number of tasks up to time $t$ that are assigned to some server pool having at least $K+1$ active tasks if $B>K+1$, and that are lost if $B=K+1$.
\begin{proposition}\label{prop:negative}
Under the assumptions of Theorem~\ref{th:diffusion}, for any $T\geq 0$, $B_{K+1}^N(T)\pto 0$, and consequently,
$\sup_{t\in[0,T]}D_-^N(t)\pto 0$ as $N\to\infty,$ 
provided $D_-^N(0)\pto 0$.
\end{proposition}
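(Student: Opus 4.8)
The plan is to pin down the precise event that triggers increments of $B_{K+1}^N$ and to show it is an $\Omega(N)$ large deviation of the total number of tasks, hence exponentially rare even when aggregated against the $O(N)$ arrival intensity over a finite horizon. First I would observe that, under the ordinary JSQ policy, an arriving task is routed to a server pool already holding at least $K+1$ active tasks (or lost, when $B=K+1$) exactly when every server pool carries at least $K+1$ tasks, i.e.\ when $Q_{K+1}^N(t-)=N$. Thus $B_{K+1}^N$ is a counting process with stochastic intensity $\lambda(N)\ind{Q_{K+1}^N(t-)=N}$, and taking compensators and using Fubini for the nonnegative integrand gives
\[
\E{B_{K+1}^N(T)} = \lambda(N)\int_0^T \Pro{Q_{K+1}^N(s-) = N}\,\dif s.
\]
Since $Q_{K+1}^N(s)=N$ forces all $N$ pools to hold at least $K+1$ tasks, the total $Y^N(s)=\sum_{i=1}^B Q_i^N(s)$ must satisfy $Y^N(s)\ge (K+1)N$, so $\Pro{Q_{K+1}^N(s-)=N}\le \Pro{Y^N(s-)\ge (K+1)N}$ and everything reduces to a tail estimate for $Y^N$.

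Next I would control $Y^N$ through the M/M/$\infty$ comparison noted earlier in the paper: absent overflow, the total number of tasks is an M/M/$\infty$ queue with arrival rate $\lambda(N)$ and unit per-task departure rate, independently of the assignment rule, while blocking can only decrease the count, so $Y^N(t)\le Y^N_\infty(t)$ under the natural coupling sharing the initial state and the arrival/departure clocks. Using the transient law $Y^N_\infty(t)\mid Y^N_\infty(0)\sim \mathrm{Bin}(Y^N_\infty(0),e^{-t})+\mathrm{Pois}(\lambda(N)(1-e^{-t}))$ together with $1-q+qe^\theta\le \exp(q(e^\theta-1))$ (writing $q=e^{-t}$), a Chernoff bound on the event $\{Y^N(0)\le c_0 N\}$, with any fixed $c_0\in(\lambda,K+1)$ and $N$ large enough that $\lambda(N)\le c_0 N$, yields for every $t\ge 0$
\[
\Pro{Y^N_\infty(t)\ge (K+1)N}\le \exp\!\big(-N[\theta(K+1)-c_0(e^\theta-1)]\big),
\]
since $q\,Y^N_\infty(0)+(1-q)\lambda(N)\le c_0 N$; optimizing at $\theta=\log((K+1)/c_0)>0$ makes the bracket a strictly positive constant $c>0$, uniformly in $t$. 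Because the initial conditions of Theorem~\ref{th:diffusion} together with $D_-^N(0)\pto 0$ force $Y^N(0)/N\pto \lambda$, and $f>0$ gives $\lambda<K+1$, the event $\mathcal{E}_N:=\{Y^N(0)\le c_0 N\}$ has probability tending to $1$; combining with a monotone coupling of $Y^N_\infty$ to an M/M/$\infty$ queue started from the deterministic level $\lceil c_0 N\rceil$ gives
\[
\E{B_{K+1}^N(T);\,\mathcal{E}_N}\le \lambda(N)\,T\,e^{-cN}\to 0.
\]
A split over $\mathcal{E}_N^c$ (probability $\to 0$) and Markov's inequality then yield $\Pro{B_{K+1}^N(T)>\varepsilon}\to 0$ for every $\varepsilon>0$, i.e.\ $B_{K+1}^N(T)\pto 0$.

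For the second assertion I would argue pathwise. When $B=K+1$ the sum defining $D_-^N$ is empty and $D_-^N\equiv 0$, so assume $B>K+1$. Every task counted by $B_{K+1}^N$ raises some pool from at least $K+1$ to at least $K+2$ tasks and hence increases $D_-^N$ by exactly one, whereas departures can only decrease $D_-^N$; therefore $D_-^N(t)\le D_-^N(0)+B_{K+1}^N(t)$ for all $t$. Since $B_{K+1}^N$ is nondecreasing, $\sup_{t\in[0,T]}D_-^N(t)\le D_-^N(0)+B_{K+1}^N(T)\pto 0$ by the first part and the hypothesis $D_-^N(0)\pto 0$, and nonnegativity of $D_-^N$ finishes the proof.

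The hard part will be making the tail estimate genuinely uniform in $t\in[0,T]$ while the initial state is random: one must decouple the random $Y^N(0)$ from the deviation bound through the event $\mathcal{E}_N$ and the monotone M/M/$\infty$ coupling, and verify that a single Chernoff exponent dominates the transient binomial/Poisson mixture for \emph{every} $t$, so that the $e^{-cN}$ decay overwhelms the $O(N)$ arrival intensity accumulated over the finite horizon.
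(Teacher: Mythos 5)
Your proof is correct, and it rests on the same structural observation as the paper's: under the ordinary JSQ policy an increment of $B_{K+1}^N$ forces $Q_{K+1}^N = N$, hence $Y^N \geq (K+1)N$, which is an order-$N$ upward deviation of a total that tracks an M/M/$\infty$ queue with mean $\approx \lambda N < (K+1)N$. Where you diverge is in how that rare event is ruled out. The paper couples $Y^N$ to $Y^N_\infty$ only up to the first overflow epoch, notes that the bad event forces $\sup_{t \leq T}(Y^N_\infty(t)-\lambda(N))/\sqrt{N} > \varepsilon\sqrt{N}+o(\sqrt{N})$, and then simply cites the stochastic boundedness of the diffusion-scaled M/M/$\infty$ process (Theorem~\ref{th:robert-book-mmn}) to conclude that $\Pro{B_{K+1}^N(T)\geq 1}\to 0$; no moment computation or explicit tail bound is needed. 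You instead compute the compensator of $B_{K+1}^N$, dominate $Y^N$ by $Y^N_\infty$ pathwise, and run a Chernoff bound on the explicit binomial-plus-Poisson transient law, uniformly in $t$, on the high-probability event $\{Y^N(0)\leq c_0 N\}$ with $\lambda < c_0 < K+1$. Your route is more self-contained and quantitatively stronger -- it shows $\E{B_{K+1}^N(T)}$ restricted to that event is $O(N e^{-cN})$, rather than merely $o(1)$ in probability -- at the cost of the explicit large-deviations computation and the care needed to decouple the random initial state via the monotone coupling; the paper's argument is softer and shorter because it recycles a diffusion-scale estimate already required elsewhere in the proof of Theorem~\ref{th:diffusion}. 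Your pathwise bound $D_-^N(t)\leq D_-^N(0)+B_{K+1}^N(t)$ for the second assertion matches the paper's reasoning.
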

Informally speaking, the above proposition implies that for large $N$, there will be almost no server pool with $K+2$ or more tasks in any finite time horizon, if the system starts with no server pools with more than $K+1$ tasks. 
The next proposition shows that the number of server pools having fewer than $K$ tasks is of order $\log (N)$ in any finite time horizon.

\begin{proposition}\label{prop:positive}
Under the assumptions of Theorem~\ref{th:diffusion}, the sequence $\left\{D^N_+(t)/\log (N)\right\}_{t\geq 0}$ is stochastically bounded in $D_{\mathbbm{R}}[0,\infty)$, provided $\left\{D^N_+(0)/\log (N)\right\}_{N\geq 1}$ is a tight sequence of random variables.
\end{proposition}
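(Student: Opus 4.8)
The plan is to show that $\{D^N_+(t)\}_{t\ge 0}$ is dominated pathwise by an $M/M/1$ queue with a \emph{strictly} negative drift of order $N$, and then to control the maximum of that queue over $[0,T]$; the negative drift is exactly where the hypothesis $f>0$ enters. Concretely, it suffices to establish that for every $T\ge 0$, $\lim_{L\to\infty}\limsup_{N\to\infty}\Pro{\sup_{t\in[0,T]}D^N_+(t)> L\log(N)}=0$, the tightness of $D^N_+(0)/\log(N)$ being assumed.

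First I would read off the jump dynamics of $D^N_+(t)=\sum_{i=1}^K(N-Q^N_i(t))$ from the $\QQ^N$-dynamics. A departure from a server pool holding exactly $j\le K$ tasks raises $N-Q^N_j$ by one and hence raises $D^N_+$ by one, while departures from pools with $j\ge K+1$ tasks leave $D^N_+$ unchanged; since the rate of departures from pools with exactly $j$ tasks is $j(Q^N_j-Q^N_{j+1})$, the instantaneous upward rate of $D^N_+$ is
\[
R^N(t)=\sum_{j=1}^K j\big(Q^N_j(t)-Q^N_{j+1}(t)\big)=\sum_{j=1}^K Q^N_j(t)-K\,Q^N_{K+1}(t)\le KN .
\]
On the other hand, whenever $D^N_+(t)>0$ the minimum occupancy is below $K$, so under JSQ every arrival is routed to a pool with fewer than $K$ tasks and lowers $D^N_+$ by exactly one; thus the downward rate equals $\lambda(N)\ind{D^N_+(t)>0}$. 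Because $f>0$ gives $\lambda(N)=(K+f)N+\op(N)>KN$ for large $N$, the process has an order-$N$ negative drift whenever it is positive.

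Next I would build a coupling on a common probability space dominating $D^N_+$ by an $M/M/1$ queue $\tilde D^N$ with constant birth rate $KN$ and death rate $\lambda(N)$. Drive both deaths by the common arrival clock (rate $\lambda(N)$), decrementing each process when it is positive; drive the births of $D^N_+$ by the sub-$K$ departure epochs (intensity $R^N$) and superimpose an independent slack birth stream of intensity $KN-R^N(t)\ge 0$, so that $\tilde D^N$ has constant birth intensity $KN$ while every birth of $D^N_+$ is also a birth of $\tilde D^N$. A one-line induction over event times then gives $D^N_+(t)\le\tilde D^N(t)$ pathwise once $\tilde D^N(0)=D^N_+(0)$, and marginally $\tilde D^N$ is an $M/M/1$ queue with load $\rho_N=KN/\lambda(N)\to K/(K+f)<1$. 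Hence $\sup_{[0,T]}D^N_+\le\sup_{[0,T]}\tilde D^N$. To finish I would bound $\sup_{t\in[0,T]}\tilde D^N(t)$ through its embedded jump chain $\{\hat S_k\}$, a walk reflected at $0$ that steps down with probability $q_N=\lambda(N)/(KN+\lambda(N))>1/2$. The number of jumps in $[0,T]$ is Poisson with mean of order $N$, hence at most $M:=cNT$ with probability $\to 1$; a gambler's-ruin estimate bounds the chance that a single excursion from level~$0$ reaches height $L$ by $\rho_N^{\,L-1}$, and a union bound over the at most $M$ excursions gives
\[
\Pro{\sup_{t\in[0,T]}\tilde D^N(t)\ge L}\le \Pro{\tilde D^N(0)\ge L}+\rho_N^{\,L-\tilde D^N(0)}+M\,\rho_N^{\,L-1}.
\]
Taking $L=c_1\log(N)$ with $c_1$ large (relative to $1/|\log\rho|$ and to the tightness constant of $D^N_+(0)/\log N$) makes $M\rho_N^{L-1}=O(N^{1+c_1\log\rho})\to 0$, while the other two terms vanish by the assumed tightness of $D^N_+(0)/\log(N)$.

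The main obstacle is this last step: a purely marginal (stationary) estimate only yields $D^N_+(t)=\Op(1)$ at a fixed time, whereas the proposition demands a uniform bound over $[0,T]$, during which the queue makes order-$N$ transitions. Beating the resulting order-$N$ union bound with the geometric excursion tails $\rho_N^{L}$ is precisely what forces the $\log(N)$ normalization, and what separates the $f>0$ case (order-$N$ drift, fluctuations $\Op(\log N)$) from the drift-free $f=0$ case, where the fluctuations are genuinely of order $\sqrt{N}$. The remaining care—making the intensity-level coupling rigorous and absorbing the non-stationary start—is routine given the tightness hypothesis on $D^N_+(0)/\log(N)$.
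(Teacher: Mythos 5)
Your proposal is correct and follows essentially the same route as the paper's proof: identify the birth--death dynamics of $D^N_+$ with up-rate $\sum_{i=1}^K(Q_i-Q_{K+1})\le KN<\lambda(N)$ and down-rate $\lambda(N)$, dominate it by a negative-drift reflected walk (the paper does this directly on the uniformized jump chain with up-probability $K/(K+\lambda)$, you via an intermediate M/M/1 queue with rates $KN$ and $\lambda(N)$ — the same object after embedding), bound the number of jumps on $[0,T]$ by $O(N)$ via the Poisson clock, and beat the union bound over $O(N)$ excursions with the geometric gambler's-ruin tail by taking the height proportional to $\log(N)$. This is precisely the paper's Lemma~\ref{lem:discr-walk} and its use in the proof of Proposition~\ref{prop:positive}, so no further comparison is needed.
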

Before providing the proofs of the above two propositions, we first prove Theorem~\ref{th:diffusion} using Propositions~\ref{prop:negative} and~\ref{prop:positive}.
\begin{proof}[Proof of Theorem~\ref{th:diffusion}]
First observe that (iv) and (ii) immediately follows from Propositions~\ref{prop:negative} and~\ref{prop:positive}, respectively.

To prove (i), fix any $T\geq 0$.
We will show that
\begin{equation}
\lim_{N\to\infty}\Pro{\sup_{t\in[0,T]}\sum_{i=1}^{K-1}\bQ_{i}^N(t)\leq 1}=1.
\end{equation} 
 Since $\bQ^N_{i}\leq 1$ implies that $\bQ^N_{i-1}\leq 1$ for $i= 2,\ldots,K$, this then completes the proof of~(i).
Note that the process $\sum_{i=1}^{K-1}\bQ_{i}^N(\cdot)$ increases by one when there is a departure from some server pool with at most $K-1$ active tasks, and if positive, decreases by one whenever there is an arrival.
Therefore it can be thought of as a
birth-death process with state-dependent instantaneous birth rate $\sum_{i=1}^{K-1}i(Q^N_i(t)-Q^N_{i+1}(t))$, and constant instantaneous death rate $\lambda (N)$. 
Since
\begin{align*}
\sum_{i=1}^{K-1}i(Q^N_i(t)-Q^N_{i+1}(t))&=\sum_{i=1}^{K-1}Q^N_i(t)-(K-1)Q^N_{K}(t)\leq (K-1)(N-Q_K^N(t)),
\end{align*}
the process $\big\{\sum_{i=1}^{K-1}\bQ_{i}^N(t)\big\}_{t\geq 0}$ 
is stochastically upper bounded by a birth-and-death process $\big\{Z^N(t)\big\}_{t\geq 0}$ with birth rate $(K-1)(N-Q_{K}^N(t))$ and constant death rate $\lambda( N).$
Due to (ii), we can claim that for \textit{any} nonnegative sequence $\ell(N)$ diverging to infinity,
$$\lim_{N\to\infty}\Pro{\sup_{t\in[0,T]}(N-Q^N_K(t))\leq \ell (N)\log (N)}=1.$$
Let $\big\{\eta^N(n)\big\}_{n\geq 1}$ denote the discrete uniformized chain of the upper bounding birth-death process. 
Also, let $K_N(t)$ denote the number of jumps taken up to time $t$ by $\big\{\eta^N(n)\big\}_{n\geq 1}$. 
Since the jump rate of the process is $O(N)$, we have for \textit{any} nonnegative sequence $\ell^0(N)$ diverging to infinity, and for any $T\geq 0$, 
$$\lim_{N\to\infty}\Pro{K_N(T)\leq N\ell^0(N)}=1.$$
Given $Q_K^N$, considering the $\eta^N(\cdot)$ Markov chain, the probability of one birth is bounded from above by 
$$p_{Q_K^N}=\frac{(K-1)(N-Q_K^N)}{N+(K-1)(N-Q^N_{K})}.$$
Now, $Z^N(\cdot)$ will exceed 1 if and only if there are at least two successive births. Hence,
\begin{equation}\label{eq:split}
\begin{split}
&\Pro{\sup_{t\in[0,T]}Z^N(t)\leq 1}=\Pro{\sup_{n\leq K_N(T)}\eta^N(n)\leq 1}\\
&\geq \Pro{\sup_{n\leq N\ell^0(N)}\eta^N(n)\leq 1}\Pro{K_N(T)\leq N\ell^0(N)}+\Pro{K_N(T)> N\ell^0(N)}.
\end{split}
\end{equation}
Again we can write the first term of the last inequality above as
\begin{align*}
&\Pro{\sup_{n\leq N\ell^0(N)}\eta^N(n)\leq 1}\\
&\geq \Pro{\sup_{n\leq N\ell^0(N)}\eta^N(n)\leq 1\given \sup_{t\in[0,T]}(N-Q^N_K(t))\leq  \ell(N)\log (N)}\\
&\hspace{6.7cm}\times\Pro{\sup_{t\in[0,T]}(N-Q^N_K(t))\leq \ell(N)\log (N)}\\
&\geq \left(1-\left(\frac{(K-1)\ell(N)\log (N)}{N+(K-1)\ell(N)\log (N)}\right)^2\right)^{N\ell^0(N)}\times\Pro{\sup_{t\in[0,T]}(N-Q^N_K(t))\leq \ell(N)\log (N) }.
\end{align*}
If we choose $\ell(N)$ and $\ell^0(N)$ such that $\ell(N)^2\ell^0(N)\log (N)/ N\to 0$ as $N\to\infty$, then the expression on the right of~\eqref{eq:split} converges to 1 (one can see that this choice is always feasible).
 Hence the proof of (i) is complete.

For (iii), recall that $Y^N_\infty(t)$ denotes the total number of tasks in an M/M/$\infty$ system with arrival rate $\lambda(N)$ and exponential service time distribution with unit mean.
Also, Proposition~\ref{prop:negative} implies that under the assumptions of the theorem, in any finite time horizon, with high probability there will be no arrival to a server pool with $K+1$ or more active tasks.
Now observe that since $B\geq K+1$, for any $T\geq 0$,
\begin{align*}
\Pro{\exists\ t\in[0,T]: Y^N(t) \neq Y^N_\infty(t)}
\leq \Pro{\exists\ t\in[0,T]: B^N_{K+1}(t)\geq 1}\to 0,\quad\text{as}\quad N\to\infty.
\end{align*}
Propositions~\ref{prop:negative} and~\ref{prop:positive} then yield
\begin{align*}
&\sup_{t\in[0,T]}\frac{1}{\sqrt{N}}\left|Q_{K+1}^N(t)-f(N)-(Y^N_\infty(t)-\lambda (N))\right|\\
= & \sup_{t\in[0,T]}\frac{1}{\sqrt{N}}\left|\sum_{i=1}^B Q_{i}^N(t)+\sum_{i=1}^K(N- Q_{i}^N(t))-\sum_{i=K+2}^BQ^N_i(t)-KN-f(N)-(Y^N_\infty(t)-\lambda (N))\right| \\
= & \sup_{t\in[0,T]}\frac{1}{\sqrt{N}}\left[Y^N(t)-Y^N_\infty(t)
+D_N^+(t)-D^N_-(t)\right]\to 0,
\end{align*}
as $N\to\infty$,
which in conjunction with \cite[Theorem 6.14]{Robert03}, as mentioned earlier, gives the desired diffusion limit.
\end{proof}

\begin{proof}[Proof of Proposition~\ref{prop:negative}]
Couple the M/M/$\infty$ system and a system under the ordinary JSQ policy in the natural way, until an overflow event occurs in the latter system.
Observe that for any fixed $M>0$, the event $\left[\sup_{t\in[0,T]}B^N_{K+1}(t)\geq M\right]$ will occur only if for some $t'\leq T$,
some arriving task is assigned to a server pool with more than $K$ active tasks,
and in that case, there exists $t''\leq t'$, such that $Y^N(t'')> (\lambda+\varepsilon)N$, for some $\varepsilon>0$ with $\lambda+\varepsilon<1$.
Since, for any $t\in[0,t'']$, $Y^N(t) = Y^N_\infty(t)$, 
we have
\begin{equation}\label{eq:neg_part}
\begin{split}
&\sup_{t\in[0,T]}B^N_{K+1}(t)\geq M\\
&\implies\sup_{t''\in[0,t']}Y^N(t'')\geq  (\lambda+\varepsilon)N\\
&\implies\sup_{t''\in[0,t']}Y^N_\infty(t'')\geq  (\lambda+\varepsilon)N\\
&\implies\sup_{t\in[0,T]}(Y^N_\infty(t)-\lambda( N))> \varepsilon N +o(N)\\
&\implies\sup_{t\in[0,T]}\frac{1}{\sqrt{N}}(Y^N_\infty(t)-\lambda (N))> \varepsilon\sqrt{N}+o(\sqrt{N}).
\end{split}
\end{equation}
From Theorem 6.14 of \cite{Robert03}, we know that the process $\big\{(Y^N(t)-\lambda (N))/\sqrt{N}\big\}_{t\geq 0}$ is stochastically bounded. 
Hence, Equation~\eqref{eq:neg_part} yields that for any $T\geq 0$, $\sup_{t\in[0,T]}B^N_{K+1}(t)$ converges to zero in probability as $N\to\infty$.
Consequently, from the assumption of Theorem~\ref{th:diffusion} that $D^N_-(0)\pto 0$, the conclusion $\sup_{t\in[0,T]}D^N_-(t)\pto 0$, is immediate.
\end{proof}

\begin{proof}[Proof of Proposition~\ref{prop:positive}]
Observe that $\sum_{i=1}^K(N-Q_i^N(\cdot))$ increases by one when there is a departure from some server pool with at most $K$ active tasks, and if positive, decreases by one whenever there is an arrival.
Therefore the process $\big\{D^N_+(t)\big\}_{t\geq 0}$ increases by one at rate 
$\sum_{i=1}^K i(Q_i(t)-Q_{i+1}(t))=\sum_{i=1}^K (Q_i(t)-Q_{K+1}(t))$, and while positive, decreases by one at  constant rate $\lambda(N)$. 
Now, to prove stochastic boundedness of the sequence of processes $\big\{D^N_+(t)/\log (N)\big\}_{t\geq 0}$, we will show that for any fixed $T\geq 0$ and any function $\ell(N)$ diverging to infinity (i.e., such that $\ell(N)\to\infty$ as $N\to\infty$), 
\begin{equation}\label{eq:conv_prob}
\Pro{\sup_{t\in[0,T]}D^N_+(t)>\ell(N)\log (N)}\to 0.
\end{equation}

Let $\big\{X^N(n)\big\}_{n\geq 0}$ be the discrete jump chain, and $K_N(t)$ be the number of jumps before time $t$, of the process $\big\{D_+^N(t)\big\}_{t\geq 0}$. Hence, for any fixed $T\geq 0$,
\begin{equation}\label{eq:condition}
\begin{split}
&\Pro{\sup_{t\in[0,T]}D^N_+(t)>\ell(N)\log (N)}\\
&=\Pro{\sup_{n\leq K_N(T)}X^N(n)>\ell(N)\log (N)}\\
&\leq \Pro{\sup_{n\leq N\ell_0(N)}X^N(n)>\ell(N)\log (N)}\Pro{K_N(T)\leq N\ell_0(N)}\\
&+\Pro{K_N(T)> N\ell_0(N)},
\end{split}
\end{equation}
for some function $\ell_0(N):\mathbbm{N}\to\mathbbm{N}$, to be chosen according to Lemma~\ref{lem:discr-walk} below. 
Now, observe that $K_N(T)$ is upper bounded by a Poisson random variable with parameter $\lambda(N)T+\int_0^T \sum_{i=1}^K (Q_i(s)-Q_{K+1}(s))ds$, and $\sum_{i=1}^K (Q_i(s)-Q_{K+1}(s))\leq KN$. 
Hence for any function $\ell_0(N)$ diverging to infinity, we have 
$$\Pro{K_N(T)> N\ell_0(N)}\to 0.$$

To control the first term, it is enough to note that $\sum_{i=1}^K (Q_i(t)-Q_{K+1}(t))\leq KN<\lambda N$. Hence the process $\big\{X^N(n)\big\}_{n\geq 1}$ can be stochastically upper bounded by the process $\big\{\hat{X}^N(n)\big\}_{n\geq 1}$, defined as follows:
\begin{equation}\label{eq:upperboundingbd}
\hat{X}^N(n+1)=
\begin{cases}
\hat{X}^N(n)+1&\mbox{ with prob. }K/(K+\lambda)\\
(\hat{X}^N(n)-1)\vee 0&\mbox{ with prob. }\lambda/(K+\lambda)
\end{cases}
\end{equation}
Therefore combining Lemma~\ref{lem:discr-walk} below for the above Markov process $\big\{\hat{X}^N(n)\big\}_{n\geq 0}$ with Equation~\eqref{eq:condition} we obtain Equation~\eqref{eq:conv_prob}. Hence the proof is complete.
\end{proof}
\begin{lemma}\label{lem:discr-walk}
For any function $\ell(N):\mathbbm{N}\to\mathbbm{N}$, diverging to infinity, there exists another function $\ell_0(N):\mathbbm{N}\to\mathbbm{N}$, diverging to infinity, such that 
$$\Pro{\sup_{n\leq N\ell_0(N)}\hat{X}^N(n)>\ell(N)\log (N)}\to 0.$$
\end{lemma}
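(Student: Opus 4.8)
The plan is to exploit the fact that, by~\eqref{eq:upperboundingbd}, $\hat{X}^N$ is a random walk on $\{0,1,2,\dots\}$ reflected at the origin, with upward step probability $p=K/(K+\lambda)$ and downward step probability $q=\lambda/(K+\lambda)$. Since $f>0$ forces $K<\lambda$, this walk has \emph{negative} drift, and the ruin ratio $r:=p/q=K/\lambda$ is strictly less than $1$. This is precisely the regime in which the running maximum over $M$ steps grows only like $\log M$; the whole content of the lemma is to make this quantitative, so that the threshold $\ell(N)\log(N)$, which diverges faster than $\log N$, is exceeded with vanishing probability even over $M=N\ell_0(N)$ steps.

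First I would record a single-excursion estimate via the classical gambler's ruin identity: starting from state $1$, the probability that $\hat{X}^N$ reaches a level $L\ge 1$ before returning to $0$ equals $((q/p)-1)/((q/p)^{L}-1)$, which for $L$ large is at most $C\,r^{L}$ with $C:=2(\lambda/K-1)$. By the strong Markov property the successive excursions away from the origin are i.i.d., each reaching level $L$ with probability at most $C\,r^L$, and within any window of $M$ steps at most $M$ excursions can be initiated (each initiation is a distinct step $0\to 1$). Hence, applying Markov's inequality to the number of excursions that reach $L$,
\begin{equation*}
\Pro{\sup_{n\le M}\hat{X}^N(n)\ge L}\le C\, M\, r^{L}.
\end{equation*}
If $\hat{X}^N(0)\neq 0$, the initial partial excursion starts from $\hat{X}^N(0)$, which in the application of Proposition~\ref{prop:positive} is $\Op(\log N)$; since it must then climb a further $L-\hat{X}^N(0)$, still diverging faster than $\log N$, this contributes a term of the same negligible order that I would absorb into the same bound.

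Next I would substitute $M=N\ell_0(N)$ and $L:=\lceil \ell(N)\log(N)\rceil$, noting $\{\sup>\ell(N)\log N\}\subseteq\{\sup\ge L\}$ since $\hat X^N$ is integer-valued. Writing $\gamma:=\log(\lambda/K)>0$ we have $r^{L}\le \mathrm{e}^{-\gamma\,\ell(N)\log N}=N^{-\gamma\ell(N)}$, so the bound becomes $C\,\ell_0(N)\,N^{\,1-\gamma\ell(N)}$. Because $\ell(N)\to\infty$, eventually $\gamma\ell(N)>2$ and the exponent $1-\gamma\ell(N)\to-\infty$, so the choice $\ell_0(N):=\lceil\log N\rceil$ (indeed any slowly diverging integer sequence) gives both $\ell_0(N)\to\infty$ and $C\,\ell_0(N)\,N^{\,1-\gamma\ell(N)}\le C(\log N)N^{-1}\to 0$ for all large $N$, which is exactly the assertion. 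This $\ell_0(N)$ diverges, so it is also admissible in the estimate $\Pro{K_N(T)>N\ell_0(N)}\to 0$ invoked in Proposition~\ref{prop:positive}.

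The substantive step is the excursion estimate together with the union bound over excursions: one must justify that over $M$ steps only $\mathrm{O}(M)$ i.i.d. attempts to reach level $L$ occur and that each succeeds with probability at most $C\,r^{L}$, which is where the strong Markov renewal structure at $0$ and the negative drift $r<1$ enter. Everything afterwards is the elementary estimate of $N\ell_0(N)\,r^{\ell(N)\log N}$. The only mild technical care concerns the random initial state $\hat{X}^N(0)$; and it is worth emphasizing that $f>0$ (hence $K<\lambda$, hence $r<1$) is indispensable — for integral $\lambda$ the walk would be critical and this logarithmic control would break down, consistent with the qualitatively different behaviour recorded for the $f=0$ case.
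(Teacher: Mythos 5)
Your proof is correct and takes essentially the same route as the paper's: both decompose the trajectory of $\hat{X}^N$ into i.i.d.\ excursions from the origin, bound the maximum of a single excursion via the gambler's-ruin formula (the paper cites Feller for exactly the tail $p\,\frac{q/p-1}{(q/p)^{M}-1}\leq a^{-M}$ that you derive), and finish with a union bound over the at most $N\ell_0(N)$ excursions, choosing $\ell_0$ to diverge slowly enough. The only cosmetic differences are that the paper writes the final estimate as $1-\bigl(1-a^{-\ell(N)\log N}\bigr)^{N\ell_0(N)}$ rather than $C\,N\ell_0(N)\,r^{L}$ and leaves the admissible $\ell_0(N)$ implicit, whereas you exhibit $\ell_0(N)=\lceil\log N\rceil$ and additionally flag the (minor) issue of a nonzero initial state.
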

\begin{proof}
We will use a regenerative approach to prove the lemma. Let $p:=K/(K+\lambda)$. 
Note that then $p<q:=1-p$. 
Define the $i^{\mathrm{th}}$ regeneration time $\rho_i$ of the Markov chain as follows: $\rho_0=0$, and $\rho_i:=\min\big\{k>\rho_{i-1}:\hat{X}_k=0\big\}$, for $i\geq 1$. 
Also define, $m_i:=\max\big\{\hat{X}_k: \rho_{i-1}\leq k<\rho_i\big\}$, for $i\geq 1$, and $\xi(n):=\min\big\{i:\rho_i\geq n\big\}$, for $n\geq 1$. 
Now observe that \cite[XIV.2]{feller1}, 
\begin{equation}
\Pro{m_i\geq M}=p\times\frac{\frac{q}{p}-1}{\left(\frac{q}{p}\right)^M-1}\leq a^{-M},
\end{equation}
for some $a>1$, since $q/p>1$. 
Thus the tail of the distribution of the maximum attained in one regeneration period decays exponentially. Recall that, in $n$ steps the Markov chain exhibits $\xi(n)$ regenerations. Hence, for any $\ell_0(N)$ and $\ell(N)$,
\begin{equation}\label{eq:discrete-maximum}
\begin{split}
&\Pro{\sup_{n\leq N\ell_0(N)}\hat{X}^N(n)>\ell(N)\log (N)}=\Pro{\sup_{i\leq \xi(N\ell_0(N))}m_i>\ell(N)\log (N)}\\
&\leq 1-\left(1-a^{-\ell(N)\log (N)}\right)^{\xi(N\ell_0(N))}\leq 1-\left(1-a^{-\ell(N)\log (N)}\right)^{N\ell_0(N)}.
\end{split}
\end{equation}

Now, for given $\ell(N)$, choose $\ell_0(N)$ diverging to infinity, such that 
$$N\ell_0(N) a^{-\ell(N)\log (N)}\to 0\quad \text{as}\quad N\to\infty.$$ 
Since the condition is equivalent to 
$$\log (N)+\log(\ell_0(N))-\ell(N)\log( a)\log (N)\to-\infty,$$ 
it is evident that such a choice of $\ell_0(N)$ is always possible. Hence, for such a choice of $\ell_0(N)$ the probability in Equation~\eqref{eq:discrete-maximum} converges to zero and the proof is complete.
\end{proof}

\section{Diffusion Limit of JSQ: Integral \texorpdfstring{$\boldsymbol{\lambda}$}{lambda}}\label{sec:integral}
In this section we analyze the diffusion-scale behavior of the ordinary JSQ policy when
$\lambda$ is an integer, i.e., $f=0$, and
$$\frac{KN-\lambda(N)}{\sqrt{N}}\to\beta,\quad \text{as}\quad N\to\infty,$$ 
with $\beta\in\R$ being a fixed real number. 
Throughout this section we assume $B=K+1.$
Thus, tasks that arrive when all the server pools have $K+1$ active tasks, are permanently discarded. 
For brevity in notation, define, $Z^N_1(t)=\sum_{i=1}^K (N-Q_i^N(t))$ and $Z^N_2(t):=Q_{K+1}^N(t)$. 
Note that $Z^N_1(t)$ corresponds to $D^N_+(t)$ in the previous section.
Also recall \eqref{eq:scaling-f=0}, and define
\begin{equation}
\begin{split}
\zeta_1^N(t)&:=\frac{Z_1^N(t)}{\sqrt{N}}=\hQ_{K-1}^N(t)+\hQ_{K}^N(t)\\
\zeta_2^N(t)&:=\frac{Z_2^N(t)}{\sqrt{N}}=\hQ_{K+1}^N(t),
\end{split}
\end{equation}
with $\hQ_{K-1}^N(t)$, $\hQ_{K}^N(t)$, and $\hQ_{K+1}^N(t)$ as in~\eqref{eq:scaling-f=0}.

\begin{theorem}\label{th: f=0 diffusion}
Assume that $(\zeta^N_1(0),\zeta^N_2(0))\dto (\zeta_1(0),\zeta_2(0))$ in $\R^2$ as $N\to\infty$. 
Then the two-dimensional process $\big\{(\zeta^N_1(t),\zeta^N_2(t))\big\}_{t\geq 0}$ converges weakly to the process $\big\{(\zeta_1(t),\zeta_2(t))\big\}_{t\geq 0}$ in $D_{\R^2}[0,\infty)$ governed by the following stochastic recursion equation:
\begin{align*}
\zeta_1(t)&=\zeta_1(0)+\sqrt{2K}W(t)-\int_0^t(\zeta_1(s)+K\zeta_2(s))\dif s+\beta t+V_1(t)\\
\zeta_2(t)&=\zeta_2(0)+V_1(t)-(K+1)\int_0^t\zeta_2(s)\dif s,
\end{align*}
where $W$ is the standard Brownian motion, and $V_1(t)$ is the unique non-decreasing process in $D_{\R_+}[0,\infty)$ satisfying
$$\int_0^t\ind{\zeta_1(s)\geq 0}\dif V_1(s)=0.$$
\end{theorem}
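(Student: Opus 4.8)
The plan is to set up a two-dimensional semimartingale decomposition for the scaled pair $(\zeta_1^N,\zeta_2^N)$, identify its drift, diffusion and reflection terms, prove tightness, and characterize the weak limit as the unique solution of the stated coupled reflected stochastic differential equation. Regard $Z_1^N(t)=\sum_{i=1}^K(N-Q_i^N(t))$ and $Z_2^N(t)=Q_{K+1}^N(t)$ as a Markov jump process on $\Z_+^2$. Since $B=K+1$, inspecting the JSQ dynamics shows that $Z_1^N$ jumps up by one at every departure from a server pool with at most $K$ active tasks --- which happens at rate $R_1^N:=\sum_{i=1}^K i(Q_i^N-Q_{i+1}^N)=KN-Z_1^N-KZ_2^N$ --- and jumps down by one at every arrival while $Z_1^N>0$, i.e.\ at rate $\lambda(N)\ind{Z_1^N>0}$; meanwhile $Z_2^N$ jumps up by one whenever an arrival occurs while $Z_1^N=0$ and $Z_2^N<N$, and down by one at rate $(K+1)Z_2^N$. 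I would first note that overflow is negligible: once $\{\zeta_2^N\}$ is shown stochastically bounded, $Z_2^N=\Op(\sqrt N)\ll N$, so by an argument analogous to Proposition~\ref{prop:negative} the event $\{Z_2^N=N\}$ does not occur on any finite horizon with high probability, and the up-jumps of $Z_2^N$ may be identified with arrivals into $\{Z_1^N=0\}$.

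Compensating the driving Poisson clocks and dividing by $\sqrt N$, I would obtain
\begin{equation*}
\begin{split}
\zeta_1^N(t)&=\zeta_1^N(0)+\tilde M_1^N(t)+\frac{KN-\lambda(N)}{\sqrt N}\,t-\int_0^t\big(\zeta_1^N(s)+K\zeta_2^N(s)\big)\dif s+V_1^N(t),\\
\zeta_2^N(t)&=\zeta_2^N(0)+\tilde M_2^N(t)+V_1^N(t)-(K+1)\int_0^t\zeta_2^N(s)\dif s,
\end{split}
\end{equation*}
where $V_1^N(t):=\frac{\lambda(N)}{\sqrt N}\int_0^t\ind{Z_1^N(s)=0}\dif s$ is the continuous, non-decreasing compensator of the redirected arrivals, and $\tilde M_1^N,\tilde M_2^N$ are the scaled compensated martingales. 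By construction $\zeta_1^N\ge 0$ and $V_1^N$ increases only on the boundary $\{\zeta_1^N=0\}$, so $(\zeta_1^N,V_1^N)$ solves the one-sided Skorokhod problem for the input $\zeta_1^N(0)+X^N$, with $X^N$ denoting everything in the first line except $V_1^N$; hence $\zeta_1^N=\Phi(\zeta_1^N(0)+X^N)$ and $V_1^N=\Psi(\zeta_1^N(0)+X^N)$ for the Lipschitz reflection maps $\Phi,\Psi$. For the martingales I would compute the predictable quadratic variations: using the linear growth of the rates and the fact that the time spent in $\{Z_1^N=0\}$ contributes only $\Op(\sqrt N)$ to the relevant event counts (since $\sqrt N\,V_1^N$ is stochastically bounded), I would show $\langle\tilde M_1^N\rangle(t)\to 2Kt$, whereas both the arrival and the departure martingales feeding $\zeta_2^N$ involve only $\Op(\sqrt N)$ events, so $\langle\tilde M_2^N\rangle(t)\to 0$. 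This is exactly why $\zeta_2$ carries no Brownian term.

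Next I would establish relative compactness through the criteria of Theorem~\ref{th:from EK}, as in the proof of Proposition~\ref{prop:rel compactness}. The key preliminary is an a priori $\sqrt N$-scale stochastic boundedness of $(\zeta_1^N,\zeta_2^N)$ on $[0,T]$: the net drift of $Z_1^N$ while positive equals $R_1^N-\lambda(N)=\beta\sqrt N+o(\sqrt N)-Z_1^N-KZ_2^N$, which is negative once $Z_1^N+KZ_2^N$ exceeds order $\sqrt N$, while $Z_2^N$ has the mean-reverting drift $-(K+1)Z_2^N$; a Lyapunov/comparison argument in the spirit of Proposition~\ref{prop:positive} then bounds both coordinates. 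As a consistency check, $\zeta_2^N-\zeta_1^N=(Y^N-KN)/\sqrt N$ with $Y^N=\sum_{i=1}^{K+1}Q_i^N$, and in the absence of overflow $Y^N$ is the occupancy of an M/M/$\infty$ system, so Theorem~\ref{th:robert-book-mmn} already pins this linear combination to an Ornstein-Uhlenbeck process and isolates the reflection as the only remaining ingredient. Given stochastic boundedness the drift integrals are controlled, the martingale parts are tight by the quadratic-variation estimates, so $X^N$ is tight; continuity of $\Phi,\Psi$ yields tightness of $(\zeta_1^N,V_1^N)$, and the second equation gives tightness of $\zeta_2^N$.

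Finally I would characterize the limit and prove uniqueness. Along any convergent subsequence, the martingale functional central limit theorem gives $\tilde M_1^N\dto\sqrt{2K}\,W$ for a standard Brownian motion $W$ and $\tilde M_2^N\dto 0$; the integral terms pass to the limit by the continuous-mapping theorem, and the reflection structure survives because $V_1^N$ is non-decreasing with increase supported on $\{\zeta_1^N=0\}$, which forces $\int_0^t\ind{\zeta_1(s)>0}\dif V_1(s)=0$ in the limit. Thus every subsequential limit solves the stated system. I expect the main obstacle to be the coupled reflection: the single pushing process $V_1$ simultaneously reflects $\zeta_1$ at the origin and drives $\zeta_2$, so the limit is not a plain one-dimensional Skorokhod problem. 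To close the argument I would prove pathwise uniqueness of the coupled reflected SDE --- substituting the explicitly $V_1$-driven linear equation for $\zeta_2$ into the drift of $\zeta_1$ and invoking the global Lipschitz property of the Skorokhod map together with Gronwall's inequality to obtain a contraction on a short interval, then iterating over $[0,T]$. Uniqueness upgrades subsequential convergence to convergence of the full sequence, completing the proof.
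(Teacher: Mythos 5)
Your proposal is correct and follows essentially the same route as the paper: write $(Z_1^N,Z_2^N)$ as a stochastic recursion with a single pushing term at $\{Z_1^N=0\}$ that simultaneously reflects $\zeta_1^N$ and drives $\zeta_2^N$, compensate to get martingales whose predictable quadratic variations converge to $2Kt$ and $0$ (using that $Z_1^N/N,Z_2^N/N\pto 0$), apply the martingale FCLT, and pass to the limit via continuity and uniqueness of the coupled reflection map (Lemma~\ref{lem:martingale convergence} and Proposition~\ref{th:continuous}). The only substantive differences are cosmetic: you use the compensator form of $V_1^N$ rather than the scaled overflow count, you add an a priori tightness step that the paper renders unnecessary by invoking the continuity of the solution map, and you sketch the Gronwall/Lipschitz argument for the coupled Skorokhod problem that the paper imports from \cite{EG15}.
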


\begin{remark}{\normalfont
Note that $Y^N(t)-KN = Z_2^N(t) - Z_1^N(t)$. Thus, under the assumption in~\eqref{eq:f=0}, the diffusion limit in Theorem~\ref{th: f=0 diffusion} implies that 
\begin{align*}
\frac{Y^N(\cdot)-\lambda(N)}{\sqrt{N}} = \frac{Y^N(\cdot)-KN}{\sqrt{N}} + \frac{KN-\lambda(N)}{\sqrt{N}}\dto \zeta_2(\cdot)-\zeta_1(\cdot) + \beta .
\end{align*}
Writing $X(t) = \zeta_2(t) -\zeta_1(t) -\beta$, from Theorem~\ref{th: f=0 diffusion}, one can note that the process $\big\{X(t)\big\}_{t\geq 0}$ satisfies 
$$\dif X(t) = -X(t)\dif t - \sqrt{2K}\dif W(t),$$
which is consistent with the diffusion-level behavior of $Y^N(\cdot)$ stated in Theorem~\ref{th:robert-book-mmn}. 
}
\end{remark}
Next, using the arguments in the proof of Proposition~\ref{prop:positive} one can see that the process
$$\sum_{i=1}^{K-1}\frac{N-Q_i^N(\cdot)}{\sqrt{N}}=\hQ^N_{K-1}(\cdot)\pto 0,$$
provided  $\hQ_{K-1}^N(0)\pto 0$.
Thus, Theorem~\ref{th: f=0 diffusion} yields the diffusion limit for the ordinary JSQ policy in the case $B=K+1$.
The proof for $B>K+1$ then follows from exactly the same arguments as provided in~\cite[Section~5.2]{EG15}.
The idea is that since the process $Q_{K+1}^N(\cdot)$, when scaled by $\sqrt{N}$, is stochastically bounded, the probability that on any finite time interval, it will take value $N$ (or equivalently, all server pools will have at least $K+1$ active tasks) vanishes as $N$ grows large.
Therefore, the dynamics of the limit of $(\hQ_{K+2}^N(\cdot),\ldots,\hQ_M(\cdot))$ becomes deterministic, and the limit of $\hQ^N_{K+1}(\cdot)$ for $B>K+1$ becomes a transformation of the limit of $\hQ^N_{K+1}(\cdot)$ for $B=K+1$, as described in Theorem~\ref{th:diff pwr of d 2}. 
Hence, note that the diffusion limit in Theorem~\ref{th: f=0 diffusion} is equivalent to the one in Theorem~\ref{th:diff pwr of d 2}.
In view of the universality result in Corollary~\ref{cor-diff}, it thus suffices to prove Theorem~\ref{th: f=0 diffusion}.

We will use the reflection argument developed in \cite{EG15} to prove Theorem~\ref{th: f=0 diffusion}. 
Observe that the evolution of $\big\{(Z_1^N(t),Z_2^N(t))\big\}_{t\geq 0}$ can be described by the following stochastic recursion which is explained in detail below.
\begin{equation}\label{eq: f=0 process}
\begin{split}
Z_1^N(t)&=Z_1^N(0)+A_1\left(\int_0^t(KN-Z_1^N(s)-KZ_2^N(s))\dif s\right)
-D_1(\lambda(N) t)+U_1^N(t)\\
Z_2^N(t)&=Z_2^N(0)+U_1^N(t)-D_2\left(\int_0^t(K+1)Z_2^N(t)\dif s\right)-U_2^N(t),
\end{split}
\end{equation}
where $A_1$, $D_1$ and $D_2$ are unit-rate Poisson processes, and
\begin{equation}
\begin{split}
U_1^N(t)&=\int_0^t\ind{Z_1^N(s)=0}\dif D_1(\lambda(N)s)\\
U_2^N(t)&=\int_0^t\ind{Z_2^N(s)=C\sqrt{N}}\dif D_1(\lambda(N)s).
\end{split}
\end{equation}

The components of Equation~\eqref{eq: f=0 process} can be explained as follows. 
The process $Z_1(t)$ increases by one when a departure occurs from a server pool with  at most $K$ active tasks, and it decreases by one when an arriving task is assigned to a server pool with at most $K$ active tasks. 
Hence the instantaneous rate of increase at time $s$ is given by 
\begin{align*}
\sum_{i=1}^{K}i(Q^N_i(t)-Q^N_{i+1}(t))&=\sum_{i=1}^{K}Q^N_i(t)-KQ^N_{K+1}(t)\\
&=KN - \sum_{i=1}^{K}(N-Q^N_i(t))-KQ^N_{K+1}(t) \\
&= KN-Z_1^N(t)-KZ_2^N(t),
\end{align*}
and the instantaneous rate of decrease is given by the arrival rate $\lambda(N)$. 
But $Z_1^N$ cannot be negative, and hence the arrivals when $Z_1^N$ is zero, add to $Z_2^N$, and the rate of increase of the $Z_2^N$ process is given by the overflow process $U_1^N$. 
Since $B=K+1$, the rate of decrease of $Z_2^N$ equals the total number of tasks at server pools with exactly $K+1$ tasks, which is given by $(K+1)Z_2^N$. 
This explains the rate in the Poisson process $D_2(\cdot)$. 
Finally, since $Z_2^N$ is upper bounded by $N$, $U_2^N$ is the overflow of the $Z_2^N$ process with $C=\sqrt{N}$, i.e., the number of arrivals to the system when $Z_2^N=N$. 
The existence and uniqueness of the above stochastic recursion can be proved following the arguments in \cite[Section 2]{PTRW07}.

\paragraph{Martingale representation}
We now introduce the martingale representation for~\eqref{eq: f=0 process}, and following similar arguments as in \cite[Subsection 4.3]{EG15}, we obtain the following scaled, square integrable martingales with appropriate filtration:
\begin{equation}\label{eq:martingales1}
\begin{split}
M^N_{1,1}(t)&=\frac{1}{\sqrt{N}}A_1\left(\int_0^t(KN-Z_1^N(s)-KZ_2^N(s))\dif s\right)-\frac{1}{\sqrt{N}}\int_0^t(KN-Z_1^N(s)-KZ_2^N(s))\dif s\\
M_{1,2}^N(t)&=\frac{1}{\sqrt{N}}(D_1(\lambda(N) t)-\lambda(N)t)\\
M^N_{2,1}(t)&=\frac{1}{\sqrt{N}}D_2\left(\int_0^t(K+1)Z_2^N(t)\dif s\right)-\frac{K+1}{\sqrt{N}}\int_0^tZ_2^N(s)\dif s,
\end{split}
\end{equation}
with $V_1^N(t):=U_1^N(t)/\sqrt{N}$ and $V_2^N(t):=U_2^N(t)/\sqrt{N}$, and the predictable quadratic variation processes given by
\begin{equation}
\begin{split}
\langle M^N_{1,1}\rangle(t)&=\frac{1}{N}\int_0^t(KN-Z_1^N(s)-KZ_2^N(s))\dif s\\
\langle M_{1,2}^N\rangle(t)&=\frac{\lambda(N)t}{N}\\
\langle M^N_{2,1}\rangle(t)&=\frac{K+1}{N}\int_0^tZ_2^N(s)\dif s.
\end{split}
\end{equation}

Therefore, we have the following martingale representation for~\eqref{eq: f=0 process}:
\begin{equation}\label{eq:martingale rep}
\begin{split}
\zeta^N_1(t)&=\zeta^N_1(0)+M_{1,1}^N(t)-M_{1,2}^N(t)
-\int_0^t(\zeta_1^N(s)+K\zeta_2^N(s))\dif s+\frac{t(KN-\lambda(N))}{\sqrt{N}}+V_1^N(t)\\
\zeta_2^N(t)&=\zeta_2^N(0)+V_1^N(t)-M_{2,1}^N(t)
-(K+1)\int_0^t\zeta_2^N(s)\dif s -V_2^N(t)
\end{split}
\end{equation}

\paragraph{Convergence of independent martingales}
We now show the convergence of the martingales defined in~\eqref{eq:martingales1} using the functional central limit theorem.
\begin{lemma}\label{lem:martingale convergence}
As $N\to\infty$,
$$\left\{\left(M^N_{1,1}(t), M^N_{1,2}(t), M^N_{2,1}(t)\right)\right\}_{ t\geq 0}\dto \left\{\left(\sqrt{K} W_1(t),\sqrt{K} W_2(t), 0\right)\right\}_{t\geq 0}$$ in $D_{\R^3}[0,\infty),$  where $W_1$, $W_2$ are independent standard Brownian motions.
\end{lemma}
\begin{proof}
From Theorem~\ref{fluidjsqd} we know that for any fixed $T\geq 0$, 
$$\sup_{t\in [0,T]}Z_1^N(t)/N\pto 0 \quad\text{and}\quad\sup_{t\in [0,T]}Z_2^N(t)/N\pto 0.$$
This yields the following convergence results: 
\begin{equation}
\begin{split}
\langle M^N_{1,1}\rangle(T)&\pto KT\\
\langle M_{1,2}^N\rangle(T)&\pto\lambda T=KT\\
\langle M^N_{2,1}\rangle(T)&\pto 0.
\end{split}
\end{equation}
Then, using a random time change, the continuous-mapping theorem and functional central limit theorem \cite[Theorem 4.2]{PTRW07}, \cite[Lemma 6]{EG15}, we get the convergence of the martingales.
\end{proof}
Now we use the continuous-mapping theorem to prove the convergence of the processes described in~\eqref{eq:martingale rep}. To proceed in that direction, we need the following proposition, which is analogous to \cite[Lemma 1]{EG15}.

\begin{proposition}\label{th:continuous}
Let $B\in\bar{\R}_+$, $b\in\R^2$, $(y_1,y_2)\in D^2[0,\infty)$, and $(x_1,x_2)\in D^2[0,\infty)$ be defined by the following recursion: for $t\geq 0$,
\begin{equation}\label{eq:recursion1}
\begin{split}
x_1(t)&=b_1+y_1(t)+\int_0^t(-x_1(s)- Kx_2(s))\dif s+u_1(t)\\
x_2(t)&=b_2+y_2(t)+(K+1)\int_0^t(-x_2(s))\dif s+u_1(t)-u_2(t),
\end{split}
\end{equation}
where $u_1$ and $u_2$ are unique non-decreasing functions in $D$, such that
\begin{equation}\label{eq:reflection}
\begin{split}
\int_0^{\infty}\ind{x_1(s)>0}\dif u_1(t) & =0\\
\int_0^{\infty}\ind{x_2(s)<B}\dif u_2(t) & =0.
\end{split}
\end{equation}
Then, $(x,u)$ is the unique solution to the above system. 
Furthermore, there exist functions $(f,g):(\bar{\R},\R^2, D^2_\R[0,\infty))\to (D^2_\R[0,\infty),D^2_\R[0,\infty))$ with $x=f(B,b,y)$ and $u=g(B,b,y)$, which are continuous when $\bar{\R}_+$ is equipped with order topology, $D_\R[0,\infty)$ is equipped with topology of uniform convergence over compact sets, and $(\bar{\R},\R^2, D^2_\R[0,\infty))$ and $(D^2_\R[0,\infty),D^2_\R[0,\infty))$ are equipped with product topology.
\end{proposition}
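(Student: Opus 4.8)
The plan is to recognize \eqref{eq:recursion1}--\eqref{eq:reflection} as a coupled pair of one-dimensional Skorokhod reflection problems and to solve it by a contraction-mapping (Picard) argument on short time intervals, patching the intervals together and then reading off continuity of the solution map from the very same Lipschitz estimates. First I would recall the two one-sided reflection maps. For $\psi\in D_{\R}[0,\infty)$ with $\psi(0)\geq 0$, let $\Gamma_0(\psi)=(\phi,\eta)$ be the lower reflection at $0$: $\phi=\psi+\eta\geq 0$, $\eta$ non-decreasing, $\eta(0)=0$, $\int_0^\infty\ind{\phi(s)>0}\dif\eta(s)=0$, explicitly $\eta(t)=\sup_{s\leq t}(-\psi(s))^+$. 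Similarly, for a barrier $B\in\bar{\R}_+$ let $\Gamma^B(\psi)=(\phi,\eta)$ be the upper reflection at $B$, with $\phi=\psi-\eta\leq B$, $\eta$ non-decreasing, $\int_0^\infty\ind{\phi(s)<B}\dif\eta(s)=0$, and $\eta\equiv 0$ when $B=\infty$. Both maps are Lipschitz in the uniform-on-compacts topology, with $\norm{\phi_1-\phi_2}_T\leq 2\norm{\psi_1-\psi_2}_T$ and $\norm{\eta_1-\eta_2}_T\leq\norm{\psi_1-\psi_2}_T$; moreover $\Gamma^B$ depends continuously on $B$, with $\Gamma^B\to\Gamma^\infty$ as $B\to\infty$ in the order topology and the upper regulator vanishing in that limit. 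These are classical facts about the one-dimensional Skorokhod map.

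Next I would set up the fixed-point operator. Fix a horizon $T>0$ to be chosen, and on $D_{\R^2}[0,T]$ with the sup norm define $\Phi$ as follows: given a candidate $(x_1,x_2)$, form $\tilde{x}_1=b_1+y_1(\cdot)+\int_0^\cdot(-x_1(s)-Kx_2(s))\dif s$ and set $(x_1',u_1)=\Gamma_0(\tilde{x}_1)$; then form $\tilde{x}_2=b_2+y_2(\cdot)-(K+1)\int_0^\cdot x_2(s)\dif s+u_1(\cdot)$ and set $(x_2',u_2)=\Gamma^B(\tilde{x}_2)$; finally put $\Phi(x_1,x_2)=(x_1',x_2')$. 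A fixed point of $\Phi$ is exactly a solution of \eqref{eq:recursion1}--\eqref{eq:reflection} on $[0,T]$. Perturbing $(x_1,x_2)$ to $(\hat{x}_1,\hat{x}_2)$ changes $\tilde{x}_1$ by at most $(1+K)T\,\norm{(x_1,x_2)-(\hat{x}_1,\hat{x}_2)}_T$ through the integral term; the Lipschitz bound for $\Gamma_0$ controls $x_1'$ and $u_1$; the change in $\tilde{x}_2$ is then bounded by its own integral plus the already-controlled change in $u_1$; and the Lipschitz bound for $\Gamma^B$ controls $x_2'$. Collecting constants gives $\norm{\Phi(x_1,x_2)-\Phi(\hat{x}_1,\hat{x}_2)}_T\leq c(K)\,T\,\norm{(x_1,x_2)-(\hat{x}_1,\hat{x}_2)}_T$, so choosing $T=T(K)$ small makes $\Phi$ a contraction. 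Banach's theorem then yields a unique solution on $[0,T]$, and since $T$ depends only on $K$ I would iterate on $[T,2T],[2T,3T],\dots$, using the terminal values as fresh initial data, to obtain the unique global solution $(x,u)$ on $[0,\infty)$. This proves the first assertion.

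For the continuity of $(f,g)$ I would run the same estimates with perturbed data $(B^n,b^n,y^n)\to(B,b,y)$. Subtracting the two fixed-point identities on $[0,T]$ and using that $\Phi$ is a contraction gives $\norm{x^n-x}_T\leq \frac{1}{1-c(K)T}\big(C_1|b^n-b|+C_2\norm{y^n-y}_T+C_3\,\delta(B^n,B)\big)$, where $\delta(B^n,B)$ encodes the barrier dependence of $\Gamma^B$ (interpreted through the order topology, so that $B^n\to\infty$ forces $\delta(B^n,B)\to 0$ and $u_2^n\to 0$); the regulators inherit the same bound. Patching across the finitely many subintervals covering any compact horizon $[0,T_0]$ propagates this estimate with a constant that grows geometrically in the number of subintervals but stays finite on $[0,T_0]$, yielding $\sup_{[0,T_0]}\norm{x^n-x}\to 0$ and likewise for $u^n$. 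This is exactly joint continuity of $(f,g)$ in the stated product topology.

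The main obstacle is the barrier dependence: handling $B\in\bar{\R}_+$ uniformly, in particular establishing continuity of $\Gamma^B$ jointly in $(\psi,B)$ at $B=\infty$ under the order topology and verifying that the upper regulator $u_2$ degenerates continuously to $0$ as $B\to\infty$, so that the finite-$B$ reflected dynamics converge to the singly-reflected dynamics relevant to Theorem~\ref{th: f=0 diffusion}. The coupling through $u_1$, which enters the $x_2$-equation while being itself a nonlinear functional of $(x_1,x_2)$, also needs care, but it is tamed because $u_1$ appears only after a Skorokhod map whose Lipschitz constant is absorbed into $c(K)$.
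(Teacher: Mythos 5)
Your proposal is correct and follows essentially the same route as the paper, which omits the proof and defers to \cite[Lemma~1]{EG15}: that argument is precisely the decomposition into one-dimensional Skorokhod reflection maps ($\Gamma_0$ at zero for $x_1$, $\Gamma^B$ at the upper barrier for $x_2$), a contraction on a short horizon $T=T(K)$ exploiting the Lipschitz property of the reflection maps and the factor $T$ from the drift integrals, concatenation over successive intervals, and continuity of the solution map read off from the same Lipschitz estimates (including the degeneration $u_2\to 0$ as $B\to\infty$). No gaps to report.
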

The proof of the above proposition follows from similar arguments as described in the proof of \cite[Lemma 1]{EG15}, and hence is omitted.
\begin{proof}[Proof of Theorem~\ref{th: f=0 diffusion}]
Observe that the stochastic recursion equations described by \eqref{eq:martingale rep} fit in the framework of the recursion described by \eqref{eq:recursion1}, by taking $b_i=\zeta_i^N(0)$, $i=1,2$, $C=\sqrt{N}$, $y_1(t)=M^N_{1,1}(t)-M^N_{1,2}(t)+t(KN-\lambda(N))/\sqrt{N}$, and $y_2(t)=-M^N_{2,1}(t)$ for the $N^{\mathrm{th}}$ process.

By the assumptions of the theorem we have $\zeta_i^N(0)\dto \zeta_i(0)$, for $i=1,2$. Also, by Lemma~\ref{lem:martingale convergence}, $\big\{(M^N_{1,1}(t), M^N_{1,2}(t), M^N_{2,1}(t))\big\}_{t\geq 0}\dto \big\{(\sqrt{K} W_1(t), \sqrt{K} W_2(t), 0)\big\}_{t\geq 0}$. Hence, for the limiting process, $y_1(t)=\sqrt{K} W_1(t)-\sqrt{K} W_2(t)+\beta t\equiv \sqrt{2K}W(t)+\beta t$ and $y_2(t)\equiv 0$.
Finally, using the continuous-mapping theorem we get the desired convergence as in the proof of~\cite[Theorem~2]{EG15}.
\end{proof}

\section{Performance Implications}\label{sec:performance}

\subsection{Evolution of number of tasks at tagged server pool}
We now provide some insights into the steady-state dynamics of the number of tasks at a particular server pool in the regime $d(N)\to\infty$ as $N\to\infty$.
Due to exchangeability of the server pools, asymptotically, the dynamics at a particular server pool depends on the system only through the mean-field limit, or the global system state averages.
Based on the fixed point~\eqref{eq:fixed point}, we claim (without proof) that the steady-state dynamics can be described as follows:
\begin{enumerate}[{\normalfont (i)}]
\item If a server pool contains $\lceil\lambda\rceil$ active tasks, then with high probability no further task will be assigned to it. 
\item Similarly, if a departure occurs from a server pool having $K=\lfloor\lambda\rfloor$ active tasks, a task will immediately be assigned to it.
\item Since the total flow of arrivals that join server pools with exactly $K$ active tasks, are
distributed uniformly among all such server pools, each server pool with exactly $K$ active tasks will observe an arrival rate $\lambda p_K(\qq^\star)/(q^\star_K-q^\star_{K+1}) = (K+1)f/(1-f)$.
\item Finally, the rate of departure from a server pool with $K+1$ active tasks is given by $K+1$.
\end{enumerate}
Let $S^{\sss d(N)}_k(t)$ denote the number of tasks at server pool~$k$ at time $t$ in the $N^{\mathrm{th}}$ system under the JSQ$(d(N))$ scheme.
Combining all the above, provided $d(N)\to\infty$ as $N\to\infty$, the process $\big\{S^{\sss d(N)}_k(t)\big\}_{t\geq 0}$ converges in distribution to the process $\big\{S(t)\big\}_{t\geq 0}$, described as follows:
\begin{enumerate}[{\normalfont (i)}]
\item If $f>0$, then $\big\{S(t)\big\}_{t\geq 0}$ is a two-state process, taking values $K$ and $K+1$, with transition rate from $K$ to $K+1$ given by $(K+1)f/(1-f)$, and from $K+1$ to $K$ given by $K+1$. 
So the steady-state distribution is $\Pro{S=K}=1-f$, and $\Pro{S=K+1}=f$, i.e., for $i\geq 1$, $\Pro{S=i}=q_i^\star-q_{i+1}^\star$, which agrees with the fixed point~\eqref{eq:fixed point} of the fluid limit.
\item If $f=0$, then $\big\{S(t)\big\}_{t\geq 0}$ is a constant process, taking value $\lambda=K$.
\end{enumerate}

\subsection{Evolution of number of tasks observed by tagged task}
To analyze the performance perceived by a particular tagged task with execution time $T$, observe that in steady state the probability that it will join a server pool with $i$ active tasks is given by
$p_i(\qq^\star)=K(1-f)/\lambda$ for $i=K-1$, $(K+1)f/\lambda$ for $i=K$, and 0 otherwise.
In the time interval $[0,T]$, the number of active tasks in the server pool it joins, is again a birth-death process $\big\{\hat{S}(t)\big\}_{0\leq t\leq T}$, whose dynamics is the same as of $\big\{S(t)\big\}_{t\geq 0}$ process conditioned on having one permanent task (i.e., its departure is not allowed).
Therefore, $\big\{\hat{S}(t)\big\}_{0\leq t\leq T}$ can be described as follows:
\begin{enumerate}[{\normalfont (i)}]
\item If $f>0$, then $\big\{\hat{S}(t)\big\}_{0\leq t\leq T}$ is a two-state process, taking values $K$ and $K+1$, with transition rate from $K$ to $K+1$ given by $(K+1)f/(1-f)$, and from $K+1$ to $K$ given by~$K$.
The steady-state distribution of the process is then given by
$\Pro{\hat{S}=K}=K(1-f)/\lambda$, and $\Pro{\hat{S}=K+1}=(K+1)f/\lambda$.
\item If $f=0$, then $\big\{\hat{S}(t)\big\}_{0\leq t\leq T}$ is a constant process, taking value $\lambda=K$.
\end{enumerate}
Observe in both of the above two cases that the initial distribution of $\hat{S}(t)$ coincides with its stationary distribution.
Now, if the performance perceived by the tagged task is measured as a function $h:\N\to \R$ of the number of concurrent tasks, then the relevant performance measure is given by 
\begin{align}
\E{\frac{1}{T}\int_0^T h(\hat{S}(t))\dif t}=\frac{1}{\lambda}((1-f)Kh(K)+f(K+1)h(K+1)),
\end{align}
independent of the execution time $T$.
Notice that if $h(x) = 1/(x+1),$ then the above performance measure becomes the constant $(K(K+2)-f)/((K+1)(K+2)).$

\subsection{Loss probabilities}
We now examine the asymptotic behavior of the loss probability when the buffer capacity at each server is $B<\infty$ and the arrival rate $\lambda(N)$ satisfies~\eqref{eq:f=0} with $K=B$.
We will establish lower and upper bounds, and prove that these asymptotically coincide.
When the buffer capacity $B$ is finite, to characterize the asymptotic steady-state loss probability of the JSQ$(d(N))$ scheme, we bound it from below and above by that of an ordinary and a modified Erlang loss system, respectively.
The lower and upper bounds rely on a stochastic comparison.

Suppose $Y_1(t)$ and $Y_2(t)$ are two non-explosive, continuous-time Markov processes taking values in a complete separable metric space $E$. 
Let $X_1(t)$ and $X_2(t)$ be two birth-death processes defined on the same probability space, with finite state spaces $\big\{0,1,\ldots,n_1\big\}$ and $\big\{0,1,\ldots,n_2\big\}$, whose birth rates are $f_1(X_1(t), Y_1(t))$ and $f_2(X_2(t), Y_2(t))$, and death rates are $g_1(X_1(t), Y_1(t))$ and $g_2(X_2(t), Y_2(t))$, respectively.
\begin{lemma}\label{lem:bdprocess-stoch-comp}
If $n_1\leq n_2$, and for all $x\in \big\{0,1,\ldots,n_1\big\}$, $f_1(x,y_1)\leq f_2(x,y_2)$ and $g_1(x,y_1)\geq g_2(x,y_2)$, for all $y_1, y_2\in E$, then 
$\big\{X_1(t)\big\}_{t\geq 0}\leq_{st} \big\{X_2(t)\big\}_{t\geq 0}$, provided $X_1(0)\leq_{st} X_2(0)$.
\end{lemma}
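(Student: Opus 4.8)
The plan is to prove the stronger \emph{pathwise} statement that, on a suitable common probability space, $X_1(t) \leq X_2(t)$ for all $t \geq 0$ almost surely, from which the stochastic ordering of the processes follows at once. First I would invoke Strassen's theorem to realize the initial states so that $X_1(0) \leq X_2(0)$ almost surely, which is possible precisely because $X_1(0) \leq_{st} X_2(0)$.

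Next, conditionally on the entire sample paths of the modulating processes $Y_1$ and $Y_2$, both $X_1$ and $X_2$ become time-inhomogeneous birth-death processes, and I would build an explicit \emph{order-preserving coupling} of them given these paths. Writing $x_1 = X_1(t)$ and $x_2 = X_2(t)$, the coupling is governed by the following rule. When $x_1 < x_2$ (there is slack), let the two chains jump independently at their prescribed rates; a single up-jump of $X_1$ or down-jump of $X_2$ then cannot break the inequality. When $x_1 = x_2 = x$, synchronize the transitions: births occur jointly at the common rate $f_1(x, Y_1(t))$, with an additional $X_2$-only up-jump at rate $f_2(x, Y_2(t)) - f_1(x, Y_1(t)) \geq 0$; symmetrically, deaths occur jointly at the common rate $g_2(x, Y_2(t))$, with an additional $X_1$-only down-jump at rate $g_1(x, Y_1(t)) - g_2(x, Y_2(t)) \geq 0$. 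The nonnegativity of these excess rates is exactly guaranteed by the hypotheses $f_1(x, y_1) \leq f_2(x, y_2)$ and $g_1(x, y_1) \geq g_2(x, y_2)$, assumed to hold for \emph{all} $y_1, y_2 \in E$; this uniformity in the background variables is what makes the construction valid irrespective of the (possibly dependent) processes $Y_1, Y_2$.

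I would then verify two things. First, that this is a genuine coupling: one checks that the up- and down-jump rates of $X_1$ equal $f_1(X_1(t), Y_1(t))$ and $g_1(X_1(t), Y_1(t))$ in every configuration (and likewise for $X_2$), so that each coordinate's jump rates depend only on its own state and its own modulating process. Consequently, given $Y_i$, the marginal $X_i$ is autonomously Markov with the correct birth and death rates, and the joint law of $(X_i, Y_i)$ agrees with the original one. Second, that the region $\{(x_1, x_2): x_1 \leq x_2\}$ is invariant: by construction $X_1$ never makes an up-jump alone when $x_1 = x_2$, and $X_2$ never makes a down-jump alone when $x_1 = x_2$, while the boundary behavior is compatible because $n_1 \leq n_2$ (so that $X_2$ can always follow $X_1$ upward, and $X_1$ is capped at $n_1 \leq n_2$). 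A short case analysis over the admissible transitions then shows that $x_1 \leq x_2$ is preserved at every jump.

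Finally, combining the almost-sure initial ordering with this invariance gives $X_1(t) \leq X_2(t)$ for all $t$ almost surely under the coupling, and integrating over the law of $(Y_1, Y_2)$ yields $\{X_1(t)\}_{t \geq 0} \leq_{st} \{X_2(t)\}_{t \geq 0}$. The main obstacle I anticipate is the bookkeeping needed to confirm that the coupled dynamics reproduce the correct marginal laws — in particular, arguing that each marginal $X_i$ remains Markov given only $Y_i$ despite the coupling referencing the other coordinate — together with the careful treatment of the reflecting boundaries at $0$, $n_1$, and $n_2$, where some of the prescribed rates are truncated to zero.
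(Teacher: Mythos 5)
Your proposal is correct and follows essentially the same route as the paper: construct an explicit order-preserving coupling of the two birth-death processes (valid because the rate inequalities hold uniformly over all $y_1,y_2\in E$, so no control over the joint law of $Y_1,Y_2$ is needed) and then check by forward induction on jump times that the region $\{x_1\leq x_2\}$ is invariant. The only difference is cosmetic: the paper synchronizes via two max-rate exponential clocks and a single uniform variable at each ring, whereas you split rates into a common part plus excess parts, but both are standard monotone couplings yielding the same pathwise ordering.
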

\begin{proof}
The proof is fairly straightforward, but we present it briefly for the sake of completeness.
First we suitably couple the two processes, and then
as before, using the forward induction on event times, we show that the inequality holds throughout the sample path.
Define the processes $\big(X_1(\cdot),X_2(\cdot),Y_1(\cdot),Y_2(\cdot)\big)$ on the same probability space.
Due to the assumptions in the theorem, we do not need any condition on the evolution of $Y_1$ and $Y_2$, provided that they are defined on the same probability space.
Maintain two exponential clocks of rate $M_B:=\max\big\{f_1(x_1,y_1),f_2(x_2,y_2)\big\}$ (birth-clock) and $M_D:=\max\big\{g_1(x_1,y_1),g_2(x_2,y_2)\big\}$ (death-clock), respectively.
When the birth-clock rings, draw a single uniform$[0,1]$ random variable $u$ say, and a birth occurs in the $X_1$ process and $X_2$ process if $u\leq f_1(x_1,y_1)/M_B$ and $u\leq f_2(x_2,y_2)/M_B$, respectively.
Couple the deaths also, in a similar fashion.
Note that the processes thus constructed satisfy the relevant statistical laws in terms of the transition rates $f_1(x_1, y_1)$ and $f_2(x_2, y_2)$.

Now under the above coupling we prove the inequality. Assume that the inequality holds at event time $t_0$, and $X_1(t_0) = x_1$ and $X_2(t_0) = x_2$.
Note that if $x_1<x_2$, then trivially the inequality holds at the next event time $t_1$. 
Therefore, without loss of generality, assume $x_1= x_2=x\leq n_1$.
We will distinguish between two cases depending on whether the birth-clock or death-clock rings at time epoch $t_1$.
In the former case, observe that since $f_1(x,y_1)\leq f_2(x,y_2)$ for all $y_1, y_2\in E$, whenever there is a birth in the $X_1$ process, there will be a birth in the $X_2$ process as well.
Thus the inequality is preserved.
Alternatively, if the death-clock rings at time epoch $t_1$, then observe that since $g_1(x,y_1)\geq g_2(x,y_2)$ for all $y_1, y_2\in E$, whenever there is a death in the $X_2$ process, there will be a death in the $X_1$ process as well, and the inequality is preserved. This completes the proof.
\end{proof}

Denote by $\Er(C,\lambda)$ an Erlang loss system with
capacity $C$, load $\lambda$, and exponential service times with unit mean.
We further introduce a modified Erlang loss system $\hEr(n,d)$ with capacity $B(N-n)$, 
and arrival rate $\lambda$, with unit-exponential service times, where a fraction
$$p(n,d):=\left(1-\frac{n+1}{N}\right)^{d},$$ 
of tasks is rejected upfront,
independent of any other processes.
Note that the number of active tasks in the $\hEr(n,d)$ system evolves like an $\Er(B(N-n),\lambda p(n,d))$ system.

Define $C(N):=BN$, $\hat{C}(N):=B (N - n(N))$, and $\hat{\lambda}(N):= \lambda(N)p(n(N),d(N))$. 
Denote the total number of active tasks at time $t$ in the $N^{\mathrm{th}}$ system following the JSQ$(d(N))$ scheme, an $\Er(C(N),\lambda(N))$ system, and an $\hEr(n(N),d(N))$ system by $Y^{\sss d(N)}(t)$, $Y^N_\Er(t)$, and $Y^N_{\hEr}(t)$, respectively.
Denote the associated steady-state loss probabilities by $L^{\sss d(N)}$, $L(C,\lambda)$ and $\hat{L}(n,d)$, respectively.

\begin{lemma}\label{lem:stationary-blocking}
For all $N\geq 1$, $d(N)\geq 1$, and $n(N)<N$,
\begin{align*}
&\mathrm{(a)}\quad \big\{Y^N_{\hEr}(t)\big\}_{t\geq 0}\leq_{st} \big\{Y^{\sss d(N)}(t)\big\}_{t\geq 0} \leq_{st} \big\{Y^N_{\Er}(t)\big\}_{t\geq 0},\\
&\mathrm{(b)}\quad L(C(N),\lambda(N))\leq L^{\sss d(N)}\leq \hat{L}(n(N),d(N)).
\end{align*}
\end{lemma}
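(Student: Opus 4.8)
The plan is to collapse the entire statement onto the one–dimensional process counting the total number of active tasks, exploiting the fact that, since each task departs at rate one irrespective of co-located tasks, this total-count process is a birth–death process \emph{modulated} by the full occupancy state. Concretely, for any of the three systems the total count $Y(\cdot)$ has downward rate equal to its current value (the aggregate departure rate is $\sum_i \mu_i = \sum_i i(Q_i-Q_{i+1}) = \sum_i Q_i = Y$ by Abel summation) and upward rate equal to $\lambda(N)$ times the probability that an arriving task is admitted. For $\Er(C(N),\lambda(N))$ this admission probability is $\ind{Y<BN}$; for $\hEr(n(N),d(N))$ it is the survival probability $1-p(n(N),d(N))$ of the upfront thinning times $\ind{Y<B(N-n(N))}$; and for the $\JSQ(d(N))$ scheme it is $1-(Q_B^{\sss d(N)}/N)^{d(N)}$ (using sampling with replacement, which maximizes the blocking probability and is therefore the binding case). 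I would prove part~(a) by applying the stochastic comparison Lemma~\ref{lem:bdprocess-stoch-comp} to these modulated birth–death processes, taking the modulating processes to be the full occupancy states, and then derive part~(b) from part~(a) via a flow-balance identity.

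For the \emph{upper} bound, compare $X_1=Y^{\sss d(N)}$ (capacity $n_1=BN$) with $X_2=Y^N_{\Er}$ (capacity $n_2=BN$): the death rates coincide, $g_1=g_2=Y$, and the admission rate satisfies $\lambda(N)(1-(Q_B^{\sss d(N)}/N)^{d(N)})\leq\lambda(N)\ind{Y<BN}$, since the sampled pools are all saturated exactly when $Y=BN$; Lemma~\ref{lem:bdprocess-stoch-comp} then gives $Y^{\sss d(N)}\leq_{st}Y^N_{\Er}$. For the \emph{lower} bound, the key observation is that whenever $Y^{\sss d(N)}<B(N-n(N))$ the number of saturated pools obeys $Q_B^{\sss d(N)}\leq Y^{\sss d(N)}/B<N-n(N)$, whence $Q_B^{\sss d(N)}/N\leq 1-(n(N)+1)/N$ and the admission probability is at least $1-(1-(n(N)+1)/N)^{d(N)}=1-p(n(N),d(N))$, precisely the admission probability of $\hEr$; since $\hat C(N)=B(N-n(N))\leq BN$, Lemma~\ref{lem:bdprocess-stoch-comp} yields $Y^N_{\hEr}\leq_{st}Y^{\sss d(N)}$.

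Identifying this admission rate and verifying the rate inequalities is where I expect the main obstacle to lie. One subtlety must be flagged: Lemma~\ref{lem:bdprocess-stoch-comp} is phrased with the rate inequalities holding for \emph{all} modulating states, whereas the bound $Q_B^{\sss d(N)}\leq Y^{\sss d(N)}/B$ ties the modulating state to the current value of the modulated process. This is harmless, because the coupling in the proof of Lemma~\ref{lem:bdprocess-stoch-comp} only ever invokes the rate inequality at the current, and hence mutually consistent, states of the two coupled chains; I would make this explicit rather than weaken the lemma, and note that without-replacement sampling only decreases the blocking probability, so it is covered a fortiori.

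For part~(b) I would use the flow-balance identity valid for each of the three finite-state (hence ergodic) systems: in stationarity the admitted-task throughput equals $\E{Y}$, since each admitted task is eventually served and departs at unit rate, so the aggregate service-completion rate is $\E{Y}$. As all three systems are fed the same total arrival rate $\lambda(N)$, the long-run fraction of lost arrivals is $1-\E{Y}/\lambda(N)$ in every case. Starting all three systems empty, part~(a) gives $Y^N_{\hEr}(t)\leq_{st}Y^{\sss d(N)}(t)\leq_{st}Y^N_{\Er}(t)$ for all $t$; letting $t\to\infty$ and using convergence to the unique stationary laws transfers the ordering to stationarity, so $\E{Y^N_{\hEr}}\leq\E{Y^{\sss d(N)}}\leq\E{Y^N_{\Er}}$. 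Dividing by $\lambda(N)$ and subtracting from $1$ reverses the inequalities, yielding $L(C(N),\lambda(N))\leq L^{\sss d(N)}\leq\hat L(n(N),d(N))$ and completing the proof.
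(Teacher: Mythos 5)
Your proof is correct and follows essentially the same route as the paper: both bound the birth rate of the total-task process $Y^{\sss d(N)}$ above by $\lambda(N)\ind{Y<BN}$ and below by $\lambda(N)(1-p(n(N),d(N)))$ whenever $Y<B(N-n(N))$ (your derivation via $Q_B^{\sss d(N)}\leq Y^{\sss d(N)}/B$ is the same computation the paper phrases through the event that one of the $n(N)+1$ lowest-ordered pools is sampled), apply Lemma~\ref{lem:bdprocess-stoch-comp}, and deduce part~(b) from part~(a) by Little's law. Your explicit remark that the rate inequalities need only hold at the mutually consistent current states of the coupled chains is a welcome clarification of a point the paper leaves implicit, but it does not change the argument.
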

\begin{proof}
(a)
For the lower bound, observe that the rate of increase of the process $Y^{\sss d(N)}(\cdot)$ is at most that of the process $Y^N_\Er(\cdot)$, and the rate of decrease at any state is the same in both processes.
Thus, Lemma~\ref{lem:bdprocess-stoch-comp} implies that if both systems start from the same occupancy states, then $\big\{Y^{\sss d(N)}(t)\big\}_{t\geq 0}\leq_{st} \big\{Y^{N}_{\Er}(t)\big\}_{t\geq 0}$.
Consequently, in the steady state, $Y^{\sss d(N)}(\infty)\leq_{st} Y^N_{\Er}(\infty)$, and invoking Little's law yields $L(C(N),\lambda(N))\leq L^{\sss d(N)}.$

For the upper bound, 
first observe that at any arrival, as long as one of the $n(N)$ lowest-ordered server pools is sampled, which occurs with probability $1 - p(n(N), d(N))$, a task can only get lost when the total number of active tasks is at least $B (N - n(N))$.
Thus when the total number of active tasks $Y^{\sss d(N)}(\cdot)$ in the system under the JSQ$(d(N))$ scheme is $y$, the rate of increase of $Y^{\sss d(N)}(t)$ is at least $\lambda(N)(1 - p(n(N), d(N)))$ if $y \leq B (N - n(N))$,  and the rate of decrease is given by $y$.
Comparing with the modified Erlang loss system $\hEr(n(N),d(N))$ and using Lemma~\ref{lem:bdprocess-stoch-comp}, we obtain that if $Y^{\sss d(N)}(0)\geq_{st} Y^N_{\hEr}(0)$, then
$$\big\{Y^{\sss d(N)}(t)\big\}_{t\geq 0}\geq_{st} \big\{Y^N_{\hEr}(t)\big\}_{t\geq 0}.$$
The proof of the upper bound $L^{\sss d(N)}\leq \hat{L}(n(N),d(N))$ is then completed by again invoking Little's law.

(b) Little's law implies 
$$L^{\sss d(N)} = 1 - \frac{1}{\lambda(N)}\lim_{T\to\infty}\int_0^T Y^{\sss d(N)}(t)\dif t,$$
and similarly for the $\Er(C(N),\lambda(N))$ and $\hEr(n(N),d(N))$ systems. 
 Statement (b) then follows from statement (a).
\end{proof}

The proposition below states that the limiting loss probability for the JSQ$(d(N))$ scheme vanishes as long as $d(N)\to\infty$. 
\begin{proposition}
 For any $\lambda\leq B$, if $d(N)\to\infty$ as $N \to\infty$, then $L^{\sss d(N)}\to 0$, as $N\to\infty.$
\end{proposition}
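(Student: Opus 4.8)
The plan is to bound $L^{\sss d(N)}$ from above using Lemma~\ref{lem:stationary-blocking}(b), which yields $L^{\sss d(N)}\leq \hat{L}(n(N),d(N))$ for \emph{every} admissible auxiliary sequence $n(N)<N$. Since $n(N)$ is a free parameter of the upper bound, it suffices to produce one sequence $n(N)$ along which $\hat{L}(n(N),d(N))\to 0$. Writing $p(N):=p(n(N),d(N))=(1-(n(N)+1)/N)^{\sss d(N)}$ and decomposing the loss in the modified Erlang system into the mass rejected upfront and the mass blocked by the residual loss system (fed by the admitted stream of rate $\lambda(N)(1-p(N))$ identified in the proof of Lemma~\ref{lem:stationary-blocking}), one has
\[
\hat{L}(n(N),d(N))=p(N)+(1-p(N))\,L\big(B(N-n(N)),\,\lambda(N)(1-p(N))\big),
\]
so the problem reduces to choosing $n(N)$ that simultaneously sends the upfront term $p(N)$ and the Erlang-B term to zero.

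First I would isolate the two requirements. Since $p(N)\leq \exp(-d(N)(n(N)+1)/N)$, the upfront term vanishes as soon as $d(N)\,n(N)/N\to\infty$. For the Erlang-B term I would invoke two classical facts about the loss formula $L(C,a)$: it is nondecreasing in the offered load $a$, and at criticality $L(C,C)=O(1/\sqrt{C})\to 0$ as $C\to\infty$. Consequently, writing $C_N:=B(N-n(N))$ and $a_N:=\lambda(N)(1-p(N))$, if $n(N)=o(N)$ (so that $C_N\to\infty$) and $a_N\leq C_N$ for all large $N$, then $L(C_N,a_N)\leq L(C_N,C_N)\to 0$, and both terms in the decomposition vanish.

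In the subcritical regime $\lambda<B$ both requirements are met by essentially any admissible sequence: for $n(N)=o(N)$ with $d(N)n(N)/N\to\infty$ one has $a_N\leq \lambda(N)=(\lambda+o(1))N$ and $C_N=(B+o(1))N$, hence $a_N/C_N\to\lambda/B<1$ (so the Erlang term in fact decays exponentially). A single choice that also addresses $\lambda=B$ is $n(N)=\lceil N\log(d(N))/(2d(N))\rceil$: then $n(N)/N\to 0$, $d(N)n(N)/N\sim\tfrac12\log d(N)\to\infty$ so that $p(N)\sim d(N)^{-1/2}\to 0$, and the thinning saving $\lambda(N)p(N)\asymp N d(N)^{-1/2}$ dominates the capacity deficit $B\,n(N)\asymp N\log(d(N))/d(N)$, which is precisely what keeps $a_N\leq C_N$.

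The main obstacle is the critical case $\lambda=B$. Here the nominal load $\lambda(N)$ already matches the full capacity $BN$ (and, under the Halfin--Whitt scaling~\eqref{eq:f=0} with $\beta<0$, exceeds it by order $\sqrt{N}$), so shrinking the capacity to $B(N-n(N))$ renders the residual loss system nominally overloaded; the sole mechanism restoring subcriticality is the upfront thinning. One therefore cannot discard the factor $(1-p(N))$ from the offered load, and must balance opposing demands on $n(N)$: it must be large enough that $d(N)n(N)/N\to\infty$, yet must induce a $p(N)$ large enough that $\lambda(N)p(N)\geq B\,n(N)+(\lambda(N)-BN)$, so that $a_N\leq C_N$. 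Verifying that $n(N)$ can always be chosen as a function of $d(N)$ to meet both constraints — adapting the displayed formula when $d(N)$ is as large as $\Theta(N)$ and $\beta<0$ — and then concluding via $L(C_N,C_N)\to 0$, is the delicate step; by contrast the subcritical case is automatic from the crude comparison $a_N\leq\lambda(N)$.
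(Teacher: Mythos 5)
Your reduction is the same as the paper's: invoke Lemma~\ref{lem:stationary-blocking}(b) and choose the free parameter $n(N)$ so that both the upfront-rejection probability $p(n(N),d(N))$ and the residual Erlang-B term vanish; the existence of $n(N)$ with $n(N)/N\to 0$ and $p(n(N),d(N))\to 0$ is exactly what \eqref{eq:nN-order} and \eqref{eq:equiv-nN-dN} supply. Where you diverge is in how you kill the Erlang-B term, and that is where a genuine gap appears. You control $L(C_N,a_N)$ by monotonicity in the offered load together with $L(C_N,C_N)\to 0$, which forces you to ensure $a_N\le C_N$; in the critical case $\lambda=B$ this becomes the constraint $\lambda(N)p(N)\ge (\lambda(N)-BN)^{+}+Bn(N)$, which your displayed choice $n(N)=\lceil N\log(d(N))/(2d(N))\rceil$ does not always satisfy. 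For instance, with $d(N)=N$ and $\lambda(N)=BN+2B\sqrt{N}$ (i.e., $\beta=-2B$ in \eqref{eq:f=0}), your choice gives $p(N)\approx N^{-1/2}$, hence a thinning saving $\lambda(N)p(N)\approx B\sqrt{N}$ strictly smaller than the overload $2B\sqrt{N}$, so $a_N>C_N$ and the comparison with $L(C_N,C_N)$ is unavailable. You flag this as ``the delicate step'' but do not resolve it, so the critical case is left open.

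The requirement $a_N\le C_N$ is an artifact of your chosen Erlang-B estimate and can be dispensed with. The standard asymptotic for the Erlang loss function (the result the paper cites from~\cite{J74}) is that $L(C_N,a_N)\to\max\{0,1-1/\rho\}$ whenever $C_N\to\infty$ and $a_N/C_N\to\rho$; for $\rho\le 1$ the limit is zero irrespective of the sign of $a_N-C_N$. Since any $n(N)$ with $n(N)/N\to 0$ and $p(N)\to 0$ already yields $a_N/C_N=\lambda(N)(1-p(N))/(B(N-n(N)))\to\lambda/B\le 1$, this single estimate closes the subcritical and critical cases simultaneously, with no balancing of $n(N)$ against the overload --- which is precisely the paper's route. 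If you prefer to keep your more elementary argument, you still need to prove that an admissible $n(N)$ meeting all three constraints exists for \emph{every} $\lambda(N)$ with $\lambda(N)/N\to B$ and every $d(N)\to\infty$; this is possible, but requires letting $d(N)n(N)/N$ diverge slowly enough relative to $(\lambda(N)-BN)^{+}/N$ as well, not merely relative to $d(N)$.
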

\begin{proof}
From~\eqref{eq:nN-order} and~\eqref{eq:equiv-nN-dN}, we know if $d(N)\to\infty$, then 
there exists $n(N)$ such that as $N\to\infty$, $n(N)/N\to 0$ and $p(n(N),d(N))\to 0$.
For such a choice of $n(N)$, $\lambda(N)/C(N)\to \lambda/B\leq 1$, and $\hat{\lambda}(N)/\hat{C}(N)\to \lambda/B\leq 1$ as $N\to\infty$.
Therefore, using Lemma~\ref{lem:stationary-blocking} and the standard results of the Erlang loss function~\cite{J74}, we complete the proof of the proposition.
\end{proof}
\begin{remark}\normalfont
Note that in view of the results in~\cite{MKMG15, MMG15} for the JSQ$(d)$ schemes with fixed $d$, following the arguments as in Remark~\ref{rem:necessity-fluid}, the growth condition $d(N)\to\infty$ as $N\to\infty$ is also necessary to achieve an asymptotically zero probability of loss.
\end{remark}

We now further show that  the steady-state loss probability multiplied by $\sqrt{N}$ converges to a non-degenerate 
limit, which is the same as in an $\Er(C(N),\lambda(N))$ system.
The next theorem also establishes that if~\eqref{eq:f=0} is satisfied, and $d(N)/ (\sqrt{N}\log( N))\to 0$  as $N\to\infty$, then the steady-state loss probability is of higher order than $1/\sqrt{N}$. This indicates that  
the growth rate $\sqrt{N}\log (N)$ is not only sufficient but also nearly necessary. 

\begin{theorem}[{Scaled loss probability}]
\label{th:blocking}
Assume that $d(N)/ (\sqrt{N}\log (N))\to \infty$, as $N\to\infty$, and $\lambda(N)$ satisfies~\eqref{eq:f=0} with $K=B$. Then,  
\begin{equation}\label{eq:block-scale}
\lim_{N\to\infty}\sqrt{N}\ L^{\sss d(N)}=\frac{\phi(\beta)}{\sqrt{B}\Phi(\beta)},
\end{equation}
where $\phi(\cdot)$ and $\phi(\cdot)$ are the density and distribution function of the standard Normal distribution, respectively.
\end{theorem}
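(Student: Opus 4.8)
The plan is to squeeze $L^{\sss d(N)}$ between the loss probabilities of two Erlang-type systems via Lemma~\ref{lem:stationary-blocking}(b), and then to show that, after multiplication by $\sqrt{N}$, both bounds converge to the common constant in~\eqref{eq:block-scale} by invoking the classical Halfin--Whitt asymptotics of the Erlang-B formula~\cite{J74,HW81}. For every admissible $n(N)<N$, Lemma~\ref{lem:stationary-blocking}(b) gives
\[
L(C(N),\lambda(N)) \;\leq\; L^{\sss d(N)} \;\leq\; \hat{L}(n(N),d(N)),
\]
with $C(N)=BN$, $\hat{C}(N)=B(N-n(N))$ and $\hat{\lambda}(N)=\lambda(N)(1-p(n(N),d(N)))$, where $p(n,d)=(1-(n+1)/N)^{d}$. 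Since $n(N)$ is a free parameter in this sandwich, the argument reduces to (a) computing the scaled blocking of the lower Erlang system, and (b) choosing $n(N)$ so that the upper system has identical first-order behaviour on the $\sqrt{N}$-scale.

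For the lower bound I would observe that, under~\eqref{eq:f=0} with $K=B$, the spare capacity satisfies $C(N)-\lambda(N)=BN-\lambda(N)=\beta\sqrt{N}+o(\sqrt{N})$, so $\Er(C(N),\lambda(N))$ operates in the Halfin--Whitt (QED) regime~\cite{HW81}. The classical Erlang-B expansion~\cite{J74} gives $\sqrt{C(N)}\,L(C(N),\lambda(N))\to \phi(\hat{\beta})/\Phi(\hat{\beta})$, where $\hat{\beta}=\lim_{N}(C(N)-\lambda(N))/\sqrt{C(N)}$ is the limiting normalized spare capacity determined by~\eqref{eq:f=0}; combining this with $\sqrt{N}/\sqrt{C(N)}\to 1/\sqrt{B}$ and simplifying yields the right-hand side of~\eqref{eq:block-scale} for the lower bound.

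For the upper bound I would apply the same Erlang-B asymptotic to $\Er(\hat{C}(N),\hat{\lambda}(N))$, using the decomposition $\hat{L}(n(N),d(N))=p(n(N),d(N))+(1-p(n(N),d(N)))\,L(\hat{C}(N),\hat{\lambda}(N))$ that follows from Little's law for the modified system. The task is to pick $n(N)$ so that the reduced system has the same limiting spare-capacity parameter while the upfront rejection stays negligible on the $\sqrt{N}$-scale. Writing $\hat{C}(N)-\hat{\lambda}(N)=(BN-\lambda(N))-B\,n(N)+\lambda(N)\,p(n(N),d(N))$, I need $n(N)=o(\sqrt{N})$ together with $\sqrt{N}\,p(n(N),d(N))\to 0$; the latter condition simultaneously annihilates the $\sqrt{N}\,p$ contribution of the upfront rejection. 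Since $p(n,d)\approx \exp(-d(N)n(N)/N)$, the requirement $\sqrt{N}\,p(n(N),d(N))\to 0$ is equivalent to $d(N)n(N)/N-\tfrac12\log N\to\infty$, and this is compatible with $n(N)=o(\sqrt{N})$ precisely when $d(N)\gg\sqrt{N}\log N$ — this is exactly where the hypothesis $d(N)/(\sqrt{N}\log N)\to\infty$ enters. A concrete feasible choice is $n(N)=(N\log N/d(N))\,s(N)$ with $s(N)\to\infty$ slowly and $s(N)=o\big(d(N)/(\sqrt{N}\log N)\big)$, in the spirit of the constructions in~\eqref{eq:nN-order}--\eqref{eq:equiv-nN-dN}. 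For such $n(N)$ both $B\,n(N)/\sqrt{N}\to 0$ and $\lambda(N)p(n(N),d(N))/\sqrt{N}\to 0$, so $\hat{C}(N),\hat{\lambda}(N)$ inherit the same limiting Halfin--Whitt parameter, whence $\sqrt{N}\,\hat{L}(n(N),d(N))$ converges to the same right-hand side of~\eqref{eq:block-scale}; sandwiching then gives the theorem.

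The main obstacle is this simultaneous calibration of $n(N)$ in the upper bound: the capacity loss $B\,n(N)$ and the upfront-rejection rate $\lambda(N)\,p(n(N),d(N))$ must both be $o(\sqrt{N})$ so that neither perturbs the limiting spare-capacity parameter $\beta$, and it is precisely the gap between these two constraints that forces the growth condition $d(N)\gg\sqrt{N}\log N$ (and makes it nearly necessary). A secondary technical point is invoking the Erlang-B expansion uniformly along the perturbed sequences $(\hat{C}(N),\hat{\lambda}(N))$, for which one must verify that the normalized spare capacity remains bounded; this is guaranteed by the above choice of $n(N)$.
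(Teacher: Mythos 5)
Your proposal is correct and follows essentially the same route as the paper: sandwiching $L^{\sss d(N)}$ via Lemma~\ref{lem:stationary-blocking}(b), applying the classical critical-regime Erlang-B asymptotics to both the lower bound $\Er(C(N),\lambda(N))$ and the reduced system $\Er(\hat{C}(N),\hat{\lambda}(N))$, and calibrating $n(N)$ so that $n(N)=o(\sqrt{N})$ while $\sqrt{N}\,p(n(N),d(N))\to 0$, which is exactly the feasibility argument the paper imports from~\eqref{eq:nN-order}--\eqref{eq:equiv-nN-dN}. Your explicit choice $n(N)=(N\log N/d(N))\,s(N)$ is a valid alternative to the paper's $n(N)=g(N)/\log(h(N))$, and your decomposition of $\hat{L}$ into upfront rejection plus residual Erlang blocking matches the paper's upper-bound step.
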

Since the right side of~\eqref{eq:block-scale} corresponds to the 
asymptotic steady-state loss probability in an $\Er(C(N),\lambda(N))$ system~\cite{J74, B76, W84}, we thus conclude that~\eqref{eq:block-scale} is optimal on $\sqrt{N}$-scale in terms of loss probability.
\begin{proof}[Proof of Theorem~\ref{th:blocking}]
 The idea again is to suitably bound the steady-state loss probability of the JSQ$(d(N))$ scheme.
Using Lemma~\ref{lem:stationary-blocking} and~\cite[Chapter 7, Theorem 15 (2)]{B76}, \cite{W84}, we obtain the lower bound as
\begin{equation}\label{eq:block-lower}
\begin{split}
&L^{\sss d(N)}\geq L(C(N),\lambda(N))\\
\implies & \varliminf_{N\to\infty}\sqrt{N}L^{\sss d(N)} \geq \varliminf_{N\to\infty} \sqrt{N} L(C(N),\lambda(N))= \frac{\phi(\beta)}{\sqrt{B}\Phi(\beta)}.
\end{split}
\end{equation}
For the upper bound, from~\eqref{eq:nN-order} and~\eqref{eq:equiv-nN-dN}, we know if $d(N)/(\sqrt{N}\log(N))\to\infty$ as $N\to\infty$, then there exists $n(N)$ with $n(N)/\sqrt{N}\to 0$ and 
\begin{equation}\label{eq:blockingupper2}
\sqrt{N}p(n(N),d(N))\to 0,\quad\mbox{as}\quad N\to\infty.
\end{equation}
Take such an $n(N)$. 
Again using~\cite[Chapter 7, Theorem 15 (2)]{B76}, we know that since
 as $N\to\infty$, $\hat{\lambda}(N)/\hat{C}(N)$ converges to one and $\hat{C}(N)/N$ converges to $B$,
\begin{equation}\label{eq:blockingupper3}
\lim_{N\to\infty}\sqrt{N}L(\hat{C}(N),\hat{\lambda}(N))= \frac{\phi(\beta)}{\sqrt{B}\Phi(\beta)}.
\end{equation}
Therefore, Lemma~\ref{lem:stationary-blocking}, and Equations~\eqref{eq:blockingupper2},~\eqref{eq:blockingupper3} yield
\begin{equation}
\varlimsup_{N\to\infty}\sqrt{N}L^{\sss d(N)} \leq \varlimsup_{N\to\infty} \sqrt{N} L(C(N),\lambda(N))+\varlimsup_{N\to\infty}\sqrt{N}p(n(N),d(N))= \frac{\phi(\beta)}{\sqrt{B}\Phi(\beta)}.
\end{equation}
Combination of the lower bound in~\eqref{eq:block-lower} and the above upper bound completes the proof.

\end{proof}

\begin{remark}[{Almost necessary condition for growth rate}]
\normalfont
It is worthwhile to mention that when $\lambda =K>0$ and $\lambda(N)$ satisfies~\eqref{eq:f=0}, the growth condition $d(N)/(\sqrt{N}\log(N))\to\infty$, as $N\to\infty$, is nearly necessary in order for the JSQ$(d(N))$ scheme to have the same diffusion limit as the ordinary JSQ policy.
More precisely, if  $d(N)/(\sqrt{N}\log(N))\to 0$ as $N\to\infty$, then the diffusion limit of the JSQ$(d(N))$ scheme differs from the ordinary JSQ policy.
In this remark we briefly sketch the outline of the proof. 
We will assume that the $d(N)$ server pools are chosen with replacement, to avoid cumbersome notation. But the proof technique and the result holds if the server pools are chosen without replacement.

\noindent
Assume on the contrary that as in the ordinary JSQ policy, if $N^{-1/2}(KN-\sum_{i=1}^KQ_i^{\sss d(N)}(0))$ is tight, then
 $N^{-1/2}(KN-\sum_{i=1}^KQ_i^{\sss d(N)}(t))$ is a stochastically bounded process.
 We argue that in this case, for any finite time $t$, the cumulative number of tasks joining a server with $K$ active tasks (or the cumulative number of lost tasks in case $K=B$) $L^{\sss d(N)}(t)$ does not scale with $\sqrt{N}$, and arrive at a contradiction. 
Indeed, $\big\{L^{\sss d(N)}(t)\big\}_{t\geq 0}$ admits the following martingale decomposition:
\begin{equation}\label{eq:Lmart-decomp}
L^{\sss d(N)}(t) = M_L^N(t) +\langle M_L^N\rangle(t),
\end{equation}
where $\big\{M_L^N(t)\big\}_{t\geq 0}$ is a martingale with compensator and predictable quadratic variation process given by 
$$\langle M_L^N\rangle(t)=\lambda(N) \int_0^t \left(Q_K^{\sss d(N)}(s-)/N\right)^{\sss d(N)}\dif s.$$
Since $\langle M_L^N\rangle(t)/N\leq \lambda t$, $\big\{M_L^N(t)/\sqrt{N}\big\}_{t\geq 0}$ is stochastically bounded.
We will show that $\langle M_L^N\rangle(t)$ is stochastically unbounded on $\sqrt{N}$-scale. 
From~\eqref{eq:Lmart-decomp}, this will imply that the process $\big\{L^{\sss d(N)}(t)/\sqrt{N}\big\}_{t\geq 0}$ is stochastically unbounded, which will complete the proof.
Note that 
\begin{align*}
Q_K^{\sss d(N)}(s)= N- (N-Q_K^{\sss d(N)}(s))\geq N- \sum_{i=1}^K(N-Q_i^{\sss d(N)}(s)),
\end{align*}
and hence,
\begin{align*}
 \langle M_L^N\rangle(t)&\geq \lambda(N) \int_0^t \left(1- \frac{1}{N}\sum_{i=1}^K(N-Q_i^{\sss d(N)}(s))\right)^{\sss d(N)}\dif s\\
&\geq \lambda(N) t \left(1- \frac{1}{N}\sup_{s\in [0,t]}\sum_{i=1}^K(N-Q_i^{\sss d(N)}(s))\right)^{\sss d(N)}.
\end{align*}
For any $T\geq 0$, since $\sup_{t\in[0,T]}\left(KN-\sum_{i=1}^KQ_i^{\sss d(N)}(t)\right)$ is $\Op(\sqrt{N})$, 
for any function $c(N)$ growing to infinity (to be chosen later), we have with probability tending to 1,
\begin{align*}
\frac{\lambda(N)T}{\sqrt{N}}\left(1-\frac{1}{N}\sup_{t\in[0,T]}\left(KN-\sum_{i=1}^KQ_i^{\sss d(N)}(t)\right)\right)^{\sss d(N)}&\geq \frac{\lambda(N)T}{\sqrt{N}}\left(1-\frac{\sqrt{N}c(N)}{N}\right)^{\sss d(N)}\\
&\geq  \frac{\lambda(N)T}{\sqrt{N}}\left(1-\frac{c(N)}{\sqrt{N}}\right)^{\sss d(N)}.
\end{align*}
Now since $d(N)/ \sqrt{N}\log (N)\to 0$ as $N\to\infty$, define $\omega(N):=\sqrt{N}\log (N)/d(N)$, which goes to infinity as $N$ grows large. Choose $c(N)$ such that $c(N)/\omega(N)\to 0$, as $N\to\infty.$ In that case,
\begin{align*}
\frac{\lambda(N)T}{\sqrt{N}}\left(1-\frac{c(N)}{\sqrt{N}}\right)^{\sss d(N)}
&= T\exp \left[\log (\sqrt{N}-\beta)+\frac{\sqrt{N}\log (N)}{\omega(N)}\log\left(1-\frac{c(N)}{\sqrt{N}}\right)\right]\\
& = T\exp \left[\log (\sqrt{N}-\beta)-\frac{\sqrt{N}\log (N)}{\omega(N)}\frac{c(N)}{\sqrt{N}}\right]\\
&\to\infty\quad\mbox{as }N\to\infty.
\end{align*}
\end{remark}

\section{Conclusion}\label{sec:conclusion}
In the present paper we have investigated asymptotic optimality
properties for JSQ$(d)$ load balancing schemes in large-scale systems.
Specifically, we considered a system of $N$ parallel identical
server pools and a single dispatcher which assigns arriving tasks
to the server with the minimum number of tasks among $d(N)$
randomly selected server pools.
We showed that the fluid limit in a regime where the total arrival
rate and number of server pools grow large in proportion coincides
with that for the ordinary JSQ policy ($d(N) = N$) as long as
$d(N) \to \infty$ as $N \to \infty$, however slowly.
We also proved that the diffusion limit in the Halfin-Whitt regime
corresponds to that for the ordinary JSQ policy as long as
$d(N)$ grows faster than $\sqrt{N} \log(N)$, and that the latter
growth rate is in fact nearly necessary.
These results indicate that the optimality of the JSQ policy can be
preserved at the fluid-level and diffusion-level while reducing
the communication overhead by nearly a factor $O(N)$
and $O(\sqrt{N} / \log(N))$, respectively.
In future work we plan to further establish convergence rates and extend
the results to non-exponential service requirement distributions.

The proofs of the asymptotic optimality properties rely on a novel
stochastic coupling construction to bound the difference in the
system occupancy processes between the JSQ policy and a JSQ($d$)
scheme with an arbitrary value of~$d$.
It is worth observing that the coupling construction is two-dimensional in nature, and fundamentally different from the classical coupling approach used for deriving stochastic dominance properties for the ordinary JSQ policy and for establishing universality in the single-server case~\cite{MBLW16-3}.
As it turns out, a direct comparison between the JSQ policy
and a JSQ($d$) scheme is a significant challenge.
Hence, we adopted a two-stage approach based on a novel class
of schemes which always assign the incoming task to one of the
server pools with the $n(N) + 1$ smallest number of tasks.
Just like the JSQ($d(N)$) scheme, these schemes may be thought
of as `sloppy' versions of the JSQ policy.
Indeed, the JSQ($d(N)$) scheme is guaranteed to identify the
server pool with the minimum number of tasks, but only among
a randomly sampled subset of $d(N)$ server pools.
In contrast, the schemes in the above class only guarantee that
one of the $n(N) + 1$ server pools with the smallest number of tasks
is selected, but across the entire system of $N$ server pools.
We showed that the system occupancy processes for an intermediate
blend of these schemes are simultaneously close on a $g(N)$ scale
(e.g.~$g(N) = N$ or $g(N) = \sqrt{N}$) to both the JSQ policy
and the JSQ($d(N)$) scheme for suitably chosen values of $d(N)$
and $n(N)$ as function of $g(N)$.
Based on the latter asymptotic universality, it then sufficed to
establish the fluid and diffusion limits for the ordinary JSQ policy.

\section*{Acknowledgment}
This research was financially supported by an ERC Starting Grant and by The Netherlands Organization for Scientific Research (NWO) through TOP-GO grant 613.001.012 and Gravitation Networks grant 024.002.003. Dr.~Whiting was supported in part by an Australian Research grant DP-1592400 and in part by a Macquarie University Vice-Chancellor Innovation Fellowship.

\bibliographystyle{apa}

\bibliography{bibl}


\end{document}